\pgfplotsset{compat=1.10}
\newtheorem{proposition}{Proposition}[section]
\newtheorem{theorem}[proposition]{Theorem}
\newtheorem{corollary}[proposition]{Corollary}
\newtheorem{lemma}[proposition]{Lemma}
\theoremstyle{definition}
\theoremstyle{remark}
\newtheorem{remark}[proposition]{Remark}
\numberwithin{equation}{section}
\newcommand{\eps}{\varepsilon}
\newcommand{\N}{{\mathbb{N}}}
\newcommand{\R}{{\mathbb{R}}}
\newcommand{\loc}{{\mathrm{loc}}}
\DeclareMathOperator{\dist}{dist}
\DeclareMathOperator{\diverg}{div}
\newcommand{\ind}[1]{\chi_{#1}}
\title{Regularity for one-phase Bernoulli problems with discontinuous weights and applications}
\author{Lorenzo Ferreri, Bozhidar Velichkov}
\begin{document}
\maketitle

We study a one-phase Bernoulli free boundary problem with weight function admitting a discontinuity along a smooth jump interface. In any dimension $N\ge 2$, we show the $C^{1, \alpha}$ regularity of the free boundary outside of a singular set of Hausdorff dimension at most $N-3$. In particular, we prove that the free boundaries are  $C^{1, \alpha}$ regular in dimension $N=2$, while in dimension $N=3$ the singular set can contain at most a finite number of points. We use this result to construct singular free boundaries in dimension $N=2$, which are minimizing for one-phase functionals with weight functions in $L^\infty$ that are arbitrarily close to a positive constant.\\%

\noindent
{\footnotesize \textbf{AMS-Subject Classification} 35R35}. 
\\
{\footnotesize \textbf{Keywords} regularity of free boundaries, one-phase Bernoulli problem, Alt-Caffarelli, viscosity solutions, epsilon-regularity, improvement of flatness, discontinuous weight}.

\tableofcontents

\section{Introduction}\label{section:introduction}
The aim of this paper is to study the regularity of the local minimizers of the one-phase Alt-Caffarelli functional
\begin{equation}\label{e:definition-J-Q-intro}
J_Q(u)=\int_{B_1}|\nabla u|^2+Q(x)1_{\{u>0\}}\,dx\,,
\end{equation}
with a weight function $Q:\R^N\to\R$ which is merely bounded from above and below by positive constants. Precisely, we consider weight functions of the form
\begin{equation}\label{e:definition-Q-intro}
Q(x)=Q_1(x)1_{E}+Q_2(x)1_{\R^N\setminus E}\,,
\end{equation}
where $Q_1$ and $Q_2$ are H\"older continuous functions on $\R^N$ and the the jump set $\partial E$ is a $C^{1,\alpha}$ manifold. As usual, given a nonnegative function $u:B_1\to\R$, $u\in H^1(B_1)$, we will say that $u$ is a minimizer of $J_Q$ in $B_1$ if
\begin{equation}\label{e:minimality-J-with-jump}
J_Q(u)\le J_Q(v)\quad\text{for every}\quad v\in H^1(B_1)\quad\text{such that}\quad u-v\in H^1_0(B_1).
\end{equation}
Our main theorems are the following
\begin{theorem}[Epsilon-regularity]\label{t:main-main}
Let $N\ge 2$ and let $Q,Q_1,Q_2,E$ and $\alpha$ be as above. Suppose that 
$$1-\delta\le Q_1(x)\le 1\quad\text{and}\quad 2\le Q_2(x)\le 2+\delta\quad\text{in}\quad B_1,$$
for some $\delta\in(0,1/2)$.
Let $u:B_1\to\R$ be a non-negative minimizer of $J_Q$ in $B_1$ in the sense of \eqref{e:minimality-J-with-jump}. Then, there is $\eps>0$ depending on $N$, $Q_1$, $Q_2$, $m$ and $E$, such that the following holds:

If $0\in\partial\{u>0\} \cap \partial E \cap B_1$ and $u$ is $\eps$-flat in $B_1$ in the sense that
\begin{equation}\label{e:definition-eps-flat}
\gamma (x\cdot\nu-\eps)^+\le u(x)\le \gamma (x\cdot\nu+\eps)^+\quad\text{for every}\quad x\in B_1,
\end{equation}
where $\gamma\in[1-\delta,2+\delta]$ and $\nu$ is the interior normal to $\partial E$, then in a neighborhood of $0$ the free boundary $\partial \{u>0\}$ is $C^{1,\beta}$ regular manifold, for some $\beta>0$. Moreover,  
$\partial \{u>0\}$ and $\partial E$ have the same tangent plane at $x_0$. 
\end{theorem}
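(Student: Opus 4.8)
The plan is to prove the statement through the improvement-of-flatness scheme of De Silva, adapted to the jump interface $\partial E$. First I would flatten $\partial E$ near $0$ by a $C^{1,\alpha}$ change of variables, reducing to the case $E=\{x_N>0\}$, $\nu=e_N$, at the price of replacing $|\nabla u|^2$ by a divergence-form quadratic form $\langle A(x)\nabla u,\nabla u\rangle$ with $A$ Hölder continuous, uniformly elliptic and $A(0)=\mathrm{Id}$, the weight becoming $Q=Q_1\,1_{\{x_N>0\}}+Q_2\,1_{\{x_N<0\}}$. From \eqref{e:minimality-J-with-jump} one has the usual a priori facts for this anisotropic one-phase problem — local Lipschitz continuity and non-degeneracy of $u$, two-sided density estimates for $\{u>0\}$ along $\partial\{u>0\}$, and $A$-harmonicity of $u$ in $\{u>0\}$ — so that $u$ satisfies the (anisotropic) one-phase free boundary condition with weight $Q$ in the viscosity sense on $\partial\{u>0\}\cap\{x_N\ne 0\}$, degenerating to a pair of one-sided inequalities, the boundary gradient being sandwiched between the levels prescribed by $Q_1$ and $Q_2$, on the contact set $\partial\{u>0\}\cap\partial E$.

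The first structural point is that the free boundary cannot cross $\partial E$ near $0$. Indeed, $\eps$-flatness gives $\partial\{u>0\}\subset\{|x_N|<\eps\}$ and $\{u>0\}\supset\{x_N>\eps\}$; if $\partial\{u>0\}$ had smooth pieces in $\{x_N>0\}$ and in $\{x_N<0\}$ both accumulating at $0$, the boundary gradient would be $\approx\sqrt{Q_1(0)}$ on the first and $\approx\sqrt{Q_2(0)}$ on the second, violating the continuity of $|\nabla u|$ up to $\partial\{u>0\}$ furnished by boundary Schauder estimates for the $A$-harmonic function $u$. Hence, near $0$, $\partial\{u>0\}$ lies on one side of $\partial E$ and is tangent to it there. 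An energy comparison — pushing $\{u>0\}$ up to, or away from, $\partial E$ and weighing the change in Dirichlet energy against $\int Q$ — then yields the dichotomy: when $\gamma^2$ lies strictly between $Q_1(0)$ and $Q_2(0)$ (the interval $[\sqrt{Q_1(0)},\sqrt{Q_2(0)}]$ of admissible slopes being contained in the range $[1-\delta,2+\delta]$ of the statement), $\partial\{u>0\}$ coincides with $\partial E$ in a neighbourhood of $0$ and the conclusion follows at once from the $C^{1,\alpha}$ regularity of $\partial E$; in the borderline cases $\gamma^2=Q_i(0)$ the free boundary is a genuine one-phase free boundary sitting on the side of $\partial E$ where the weight is the continuous function $Q_i$, touching $\partial E$ tangentially along a contact set contained in $\{x_N=0\}$.

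This last, degenerate, configuration is the heart of the argument, and I would treat it by compactness. Assuming the statement fails, take minimizers $u_k$ that are $\eps_k$-flat in the direction $e_N$ with $\eps_k\to 0$ but whose free boundaries are not, in a fixed ball $B_{\rho_0}$, $\eps_k/2$-flat in any direction, and rescale the deviations
\begin{equation*}
\tilde u_k(x)=\frac{u_k(x)-\gamma(x\cdot e_N)^+}{\gamma\,\eps_k}.
\end{equation*}
A partial Harnack inequality up to the free boundary, valid uniformly in $k$, then gives $C^{0,\sigma}$ compactness of $\{\tilde u_k\}$, and the limit $\tilde u_\infty$ is harmonic in $\{x_N>0\}$ and satisfies, on $\{x_N=0\}$, the linearization of the overdetermined system (interior harmonicity, free boundary condition, obstacle $\partial E$): a Signorini-type thin-obstacle problem, namely a one-sided sign condition together with complementarity between $\tilde u_\infty$ and its normal derivative. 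The $C^{1,1/2}$ regularity of such $\tilde u_\infty$, together with the flatness of its nodal set, contradicts the non-improvement, and a standard iteration of the resulting flatness decay produces a $C^{1,\beta}$ graph for $\partial\{u>0\}$ near $0$ with tangent plane $\{x_N=0\}$, which after undoing the flattening is the tangent plane of $\partial E$ at $0$. I expect the two genuinely non-classical ingredients to be the main obstacles: the partial Harnack inequality across $\partial E$, where the jump of $Q$ must be absorbed and the two regions $\{u>0\}\cap\{x_N>0\}$ and $\{u>0\}\cap\{x_N<0\}$ handled simultaneously; and the identification and exploitation of the Signorini-type limit problem — in particular ruling out that its contact set carries the obstruction to flatness — everything being carried out with the Hölder coefficients $A$ coming from the flattening.
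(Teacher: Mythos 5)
Your overall architecture matches the paper's: flatten $\partial E$ to $\{x_N=0\}$, prove that near a flat point the free boundary cannot cross $\{x_N=0\}$ transversally but instead exhibits a trichotomy depending on $\gamma$, and then reduce the two degenerate cases to a one-phase problem with an obstacle whose linearization is a Signorini (thin-obstacle) problem. This is indeed the paper's plan, with the trichotomy in Lemma~\ref{lemma:ApplQDiscSolFlatAtContact} and the hard degenerate case settled by Theorem~\ref{thm:NonSharpRegularity} (and the other degenerate case by citation to Chang-Lara--Savin).

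The genuine gap is in your argument for the trichotomy. Your first structural point --- that the free boundary cannot have pieces on both sides of $\partial E$ accumulating at $0$ because $|\nabla u|$ is continuous up to $\partial\{u>0\}$ by boundary Schauder --- is circular: boundary Schauder estimates for the $A$-harmonic function $u$ up to $\partial\{u>0\}$ require a priori that $\partial\{u>0\}$ enjoys some regularity near $0$, which is exactly the conclusion. A priori the free boundary could be a wild set accumulating at $0$ from both sides, and there is no pointwise gradient continuity to invoke. Your fallback ``energy comparison --- pushing $\{u>0\}$ up to, or away from, $\partial E$'' is also unspecified: a first-variation argument yields the free boundary condition at smooth points but does not force the positivity set to collapse onto a half-space at a fixed scale. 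What the paper actually does (Lemma~\ref{lemma:solIsFlatAtContact}, then Lemma~\ref{lemma:ApplQDiscSolFlatAtContact}) is a sliding-barrier argument at scale $1/2$: families $w_t$ of strict comparison super- (resp.\ sub-) solutions, moved continuously in $t$, are shown never to touch $u$ along the free boundary because the prescribed slope $\gamma$ is incompatible with the viscosity free boundary condition $|D_au|=Q_i$; this forces $\overline{\{u>0\}}\cap\overline{B_{1/2}}$ to collapse onto, or to strictly contain, $\{x_N\ge0\}\cap\overline{B_{1/2}}$. This barrier argument is the replacement for your circular gradient-continuity step and cannot be avoided. A second, smaller, omission is that your compactness/Signorini scheme treats the two borderline cases $\gamma^2=Q_1(0)$ and $\gamma^2=Q_2(0)$ as if they were symmetric; they are not --- in the first the positivity set is contained in $\{x_N\ge0\}$ (the setting of \cite{ChangLaraSavin:BoundaryRegularityOnePhase}), while in the second it contains $\{x_N\ge0\}$, which is the new constrained problem \eqref{eqn:MainPb} occupying most of the paper --- and the partial Harnack, the improvement of flatness, and the identification of the Signorini limit all have to be redone for the second configuration rather than transferred.
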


\begin{theorem}[Regularity of the free boundary]\label{t:main-main-2}
There is a universal constant $N^{\ast\ast}\ge 3$ such that the following holds. Let $Q:\R^N\to\R$ be of the form \eqref{e:definition-Q-intro} with jump set $\partial E$ is a $C^{1,\alpha}$ manifold and weight functions $Q_1,Q_2\in C^{0,\alpha}(\R^N)$ which are bounded from above and from below by positive constants. Then, for every non-negative minimizer $u$ of $J_Q$ in $B_1$, we have: 
\begin{enumerate}
\item[\rm(1)] If $N<N^{\ast\ast}$, then the whole free boundary $\partial\{u>0\}\cap B_1$ is $C^{1,\beta}$ regular for some $\beta>0$. 
\item[\rm(2)]  If $N=N^{\ast\ast}$, then there is a locally finite set $\textrm{Sing}(u)\subset \partial\{u>0\}$ such that $\partial\{u>0\}\setminus \textrm{Sing}(u)$ is a $C^{1,\beta}$ regular manifold for some $\beta>0$. 
\item[\rm(3)] if $N>N^{\ast\ast}$, then the free boundary $\partial\{u>0\}\cap B_1$ can be decomposed as the disjoint union
$$\partial\{u>0\}\cap B_1=\textrm{Sing}(u)\cup \textrm{Reg}(u)\,,$$
where $\textrm{Sing}(u)$ is a closed set of Hausdorff dimension at most $N-N^{\ast\ast}$, while $\textrm{Reg}(u)$ is a $C^{1,\beta}$ regular manifold for some $\beta>0$.
\end{enumerate}
\end{theorem}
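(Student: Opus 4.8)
The plan is to deduce Theorem~\ref{t:main-main-2} from the $\eps$-regularity statement of Theorem~\ref{t:main-main} by the standard blow-up/dimension-reduction machinery of Alt--Caffarelli and Weiss, adapted to the present setting with a jump interface. The first observation is that the free boundary $\partial\{u>0\}$ naturally splits into two pieces: the part $\partial\{u>0\}\cap B_1\setminus\partial E$ lying off the jump interface, and the part $\partial\{u>0\}\cap\partial E$ sitting on it. On the first piece, in a neighborhood of any such point the weight $Q$ coincides with a single Hölder continuous function ($Q_1$ or $Q_2$), so $u$ is locally a minimizer of the classical Alt--Caffarelli functional with continuous weight, and the known regularity theory applies verbatim: the free boundary is $C^{1,\beta}$ away from a closed singular set of Hausdorff dimension at most $N-N^\ast$, where $N^\ast$ is the critical dimension for the one-phase problem (the global minimality of the Alt--Caffarelli cone). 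So the new content is entirely concentrated on $\partial\{u>0\}\cap\partial E$.

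\medskip

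\textbf{Analysis on the jump interface.} For points $x_0\in\partial\{u>0\}\cap\partial E$, I would first establish the basic compactness ingredients: non-degeneracy of $u$ (from below) and Lipschitz continuity (from above), so that blow-up sequences $u_{x_0,r}(x)=u(x_0+rx)/r$ are pre-compact in $C^{0,\gamma}_{\loc}$ and converge (along subsequences) to a blow-up limit $u_0$ which is a global minimizer of a \emph{two-valued} weight problem with a \emph{flat} interface: the rescaled interface $\partial E$ converges, since $\partial E\in C^{1,\alpha}$, to a hyperplane $\Pi$ through the origin, and the rescaled weights converge to the two constants $Q_1(x_0)$ on one side and $Q_2(x_0)$ on the other. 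Crucially the hypothesis of Theorem~\ref{t:main-main} — that $Q_1\le 1$ and $Q_2\ge 2$, i.e.\ there is a genuine jump up by a factor bounded away from $1$ — should force (via a Weiss-type monotonicity formula and classification of one-homogeneous global minimizers with a flat two-valued weight) that the only blow-up limit is the half-plane solution $u_0(x)=\sqrt{Q_1(x_0)}\,(x\cdot\nu)^+$ aligned with the interface $\Pi$, with $\nu$ the normal to $\Pi$ pointing into $\{Q=Q_1\}$. The point is that a free boundary that genuinely crosses into the region where $Q$ is larger is energetically penalized, so the positivity set must sit on the low-weight side with boundary exactly along $\Pi$ — there are no conical singular blow-ups on the interface. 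Once this classification is in hand, $\eps$-flatness \eqref{e:definition-eps-flat} holds at every sufficiently small scale around $x_0$, and Theorem~\ref{t:main-main} upgrades this to $C^{1,\beta}$ regularity of $\partial\{u>0\}$ near $x_0$, with the tangent plane equal to $\Pi=T_{x_0}\partial E$. In particular $\partial\{u>0\}\cap\partial E$ is, near such points, a relatively open $C^{1,\beta}$ piece of $\partial E$.

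\medskip

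\textbf{Putting the pieces together and dimension counting.} With both pieces understood, define $\mathrm{Sing}(u)$ as the set of free boundary points where no blow-up is a half-plane. By the above, $\mathrm{Sing}(u)$ is contained in $\partial\{u>0\}\setminus\partial E$ (no interface point is singular) and there it coincides with the classical one-phase singular set. Hence a Federer-type dimension-reduction argument gives $\dim_{\Hcal}\mathrm{Sing}(u)\le N-N^{\ast\ast}$ with $N^{\ast\ast}:=N^\ast$ the critical dimension of the classical one-phase Bernoulli problem (so $N^{\ast\ast}=7$ with the current best results, but we only need $N^{\ast\ast}\ge 3$); when $N<N^{\ast\ast}$ the singular set is empty, when $N=N^{\ast\ast}$ it is discrete (hence locally finite, again by compactness of blow-ups), and when $N>N^{\ast\ast}$ it has the stated Hausdorff dimension. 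Openness of $\mathrm{Reg}(u)$ and closedness of $\mathrm{Sing}(u)$ follow from the stability of the $\eps$-flatness condition under the topology in which blow-ups converge.

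\medskip

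\textbf{Main obstacle.} The delicate step is the classification of blow-up limits at interface points, i.e.\ showing that the $\eps$-flatness hypothesis of Theorem~\ref{t:main-main} is automatically verified at small scales on $\partial\{u>0\}\cap\partial E$ and that no singular configuration persists there. This requires a Weiss monotonicity formula compatible with the discontinuous (but, after blow-up, piecewise-constant with flat interface) weight, together with a rigidity/classification of one-homogeneous global minimizers of the two-valued problem showing that the only one supported on a flat interface with a genuine upward jump is the half-plane solution — equivalently, ruling out that the positive phase pokes through the interface or develops a cone. This is where the structural assumption $Q_1\le 1<2\le Q_2$ is used in an essential, non-perturbative way, and carrying it out rigorously (including the case where the blow-up lives entirely on the low side, reducing to the classical cone classification, versus straddling the interface) is the technical heart of the argument.
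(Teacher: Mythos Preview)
Your overall architecture — blow-up, Weiss monotonicity, classification of one-homogeneous limits, $\eps$-regularity via Theorem~\ref{t:main-main}, Federer dimension reduction — matches the paper's. The gap is in your treatment of interface points. You assert that at every $x_0\in\partial\{u>0\}\cap\partial E$ the blow-up is the half-plane solution aligned with the interface, so that $\mathrm{Sing}(u)\subset\partial\{u>0\}\setminus\partial E$ and hence $N^{\ast\ast}=N^\ast$. The paper does \emph{not} prove this, and explicitly flags it as open: the authors say they ``are not able to exclude the existence of cones with singularity on the jump interface, even in dimension $N=3$'', and that whether $N^{\ast\ast}<N^\ast$ is unknown. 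Your heuristic (``crossing into higher $Q$ is penalized, so the positive phase sits on the low side with boundary along $\Pi$'') is exactly the intuition one would like to make rigorous, but no one knows how to rule out genuinely singular one-homogeneous minimizers of the two-valued flat-interface problem in dimensions $3,4,\dots$. Your ``main obstacle'' paragraph correctly identifies this as the crux, but then your argument proceeds as if it were settled.

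The paper sidesteps this by \emph{defining} $N^{\ast\ast}$ as the smallest dimension admitting a singular one-homogeneous global minimizer of the two-valued problem with flat interface (so a priori $N^{\ast\ast}\le N^\ast$), and then doing two things separately: (i) for interface points whose blow-up \emph{is} a half-plane, Lemma~\ref{l:regular-blow-ups} forces $\nu=e_N$ and $\Gamma^2\in[Q_1(x_0),Q_2(x_0)]$ (note: not necessarily $\Gamma^2=Q_1(x_0)$ as you wrote), which feeds into Theorem~\ref{t:main-main}; (ii) the bound $N^{\ast\ast}\ge 3$ is obtained only from the trivial 2D fact that a nonnegative one-homogeneous function harmonic on its positivity set has $\{h>0\}$ a half-plane. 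That is all the theorem claims. To repair your argument, drop the classification claim at interface points, allow $\mathrm{Sing}(u)$ to meet $\partial E$, run Federer reduction with $N^{\ast\ast}$ so defined, and prove $N^{\ast\ast}\ge 3$ by the 2D argument.
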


\begin{remark}
The critical dimension $N^{\ast\ast}$, from Theorem \ref{t:main-main-2}
 above, is defined as follows: it is the smallest dimension $N\ge 2$ for which there is a nonnegative $1$-homogeneous Lipschitz function $u:\R^{N}\to\R$ whose free boundary $\partial\{u>0\}$ is NOT a regular manifold in the neighborhood of $0$, and which minimizes in $B_1$ a functional $J_Q$ with 
 $Q_1=const$, $Q_2=const$ and $E=\{x_N>0\}$.
 \end{remark}

\begin{remark}
We recall that the critical dimension $N^\ast$ for the classical one-phase Bernoulli problem is defined as follows: it is the smallest dimension $N\ge 2$ for which there is a nonnegative $1$-homogeneous Lipschitz function $u:\R^{N}\to\R$ whose free boundary $\partial\{u>0\}$ is NOT a regular manifold in the neighborhood of $0$, and which minimizes in $B_1$ a functional $J_Q$ with 
 $Q_1=Q_2=const$ and $E=\{x_N>0\}$. Clearly, we have that $N^{\ast\ast}\le N^{\ast}$, but it is not known if the inequality is strict. 
 \end{remark}

\begin{remark}[On the existence of singular cones]
In dimension $N=2$, we show that any one-homogeneous global minimizer $u$ of $J_Q$ (with $Q_1$ and $Q_2$ constants such that $Q_1>Q_2$, and $\partial E$ a plane) is of the form 
$$u(x)=c(x\cdot\nu)_+$$
with $\nu$ the exterior normal to $E$ and $c\in[Q_2,Q_1]$. At the moment we are not able to exclude the existence of cones with singularity on the jump interface, even in dimension $N=3$.
\end{remark}

Our analysis also allows to give a more precise description of the behavior of the free boundaries in dimension two, which we summarize in the following proposition.
\begin{corollary}\label{corollary:2D}
Let $Q:\R^2\to\R$ be of the form \eqref{e:definition-Q-intro} with jump set $\partial E$ is a $C^{1,\alpha}$ manifold and weight functions $Q_1,Q_2\in C^{0,\alpha}(\R^2)$ which are bounded from above and from below by positive constants. Then, the free boundary $\partial\{u>0\}\cap B_1$ of any nonnegative minimizer $u$ (of $J_Q$ in $B_1$) is $C^{1,\beta}$ regular for some $\beta>0$. Moreover, for any boundary point on the jump part of the interface $\partial E$
$$x_0\in\partial\{u>0\}\cap\partial E\cap B_1\qquad\text{with}\qquad Q_1(x_0)<Q_2(x_0),$$
we have that: 
\begin{enumerate}
\item[(i)] in a neighborhood of $x_0$ the free boundary $\partial\{u>0\}$ is the graph of a $C^{1,\alpha}$ function in the direction of the exterior normal $\nu_E(x_0)$ to $E$; 
\item[(ii)] $\partial\{u>0\}$ and 
$\partial E$ are tangent at $x_0$ and the normal $\nu_E$ is pointing outwards $\{u>0\}$ and $E$;
\item[(iii)] if the free boundary $\partial\{u>0\}$ crosses $\partial E$, that is, if for some $r>0$ we have
$$\partial\{u>0\}\cap E\cap B_r(x_0)\neq\emptyset\qquad\text{and}\qquad \partial\{u>0\}\cap (\R^N\setminus \overline E)\cap B_r(x_0)\neq\emptyset\,,$$
then it also sticks to $\partial E$, that is the set $\partial\{u>0\}\cap\partial E\cap B_r(x_0)$ has non-empty interior in $\partial E$.  
\end{enumerate}
\end{corollary}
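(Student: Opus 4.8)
The plan is to deduce Corollary~\ref{corollary:2D} from the main theorems, specialized to $N=2$, together with the classification of two-dimensional one-homogeneous minimizers announced in the remark above. First I would observe that, since $N=2<3\le N^{\ast\ast}$, part~(1) of Theorem~\ref{t:main-main-2} already gives that the entire free boundary $\partial\{u>0\}\cap B_1$ is $C^{1,\beta}$ regular; this settles the global regularity statement and, in particular, gives that at every point $x_0\in\partial\{u>0\}\cap B_1$ there is a well-defined tangent line. The remaining work is the finer description near a point $x_0\in\partial\{u>0\}\cap\partial E\cap B_1$ with $Q_1(x_0)<Q_2(x_0)$: we must produce the tangency with $\partial E$, the orientation of the normal, the graphicality in the direction $\nu_E(x_0)$, and the sticking dichotomy~(iii).

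For (i) and (ii), the key step is a blow-up argument at $x_0$. By scaling, $Q_1$ and $Q_2$ are Hölder continuous, so the blow-up limits have constant weights $Q_1(x_0)<Q_2(x_0)$ across the hyperplane that is the blow-up of $\partial E$ at $x_0$ (here one uses that $\partial E\in C^{1,\alpha}$, so its blow-up is a genuine hyperplane through the origin with normal $\nu_E(x_0)$). Appealing to the two-dimensional classification stated in the remark on singular cones, any such blow-up limit $u_0$ is of the form $u_0(x)=c\,(x\cdot\nu_E(x_0))_+$ with $c\in[Q_2(x_0),Q_1(x_0)]$ — wait, the remark gives $c\in[Q_2,Q_1]$ when $Q_1>Q_2$, so here with $Q_1(x_0)<Q_2(x_0)$ the correct reading is that the admissible slope lies in the interval determined by the two values, and in particular the half-plane $\{u_0>0\}$ is exactly $\{x\cdot\nu_E(x_0)>0\}^{c}$, i.e. the side of $\partial E$ on which $Q$ takes the \emph{smaller} value near $x_0$; since $Q_1<Q_2$ at $x_0$, that smaller side is $E$ itself, so $\nu_E$ points outward from both $E$ and $\{u>0\}$. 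This is exactly the hypothesis~\eqref{e:definition-eps-flat} of Theorem~\ref{t:main-main} for $u$ rescaled at a small enough scale, with $\nu=\nu_E(x_0)$: by the $\eps$-regularity theorem, in a neighborhood of $x_0$ the free boundary is a $C^{1,\beta}$ graph in the direction $\nu_E(x_0)$ and shares the tangent plane of $\partial E$ at $x_0$, which is (i) and (ii).

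For (iii), the argument is topological combined with the regularity just established. Suppose $\partial\{u>0\}$ meets both open sides $E\cap B_r(x_0)$ and $(\R^N\setminus\overline E)\cap B_r(x_0)$. On the side where the weight is larger (outside $E$, since $Q_1(x_0)<Q_2(x_0)$) the free boundary is, by the interior Alt–Caffarelli theory for Hölder weights and part~(1) of Theorem~\ref{t:main-main-2}, a $C^{1,\beta}$ curve with the Bernoulli gradient condition $|\nabla u|=\sqrt{Q_2}$; on the $E$-side it is a $C^{1,\beta}$ curve with $|\nabla u|=\sqrt{Q_1}$. If the free boundary passed transversally through $\partial E$ at an isolated point $y\in\partial E\cap B_r(x_0)$, then by~(ii) applied at $y$ (note $y\in\partial\{u>0\}\cap\partial E$) it would have to be tangent to $\partial E$ there and the positivity set would lie on the outward side of $E$ — contradicting that $\partial\{u>0\}$ enters $E$ near $y$. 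Hence the contact set $\partial\{u>0\}\cap\partial E\cap B_r(x_0)$ cannot be a single point or a discrete set; using the $C^{1,\beta}$ graphicality of $\partial\{u>0\}$ over the tangent line (which coincides with that of $\partial E$ at each contact point) one upgrades this to the statement that the contact set is relatively open in $\partial E$.

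The main obstacle I anticipate is item~(iii): the clean picture ``either the free boundary stays on one side of $\partial E$, or it sticks'' requires ruling out pathological oscillatory configurations where $\partial\{u>0\}$ crosses $\partial E$ on a set with empty interior — e.g. a Cantor-like contact set. The resolution should come from combining the two one-sided Bernoulli conditions $|\nabla u|=\sqrt{Q_1}$ and $|\nabla u|=\sqrt{Q_2}$ (incompatible with a transversal $C^1$ crossing since the two slopes differ) with the $C^{1,\beta}$ regularity of $\partial\{u>0\}$ from Theorem~\ref{t:main-main-2}(1) and the $C^{1,\alpha}$ regularity of $\partial E$: two $C^1$ curves that are tangent at every point of their intersection, with that intersection being the zero set of the $C^1$ difference of the two graphs, force the difference to vanish on an interval once it has a non-isolated zero — this is where the tangency conclusion of Theorem~\ref{t:main-main} (same tangent plane) does the real work. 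I would also need to double-check the direction of the slope interval in the two-dimensional classification so that ``larger weight side'' is correctly matched to the side not containing $\{u>0\}$; this is a matter of bookkeeping with the sign convention in the remark, but it is essential for the orientation claim in~(ii).
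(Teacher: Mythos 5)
Your reduction to Theorem~\ref{t:main-main-2}(1) for the global $C^{1,\beta}$ regularity, and your blow-up argument for (i) and (ii) via the classification of two-dimensional one-homogeneous cones (which is exactly the content of Lemma~\ref{l:regular-blow-ups}) followed by the $\eps$-regularity Theorem~\ref{t:main-main}, is the paper's intended route. The bookkeeping slips you flag yourself are real but minor: Theorem~\ref{t:main-main} is stated with $\nu$ the \emph{interior} normal to $\partial E$, so with $Q_1(x_0)<Q_2(x_0)$ and $\{u_\infty>0\}$ on the $E$-side, the flatness direction is $-\nu_E(x_0)$, not $\nu_E(x_0)$; the final orientation conclusion in (ii) still comes out right.

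The genuine gap is in (iii), and you diagnosed it yourself but the proposed fix does not hold up. The closing claim -- that two $C^1$ curves which are tangent at every intersection point, with a non-isolated intersection, must coincide on an interval -- is false. For instance, if $K\subset\R$ is a fat Cantor set containing $0$ as a non-isolated point, the function $f(t)=\dist(t,K)^2$ is $C^1$ with $f=f'=0$ on all of $K$, yet $\{f=0\}=K$ contains no interval; so $C^1$-tangency at every contact point is fully compatible with a nowhere-dense contact set. Tangency alone (the conclusion of Theorem~\ref{t:main-main}) is therefore not enough to force the sticking behaviour in (iii), and your appeal to the mismatch of the two slopes $\sqrt{Q_1}\ne\sqrt{Q_2}$ is not carried to a proof.

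What closes the gap is the pointwise trichotomy of Lemma~\ref{lemma:ApplQDiscSolFlatAtContact}, which your argument does not invoke. At any contact point $z\in\partial\{u>0\}\cap\partial E$, once the free boundary is known to be flat (which Theorem~\ref{t:main-main-2}(1) plus part (ii) supply), after rescaling exactly one of the following holds in a neighbourhood of $z$: the free boundary is contained in $\overline E$; or it is contained in $\R^N\setminus E$; or it coincides with $\partial E$. In the language of the two graphs $f$ (free boundary) and $g$ ($\partial E$), every zero of $f-g$ is a local minimum of $f-g$, a local maximum of $f-g$, or an interior point of $\{f=g\}$ -- never a genuine sign change without a neighbourhood of coincidence. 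From this, if $\partial\{u>0\}$ meets both $E$ and $\R^N\setminus\overline E$ in $B_r(x_0)$, pick a point $z$ on the common boundary of $\{f>g\}$ and $\{f<g\}$; the first two alternatives at $z$ are impossible, so the third holds, and $\partial\{u>0\}\cap\partial E$ contains a neighbourhood of $z$. This is the additional input you need, and it is where the one-sided Bernoulli conditions enter rigorously -- through the comparison argument inside Lemma~\ref{lemma:ApplQDiscSolFlatAtContact}, not through an abstract statement about tangent $C^1$ curves.
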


\subsection{Singular free boundaries in dimension two}
Let $Q:\R^N\to\R$ be a measurable function bounded from above and below by positive constants. It is a long-standing open problem to determine if these minimal assumptions on $Q$ are sufficient in order to establish  the regularity of the free boundary of the minimizers of the functional $J_Q$ analogously to the classical results of Alt and Caffarelli \cite{AltCaffarelli:OnePhaseFreeBd}. Indeed, apart from being a natural generalization of \cite{AltCaffarelli:OnePhaseFreeBd} this question is also related to the study of quantitative geometric functional inequalities involving the Fraenkel asymmetry (e.g. the quantitative Faber-Krahn inequality \cite{BrascoDePhilippsVelichkov2016:SharpFaberKrahn}). \medskip

A consequence of our results, in particular of Corollary \ref{corollary:2D}, is that there are minimizing free boundaries admitting singularities even in dimension two, contrary to what happens in the classical one-phase setting, where the free boundaries are always smooth in 2D. In order to see this, we first notice that an \emph{improvement-of-flatness} cannot hold in such generality (see Remark \ref{remark:counter-example-improvement-of-flatness}). We then build up on this example to show the existence of a singular free boundary in dimension two.

\begin{remark}[On the non-existence of improvement-of-flatness theorems for minimizers of $J_Q$]\label{remark:counter-example-improvement-of-flatness}
Theorem \ref{t:main-main-2} implies that, given a general weight $Q\in L^\infty(\R^N)$ bounded from above and below by positive constants, it is not possible to prove an improvement of flatness result of the form: 
\begin{center}
\it If $u$ minimizes $J_{Q}$ in $B_r$ and $u$ is $\eps$-flat in $B_r$,\qquad \qquad \qquad \qquad\\ \qquad\qquad\qquad\qquad then $u$ is $c\eps$-flat in $B_{r/2}$ for some universal constant $c\in(0,1)$. 
\end{center}
\noindent \rm Indeed, let $N=2$ and 
$$Q_\eps(x):=(1+\eps)1_{\{x_1>0\}}+(1-\eps)1_{\{x_1<0\}}\,.$$
%
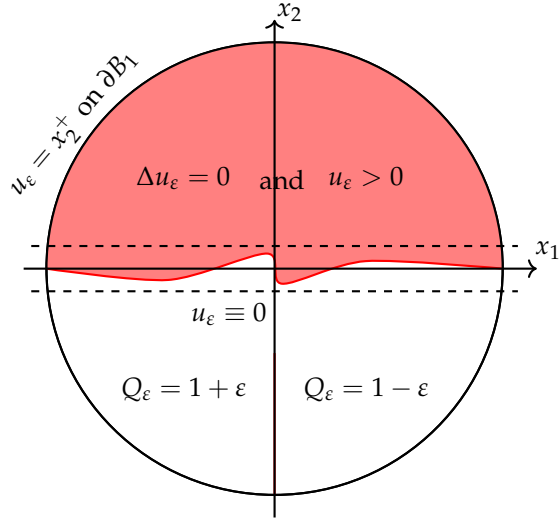
\begin{figure}[h]
\centering{
\begin{tikzpicture}
\coordinate (O) at (0,0);
\draw[] (0,0) circle [radius = 30mm];
\draw[thick] (O) circle [very thick, radius=3cm, name path=c];
\draw [thick, color=red, name path=2] plot [smooth] coordinates  {(0,3)  (0,-3)};
\draw [draw=none, color=red, name path=hor2] plot [smooth] coordinates  {(-1.65,1.12) (1.6,1.12)};
\draw [draw=none, color=red, name path=hor3] plot [smooth] coordinates  {(-1.65,-1.12) (1.6,-1.12)};
\draw [thick, color=red, name path=5] plot [smooth] coordinates {(3,0)  (1.2,0.1) (0.1,-0.2) (-0.1,0.2)  (-1.4,-0.15) (-3,0)};
\begin{scope}[]
\draw[draw=none, fill=red!50] (0,0) -- +(0:3cm) arc (0:180:3cm);
\tikzfillbetween[of=5 and hor2] {color=red!50};
\tikzfillbetween[of=5 and hor3] {color=white};
\end{scope}
\draw [thick, color=red] plot [smooth] coordinates {(3,0)  (1.2,0.1) (0.1,-0.2) (-0.1,0.2)  (-1.4,-0.15) (-3,0)};
\draw[thick] (O) circle [very thick, radius=3cm];
\node[label={[rotate=0]:$u_\eps>0$}] at (1.2,0.8) {};
\node[label={[rotate=0]:$\Delta u_\eps=0$}] at (-1.2,0.8) {};
\node[label={[rotate=0]:and}] at (0.1,0.8) {};
\node[label={[rotate=0]:$u_\eps\equiv0$}] at (-0.6,-1) {};
\node[label={[rotate=50]:$u_\eps=x_2^+$ on $\partial B_1$}] at (-2.4,1.6) {};
\draw node at (-1.2,-1.6) {$Q_\eps=1+\eps$};
\draw node at (1.2,-1.6) {$Q_\eps=1-\eps$};
\draw[thick, ->] (0,-3.3) -- (0,3.3);
\draw[thick, ->] (-3.3,0) -- (3.45,0);
\draw[thick, dashed] (-3.2,0.3) -- (3.2,0.3);
\draw[thick, dashed] (-3.2,-0.3) -- (3.2,-0.3);
\draw node at (0.2,3.4) {$x_2$};
\draw node at (3.6,0.25) {$x_1$};
\end{tikzpicture}
}
\caption{A sketch of the construction in Remark  \ref{remark:counter-example-improvement-of-flatness}. \label{fig:NoEpsilonRegularityThm}}
\end{figure}
%
Take a minimizer $u_\eps$ of $J_{Q_\eps}$ in $B_1$ with $u_\eps=(x_2)^+$ on $\partial B_1$, then $u_\eps$ is $C\eps$-flat in $B_1$. As $\eps\to0$, the minimizers $u_\eps$ converge uniformly to the minimizer $u_0$ of the Alt-Caffarelli functional
$$J(u)=\int_{B_1}|\nabla u|^2\,dx+|\{u>0\}\cap B_1|,$$
for the boundary datum $(x_2)^+$, which is known to be exactly $u_0=(x_2)^+$. By a sliding viscosity argument, it is immediate to check that $u_\eps$ should be $2\eps$-flat in the direction $e_2$. Precisely,
$$\frac{1}{1-\eps}(x_2-\eps)_+\le u_\eps(x_1,x_2)\le \frac{1}{1+\eps}(x_2+\eps)_+\qquad\text{for}\qquad (x_1,x_2)\in B_1.$$ 
On the other hand, by Corollary \ref{corollary:2D}, the free boundaries $\partial\{u_\eps>0\}$ are crossing tangentially $\partial E=\{x_1=0\}$ so that the oscillation of the normal (from scale $1$ to scale $0$) is much larger than the initial flatness. 
\end{remark}

Using the idea from the previous remark and the results from Corollary \ref{corollary:2D}, we obtain the following result. 

\begin{corollary}
For every $\eps>0$ there is a measurable function $Q:\R^2\to\R$ with
$$1-\eps\le Q(x_1,x_2)\le 1+\eps\quad\text{for every}\quad (x_1,x_2)\in\R^2,$$
and a function $u:\R^2\to\R$ which is a minimizer of $J_Q$ in $B_1$ satisfying 
$$(x_2-\eps)_+\le u(x_1,x_2)\le (x_2+\eps)_+\qquad\text{for}\qquad (x_1,x_2)\in B_1,$$
such that $0\in\partial\{u>0\}$ is a singular free boundary point. 
\end{corollary}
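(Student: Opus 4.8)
The plan is to iterate the mechanism of Remark~\ref{remark:counter-example-improvement-of-flatness} at a sequence of scales shrinking to the origin, so that the normal of the free boundary of the minimizer we build oscillates by a fixed angle at infinitely many scales near $0$.

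I would first isolate the building block. Fix a small $\eta>0$ and the weight $(1+\eta)1_{\{x_1>0\}}+(1-\eta)1_{\{x_1<0\}}$; by the sliding/comparison argument indicated in the Remark, the minimizer with datum $(x_2)^+$ on $\partial B_1$ is $C\eta$-flat in the direction $e_2$ at scale $1$, and by Corollary~\ref{corollary:2D}\,(ii) its free boundary is tangent to the jump line $\{x_1=0\}$ at the (essentially unique) point $x^\ast$ where it meets it. Rescaling around $x^\ast$ by a suitably small factor $\rho=\rho(\eta)$ and rotating by $\pi/2$ then recovers a configuration of the same type: a picture $C\eta$-flat at scale $1$ around some line, with a jump arc placed perpendicular to it and a free boundary about to cross it tangentially. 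The point I would exploit — this is the content of the Remark — is that one such step rotates the normal by a universal angle ($\sim\pi/2$) \emph{independently of $\eta$}, while the scale at which it occurs can be made arbitrarily small by taking $\eta$ small.

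I would then build $Q$ and $u$ by induction on the scale. Starting from $r_0=1$ and the configuration above, locate the tangential crossing point $x_1\in B_{1/2}$ and, at a much smaller scale $r_1$ around $x_1$, superimpose a new $C^{1,\alpha}$ jump arc, perpendicular to the (rotated) free boundary, across which $Q$ drops by $2\eta_1$; this forces a further $\pi/2$-rotation of the normal at a scale $r_2\ll r_1$ around a new point $x_2$, and so on. Choosing $r_{k+1}\ll r_k$, $x_{k+1}\in B_{r_{k+1}}(x_k)$ and $\sum_k r_k<\infty$ produces a limit point $x_\ast=\lim_k x_k$ — which, after a small translation (of size $O(\eta)\le\eps$), we may take to be $0$ — together with a measurable weight $Q$, consisting of countably many disjoint jump arcs living at the scales $r_k$ superimposed on the initial jump, which satisfies $1-\eps\le Q\le 1+\eps$ as soon as all the $\eta_k\le\eps$. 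The function $u$ is defined as the locally uniform ($H^1_{\loc}$) limit of the minimizers $u_k$ of $J_{Q_k}$ in $B_1$ with datum $(x_2)^+$, where $Q_k$ truncates the construction after $k$ steps: the uniform Lipschitz and non-degeneracy bounds — available since $1-\eps\le Q_k\le 1+\eps$ — give compactness, the convergence $Q_k\to Q$ a.e.\ together with the density estimates passes minimality to the limit, and the scale-$1$ flatness of each $u_k$ yields $(x_2-\eps)^+\le u\le(x_2+\eps)^+$.

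Finally, since changing $Q$ inside $B_{r_{k+1}}(x_k)$ only perturbs the minimizer by an amount controlled by the size of that ball, each rotation gets ``locked in'': the free boundary of $u$ contains, for every $k$, a piece near $x_k$ flat around a line $\ell_k$ with $\ell_{k+1}\perp\ell_k$; as the balls carrying these pieces accumulate at $0$, the free boundary is not a $C^{1,\beta}$ graph in any neighbourhood of $0$ (equivalently, $0$ is not a regular point), so $0$ is a singular free boundary point. The main obstacle is the inductive step: one must choose the scales $r_k$ and the auxiliary jump arcs so that, near $x_k$ and at scale $r_k$, the configuration is genuinely a rescaled copy of the building block up to errors summable in $k$, which requires separating the scales and using the uniform Lipschitz bound together with interior estimates for harmonic functions to control the effect of the coarser and finer jump arcs on the local free boundary; here the precise quantitative form of Corollary~\ref{corollary:2D} and of the flatness estimates must be used. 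The secondary delicate point is the compactness/stability argument making the limit $u$ a minimizer of $J_Q$ itself and not only of the truncations, which rests on $Q_k\to Q$ a.e., the uniform $L^\infty$ bounds, and the uniform density estimates for $\{u_k>0\}$.
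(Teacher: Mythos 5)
Your proposal is recognisably related to the paper's construction at the level of the underlying mechanism (iterate the failure of improvement of flatness at a sequence of scales shrinking to a point), but the implementation you propose is genuinely different, and it has gaps that the paper's construction is specifically designed to avoid.

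What the paper actually does is far more economical: the weight $Q$ is fixed in advance, with no adaptive step and no limit process. Concretely,
\[
Q(x_1,x_2)=
\begin{cases}
1-\delta & \text{if } \tfrac{1}{n+1}<|x_1|<\tfrac1n,\ n \text{ even},\\
1+\delta & \text{if } \tfrac{1}{n+1}<|x_1|<\tfrac1n,\ n \text{ odd},
\end{cases}
\]
a nested family of vertical strips accumulating at $\{x_1=0\}$. All jump interfaces are parallel vertical lines $\{x_1=\pm\tfrac1n\}$; they never intersect. Because the minimizer $u$ with datum $x_2^+$ is $\eps$-flat, its free boundary must cross every line $\{x_1=\pm\tfrac1n\}$ inside $\{-\eps<x_2<\eps\}$, and Corollary~\ref{corollary:2D}(iii) forces it to \emph{stick} to the line at each crossing, with exterior normal $+e_1$ when $n$ is even and $-e_1$ when $n$ is odd by Corollary~\ref{corollary:2D}(ii). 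Since the strip boundaries accumulate at $\{x_1=0\}$ by design, a cluster point $x_\infty=(0,y)$ of the crossing points is automatically a free boundary point in whose every neighbourhood the normal oscillates between $+e_1$ and $-e_1$; hence $\partial\{u>0\}$ is not locally a graph of a continuous function there, and one re-centers and rescales. No rotation of the interfaces and no perpendicular arcs are needed: the paper's key observation is that \emph{sticking} already flips the normal by $\pi$ across consecutive (parallel) interfaces, so the singular accumulation is forced directly.

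By contrast, your construction builds $Q$ adaptively, placing at step $k$ a new jump arc perpendicular to the rotated free boundary near the previous crossing point. This introduces difficulties you partly acknowledge but do not resolve. First, and most concretely, the new arc perpendicular to the free boundary is then perpendicular to the \emph{old} jump interface (to which the free boundary is stuck near $x_k$), so the new and old interfaces cross, and the jump set of $Q$ acquires a corner at precisely the region you need to analyse; Corollary~\ref{corollary:2D} requires a $C^{1,\alpha}$ jump set and does not apply across such a junction, and you would have to show that the relevant free boundary stays away from it. Second, the inductive ``locking-in'' step — that adding a jump arc at scale $r_{k+1}$ does not disturb the finitely many coarser pictures — is not a small perturbation argument of the kind you gesture at: the minimizer depends non-locally on $Q$, and quantitative stability of the free boundary in a fixed annulus as $Q$ changes in a small ball is itself a nontrivial statement. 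Third, because your $Q$ depends on the minimizer at each stage, you must pass to the limit in both $Q$ and $u$ and show the limit $u$ minimizes $J_Q$ for the limit $Q$; you sketch this but the identification of the limit as a minimizer (not just of the truncations $J_{Q_k}$) needs the usual density-estimate/Ascoli argument to be carried out. The paper sidesteps all three issues simultaneously by using parallel interfaces and a weight that is independent of the solution. Your approach is not fundamentally wrong, but as written it is substantially incomplete, and the gaps are exactly the ones the paper's cleverer choice of $Q$ makes unnecessary.
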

\begin{proof}
We first notice that, reasoning as in Remark \ref{remark:counter-example-improvement-of-flatness}, we have that for every $\eps>0$ there is $\delta>0$ with the following property: for any $Q:B_1\to\R$ satisfying
$$1-\delta\le Q(x_1,x_2)\le 1+\delta\quad\text{for every}\quad (x_1,x_2)\in\R^2,$$
and any minimizer $u_Q:B_1\to\R$ of $J_Q$ in $B_1$ with boundary datum
$$u_Q=x_2^+\quad\text{on}\quad \partial B_1,$$
we have 
$$(x_2-\eps)_+\le u_Q(x_1,x_2)\le (x_2+\eps)_+\qquad\text{for}\qquad (x_1,x_2)\in B_1.$$
Take 
$$Q(x_1,x_2):=\begin{cases}
1-\delta & \text{if}\quad \frac{1}{n}<|x_1|<\frac{1}{n+1}\quad\text{with } n \text{ even};\\
1+\delta & \text{if}\quad \frac{1}{n}<|x_1|<\frac{1}{n+1}\quad\text{with } n \text{ odd}.
\end{cases}$$
Notice that the free boundary $\partial \{u>0\}$ has to cross all the vertical lines $\{x_1=\pm\frac1n\}$ in $B_1\cap\{-\eps<x_2<\eps\}$ and that the exterior normal to $\{u>0\}$ points to the right when the free boundary crosses $\{x_1=\frac1n\}$ with $n$ even, and points to the left when the free boundary crosses $\{x_1=\frac1n\}$ with $n$ odd (the picture is reflected when $\{x_1=-\frac1n\}$). Moreover, at every crossing, $\{u>0\}$ sticks to the jump set $\{x_1=\pm\frac1n\}$ in the sense described in Corollary \ref{corollary:2D} (iii). 

Finally, take a point $x_\infty=(0,y)$, obtained as limit of points $x_n=(\frac1n,y_n)\in\partial\{u>0\}$. Then, $x_\infty\in\partial\{u>0\}$, but the free boundary $\partial\{u>0\}$ is not locally the graph of a continuous function. Now, it is sufficient to take $x_\infty$ as center of the coordinate system and to rescale the picture by $1-|y|$.
\end{proof}

\subsection*{Outline of the proofs and plan of the paper}
The main result of the paper is Theorem \ref{t:main-main}. Indeed, Theorem \ref{t:main-main-2} follows Theorem \ref{t:main-main} and the Federer's dimension reduction principle from which we deduce the dimension bounds on the singular set. Corollary \ref{corollary:2D} then follows from Theorem \ref{t:main-main-2}, Theorem \ref{t:main-main} and Lemma \ref{l:regular-blow-ups}. \medskip
 
 In order to prove Theorem \ref{t:main-main}, we first notice that inside the open sets $E$ and $\R^d\setminus \overline E$ any minimizer $u$ of $J_Q$ is a local minimizer of the one-phase problem with H\"older continuous weight function ($Q_1$ and $Q_2$, respectively). Thus, the regularity of the boundary in these sets is a consequence of the classical results of Alt-Caffarelli \cite{AltCaffarelli:OnePhaseFreeBd} (see also \cite{DeSilva:FreeBdRegularityOnePhase} and \cite{Velichkov:RegularityOnePhaseFreeBd}). Thus, we only need to study the behavior of the free boundary at points of the jump set $\partial E$.\medskip

There is a $C^{1,\alpha}$ change of coordinates that transforms $E$ into a half-space, precisely 
$$E=\{x_N>0\}\quad\text{and}\quad \partial E=\{x_N=0\},$$ 
while the functional $J_Q$ in these new coordinates becomes 
$$\int_{B_1}\Big(\nabla u\cdot A(x)\nabla u+Q(x)\ind{\{u>0\}}\Big)\,dx,$$
where $A(x)$ is a matrix with variable coefficients with: 
\begin{equation}\label{e:conditions-on-A-intro}
\quad A(x) \in Sym(n), \quad [A(x)]^{ij} = a^{ij}(x) \in C^{0, \alpha}\left( \overline{B_1} \right),
\end{equation}
that satisfies the following uniform ellipticity condition
\begin{equation}\label{e:conditions-on-A-intro-2}
\exists \, c>0 : c^{-1} \vert \zeta \vert^2 \le a^{ij}(x) \zeta^i \zeta^j \le c \vert \zeta \vert^2, \quad \forall  x \in B_1 .
\end{equation}
The weight $Q$ also changes, but remains H\"older continuous and bounded away from zero: $Q = Q(x) \in C^{0, \alpha}(B_1)$ and there are constants $Q_{min}$ and $Q_{max}$ such that  
\begin{equation}\label{e:conditions-on-Q-intro}
0 < Q_{min} \le Q(x) \le Q_{max} \quad \text{for every}\quad  x \in B_1 .
\end{equation}
Now, the key observation (see Lemma \ref{lemma:ApplQDiscSolFlatAtContact}) is that in a neighborhood of a point $x_0\in\partial E$ of the reduced boundary $\partial^\ast\{u>0\}$ can happen only one of the following: 
\begin{itemize}
    \item[(i)] the free boundary is contained in $\overline E$, that is, $\overline{\{u>0\}} \cap \{ x_N > 0 \} = \emptyset$, \\

    \item[(ii)] the free boundary is contained in $\R^N\setminus E$, that is, $\{ x_N \le 0 \} \subseteq \overline{\{u>0\}}$, \\

    \item[(iii)] the free boundary coincides with $\partial E$, that is, $\overline{\{u>0\}}   = \{ x_N \le 0 \}$. \\
\end{itemize} 

In the case (i) the regularity of the free boundary follows from the work of Chang-Lara and Savin \cite{ChangLaraSavin:BoundaryRegularityOnePhase}. In order to prove Theorem \ref{t:main-main}, we have to deal with case (ii). We do this in Theorem \ref{thm:NonSharpRegularity}, where we prove a regularity result for solutions to general free boundary problems of the following form:
\begin{equation}\label{eqn:MainPb}
\begin{cases}
Lu = f & \text{in } \{ u>0 \} \cap B_1 , \\
u = g & \text{on } \partial B_1 , \\
\vert D_a u \vert = Q & \text{on } \partial\{ u>0 \} \cap B_1^- , \\
\vert D_a u \vert \le Q & \text{on } \partial\{ u>0 \} \cap \{ x_N = 0 \} \cap B_1 , \\
u > 0 & \text{in } B_1^+ ,
\end{cases}
\end{equation}
where $0 \le u \in H^1(B_1)$, $0 \le g\in H^1(B_1)$, the operator $L = L(x)$ is defined as 
\begin{equation}\label{e:definition-L}
L(x)u = \diverg(A(x) \nabla u),
\end{equation}
with $A$ and $Q$ as above. We also set $\vert D_a u \vert \coloneqq \left( a^{ij} \partial_i u \partial_j u \right)^{1/2}$, while for the right hand side, we require that $f \in L^{\infty}(B_1)$ with $f \le 0$ on $B_1^+$ (we notice that for the application to Theorem \ref{t:main-main}, we only need to study the problem in the case $f\equiv 0$). \medskip

Denoting with $\mathcal{F}$ the functional
\[
\mathcal{F}(u) \coloneqq \int_{B_1} a^{ij} \partial_i u \partial_j u + 2 \int_{B_1} f u + \int_{\{u>0\} \cap B_1} Q^2 ,
\]
we define a \emph{(variational) solution} to problem \eqref{eqn:MainPb} any non-negative function $u \in H^1(B_1)$, with $u - g \in H^1_0(B_1)$, that satisfies
\begin{equation}\label{eqn:VariationalSOlution}
\mathcal{F}(u) \le \mathcal{F}(v) , \quad \forall \varphi \in H^1(B_1) \text{ with } v-u \in H^1_0(B_1) \text{ and } B_1^+ \subseteq \{ v > 0\} \cap B_1.
\end{equation}
In the spirit of \cite{ChangLaraSavin:BoundaryRegularityOnePhase}, we prove the following result for variational solutions to \eqref{eqn:MainPb}:
\begin{theorem}\label{thm:NonSharpRegularity}
Let $A(x)$ be a symmetric matrix with variable $C^{0,\alpha}$ coefficients satisfying \eqref{e:conditions-on-A-intro} and \eqref{e:conditions-on-A-intro-2}. Let $Q\in C^{0,\alpha}(B_1)$ be a strictly positive function satisfying \eqref{e:conditions-on-Q-intro}. Let L(x), g(x) and f(x) be as in \eqref{eqn:MainPb} and let u(x) be a variational solution to problem \eqref{eqn:MainPb} in the sense of \eqref{eqn:VariationalSOlution}.

Then, for any $\beta \in [0, \min\{\alpha,1/2\})$, the set $\partial\{ u>0 \} \cap B_1$ is of class $C^{1, \beta}$ in a neighborhood of any free boundary point $x \in \partial{\{u>0\}} \cap \{ x_N = 0 \} \cap B_1$.
\end{theorem}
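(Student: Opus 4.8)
The plan is to follow the viscosity-solution strategy of Chang-Lara and Savin, adapted to the one-sided overdetermined condition on the jump hyperplane $\{x_N=0\}$. First I would set up the basic regularity theory for variational solutions: nondegeneracy of $u$ near $\partial\{u>0\}$, Lipschitz continuity, and the fact that $\mathcal H^{N-1}$-a.e.\ point of the free boundary in $B_1^-$ is a reduced-boundary point where the classical Alt--Caffarelli/Chang-Lara--Savin theory applies (so the "lower'' free boundary is already smooth away from $\{x_N=0\}$ by the variable-coefficient, Hölder-weight version of the Alt--Caffarelli theorem). The heart of the matter is a neighborhood of a point $x_0\in\partial\{u>0\}\cap\{x_N=0\}$. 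There I would perform a blow-up: rescalings $u_r(x)=u(x_0+rx)/r$ converge (along subsequences, using Lipschitz bounds and nondegeneracy) to a global variational solution $u_0$ of the constant-coefficient, constant-weight problem \eqref{eqn:MainPb} with $A\equiv A(x_0)$, $Q\equiv Q(x_0)$, $f\equiv 0$, and $E=\{x_N>0\}$, since $u>0$ in $B_1^+$ forces $B_1^+\subseteq\{u_0>0\}$. After a linear change of variables normalizing $A(x_0)$ to the identity, $u_0$ is a global minimizer of the standard one-phase functional subject to the constraint $\{x_N>0\}\subseteq\{u_0>0\}$.

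The key classification step is to show that such a $u_0$ must be the half-plane solution $u_0(x)=Q(x_0)(x\cdot\nu)_+$ with $\nu=e_N$. This is where I expect the main obstacle to lie, and it is exactly the content foreshadowed in Lemma \ref{lemma:ApplQDiscSolFlatAtContact} and the trichotomy (i)--(iii) in the outline: one shows that, because of the one-sided condition $\{x_N\le 0\}\subseteq\overline{\{u_0>0\}}$ (case (ii)) together with the free boundary condition $|D_au_0|\le Q(x_0)$ on $\{x_N=0\}$ and $|D_au_0|=Q(x_0)$ below, the only $1$-homogeneous global solutions in the relevant flat regime are the linear ones $Q(x_0)(x_N)_+$. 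I would argue this via a Weiss-type monotonicity formula (whose monotonicity must be checked to survive the one-sided constraint and the flux inequality on $\{x_N=0\}$) to get $1$-homogeneity of blow-ups, then a Fourier/spherical-harmonics argument on $\sphere^{N-1}$ — or a direct comparison/sliding argument — using that any homogeneous competitor which is positive on the upper half-sphere and has the prescribed Rayleigh-quotient-type flux on the lower free boundary cannot beat the linear profile. The flatness hypothesis (we only claim the conclusion near a point where $u$ is already $\eps$-flat, cf.\ the reduction from Theorem \ref{t:main-main}) guarantees the blow-up is flat, which rules out genuinely conical competitors.

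Once the blow-up is identified as a half-plane solution, I would run an improvement-of-flatness / $\eps$-regularity scheme à la De Silva: if $u$ is $\eps$-flat in $B_1$ in direction $e_N$ (the normal to $\partial E$), then in $B_{1/2}$ it is $\tfrac12\eps$-flat in a possibly rotated direction, with the rotation controlled by $\eps$; the Hölder continuity of $A$ and $Q$ contributes an error of order $r^\alpha$ at scale $r$, which is why the final exponent is $\beta<\min\{\alpha,1/2\}$. Crucially, because the constraint $B_r^+\subseteq\{u>0\}$ and the two-sided flux condition force the limiting normal to agree with $e_N$, the linearized problem is a one-sided thin obstacle–type problem for the flatness function on $\{x_N=0\}$ whose solutions gain the needed decay (the $1/2$ in the exponent is the Almgren frequency of the thin-obstacle/Signorini problem). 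Iterating the flatness improvement across dyadic scales and summing the geometric series with the $r^\alpha$ perturbation gives that $\partial\{u>0\}$ is a $C^{1,\beta}$ graph over $\{x_N=0\}$ near $x_0$, tangent to $\partial E$; combined with the interior regularity in $B_1^{\pm}$ this proves the theorem. The step I expect to be genuinely delicate is establishing the Weiss-monotonicity and the half-plane classification of blow-ups in the presence of the one-sided constraint and the inequality (rather than equality) in the flux condition on $\{x_N=0\}$, since the usual variational first-variation arguments must be replaced by one-sided (variational-inequality) analogues.
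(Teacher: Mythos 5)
Your high-level strategy matches the paper's (viscosity framework, blow-up classification, improvement of flatness with a Signorini-type linearization giving the $\min\{\alpha,1/2\}$ exponent), but there are two substantive problems in the way you set it up.

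First, your blow-up classification is wrong in a way that matters. You assert the blow-up must be $u_0(x)=Q(x_0)(x\cdot\nu)_+$ with $\nu=e_N$, i.e.\ that the slope is pinned to $Q(x_0)$. In fact, because on $\{x_N=0\}$ the free boundary condition is only the \emph{inequality} $|D_au|\le Q$ (the constraint $B_1^+\subseteq\{u>0\}$ is active there), the paper shows (Corollary \ref{crl:BlowUpHalfPlane}) that the blow-up is $\alpha\,x_N^+$ for \emph{some} $\alpha\in(0,1]$ after the normalization $Q(0)=1$. The sub-critical case $\alpha<1$ is not a pathology to be ruled out — it is the generic situation when the free boundary sticks to the interface. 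This forces a dichotomy in the improvement-of-flatness step (Proposition \ref{prop:improvFlatness}): if the observed slope satisfies $|1-\alpha|\le C\eps$, one linearizes to the Signorini problem, as you describe; but if $|1-\alpha|>C\eps$, Lemma \ref{lemma:solIsFlatAtContact} shows the free boundary collapses onto $\{x_N=0\}$ at scale $1/2$, and one instead uses a Schauder-type improvement for $L$-harmonic functions on the half-ball. Without this second regime your iteration scheme does not close; as written, it would require the slope to be $\eps$-close to $1$ at every scale, which you cannot guarantee. Related to this, at contact points on $\{x_N=0\}$ the normal stays $e_N$ at every scale — only the slope $\alpha$ gets updated (see Corollary \ref{crl:RateConvergBlowUp}), so your phrase ``$\tfrac12\eps$-flat in a possibly rotated direction'' misdescribes the contact-point iteration.

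Second, you rely on a flatness hypothesis (``we only claim the conclusion near a point where $u$ is already $\eps$-flat, cf.\ the reduction from Theorem \ref{t:main-main}'') that Theorem \ref{thm:NonSharpRegularity} does not have: flatness near contact points is a \emph{conclusion} of the blow-up classification, not a hypothesis. The classification itself is therefore load-bearing, and the paper establishes it not by Weiss monotonicity (which it uses only later, for Theorem \ref{t:main-main-2}) but by a Caffarelli--Salsa-type linear-growth lemma (Lemma \ref{lemma:linearBehavior}) on the $u>0$ side combined with a second blow-up and a diagonal argument, which sidesteps the monotonicity verification you correctly flag as delicate for the one-sided problem. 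Your Weiss route could in principle work, but it would only give homogeneity, and you would still need a separate classification of $1$-homogeneous global solutions subject to the constraint and the flux inequality on $\{x_N=0\}$ — in particular you would need to prove, not assume, that $\{u_0>0\}$ is a half-space (true in dimension $2$, but this is exactly what may fail in high dimensions and is why the dimensional restrictions appear in Theorem \ref{t:main-main-2}). The paper avoids this entirely at this stage because the constraint $B_1^+\subset\{u>0\}$ forces the positivity set to contain a half-space and the linear-growth lemma then produces a flat profile without any homogeneity or spherical-harmonics analysis.
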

\begin{remark}
The condition on the non-positivity of $f$ over $D$ is used in Remark \ref{rmk:BoundBelowfNotPositive} and in the proof of Lemma \ref{lemma:LocalNonDegenerate} below. Basically, it allows to compare from below the solutions with positive $L$-harmonic functions over suitable domains.
\end{remark}

\begin{remark}
The same regularity result holds for flat Lipschitz continuous viscosity solutions to the problem \eqref{eqn:MainPb} (see Section  \ref{section:ImprovFlatness}). We also notice that, for the proof of Theorem \ref{thm:NonSharpRegularity}, our general strategy was inspired by the work of Chang-Lara and Savin \cite{ChangLaraSavin:BoundaryRegularityOnePhase}, although the key technical steps are different. 
\end{remark}
\begin{remark}
Using the approach from \cite{ChangLaraSavin:BoundaryRegularityOnePhase} one can show that, when $\alpha\ge 1/2$, Theorem \ref{thm:NonSharpRegularity} holds with $\beta=1/2$. This, in particular, means that the sharp regularity of the free boundary in Theorem \ref{t:main-main} is $C^{1,1/2}$.
\end{remark}
\subsection*{Plan of the paper}
The sections \ref{section:preliminaries}, \ref{section:blowUp}, \ref{section:BoundaryHarnack}, \ref{section:ImprovFlatness}, and \ref{section:Regularity} are dedicated to the proof of Theorem \ref{thm:NonSharpRegularity}. Precisely:
\begin{itemize}
\item In Section \ref{section:preliminaries} we prove the Lipschitz continuity and the nondegeneracy of $u$.
\item In Section \ref{section:blowUp}, we study the blow-up limits of $u$ at free boundary points lying on the hyperplane $\{x_N=0\}$. In the same section, we show that the variational solutions to \eqref{eqn:MainPb} are also viscosity solutions to \eqref{eqn:MainPb}.
\item In Section \ref{section:BoundaryHarnack} we prove a Partial Harnack Inequality for viscosity solutions of \eqref{eqn:MainPb}. 
\item In Section \ref{section:ImprovFlatness} we prove an improvement-of-flatness theorem for flat viscosity solutions of \eqref{eqn:MainPb}. 
\item In Section \ref{section:Regularity} we conclude the proof of Theorem \ref{thm:NonSharpRegularity}.
\end{itemize}
In Section \ref{section:main-main} we prove Theorem \ref{t:main-main}, while Section \ref{section:main-main-2} is dedicated to the proof of Theorem \ref{t:main-main-2}.\medskip

\noindent Finally, in Section \ref{section:applications}, we use Theorem \ref{thm:NonSharpRegularity} to answer a question left open in \cite{BucurButtazzoVelichkov:ShapeOptimizInternalContraint} on the regularity of the solutions to a shape optimization problem with internal constraint.

\section{Preliminaries}\label{section:preliminaries}

In the following, by saying that the origin is a point of detachment for the free boundary, we mean that
\begin{align} \label{eqn:0DetachmentCond}
\begin{split}
& 0 \in \partial \{ u>0 \} \cap \{ x_N = 0 \} \cap B_1, \\
\forall \, r & >0 \, \exists \, x \in B_r : x \in \partial \{ u>0 \}\cap B_r^{-}, \\
\forall \, r>0 \, & \exists \, x \in B_r : x \in \partial \{ u>0 \} \cap \{ x_N = 0 \} \cap B_r.
\end{split}
\end{align}
Throughout the paper, without loss of generality, we assume the following additional hypotheses on the coefficients of $A$, on the weight $Q$ and on the right-hand side $f$:
\begin{align}
& \quad Q(0)=1, \quad a^{ij}(0) = \delta^{ij}, \label{eqn:SmallnessCond1}\\
\| a^{ij}(x) - & \delta^{ij} \|_{C^{0, \alpha}\left( \overline{B_1} \right)} + \|Q(x) - 1 \|_{C^{0, \alpha}\left( \overline{B_1} \right)} + \| f \|_{L^{\infty}\left( \overline{B_1} \right)} \le \varepsilon \delta, \label{eqn:SmallnessCond2}
\end{align}
for some $\varepsilon, \delta > 0$ sufficiently small.\medskip

A key result for the subsequent analysis is the following

\begin{lemma}\label{lemma:linearBehavior}
Let $L = L(x)$ be a uniformly elliptic operator in $B_1^+$, such that

\begin{equation}\label{eqn:HypLinBehavAtBd}
L(x)u = \diverg(A(x) \nabla u), \quad [A(x)]^{ij} = a^{ij}(x) \in C^{0, \alpha}\left( \overline{B_1} \right), \quad \| a^{ij}(x) - \delta^{ij} \|_{C^{0, \alpha}\left( \overline{B_1} \right)} \le \varepsilon ,
\end{equation}
and let $u \in H^1(B_1)$ be a nonnegative continuous function in $B_1$ such that $u>0$ in $B_1^+$, $Lu=0$ in $\{u>0\}$, and $u(0)=0$. Then, there exist constants $\overline{\varepsilon}>0$ and $\theta > 0$ such that, for any $0 < \varepsilon < \overline{\varepsilon}$ one the following properties holds in $B_1^+$:

\begin{itemize}

\item[(i)] $u(x)$ grows more than any linear function in the N-th direction at $x=0$, that is, 
$$\lim_{\begin{array}{cc}|x|\to0\\  x_N\ge \lambda|x|\end{array}}\frac{u(x)}{|x|}=+\infty\quad\text{for every}\quad \lambda\in(0,1).$$

\item[(ii)] $u(x) \ge \theta (1+o(1)) x_N$, and equality holds along any non-tangential direction to the plane $\{ x_N = 0 \}$, that is, for every $\lambda\in(0,1)$ we have 
$$u(x)=\theta(1+o(1)) x_N\quad\text{in the set}\quad \{x_N\ge \lambda|x|\},$$
where the quantity $o(1)$ is intended for $\vert x \vert \to 0$.

\end{itemize}

\end{lemma}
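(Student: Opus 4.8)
The plan is to study the behavior at the origin through a blow-up/dichotomy argument combined with the boundary Harnack principle for $L$-harmonic functions on the half-ball. First I would observe that, since $u \ge 0$, $u(0)=0$, $u>0$ in $B_1^+$, and $Lu=0$ in $\{u>0\} \supseteq B_1^+$, the function $u$ restricted to a slightly smaller half-ball $B_\rho^+$ is a nonnegative $L$-harmonic function vanishing at the boundary point $0 \in \{x_N=0\}$. By the boundary Harnack inequality (and interior Harnack), for $L$-harmonic functions on $B_\rho^+$ vanishing continuously on $\{x_N=0\}\cap B_\rho$ — here one uses that the coefficients $a^{ij}$ are $C^{0,\alpha}$ and $\varepsilon$-close to $\delta^{ij}$, so $L$ is a small perturbation of the Laplacian and the standard boundary Harnack / Schauder theory applies — the quotient $u(x)/d(x)$, where $d(x)=\dist(x,\{x_N=0\})=x_N$, is comparable to $u(\rho e_N/2)/(\rho/2)$ on $B_{\rho/2}^+ \cap \{x_N \ge \lambda|x|\}$ up to a constant depending only on $N$, $c$, $\lambda$. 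Hence along any fixed non-tangential cone the ratio $u(x)/x_N$ stays trapped between two positive constants as $|x|\to 0$; the real content is to show it actually \emph{converges}, and to set up the dichotomy with case (i).

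The key step is a monotonicity-type control of the rescalings $u_r(x) := u(rx)/r$. Using the Lipschitz bound and nondegeneracy proved in Section \ref{section:preliminaries} (or, at this preliminary stage, just the a priori $L$-harmonic estimates), the family $\{u_r\}$ is precompact in $C^0_{\loc}(\overline{B_1^+})$, and every blow-up limit $u_0$ is a nonnegative $L_0$-harmonic function on $\{x_N>0\}$ with $L_0 = \Delta$ (since $a^{ij}(0)=\delta^{ij}$), vanishing on $\{x_N=0\}$, and with linear growth at infinity. The classical Liouville-type classification then forces $u_0(x) = \theta_0\, x_N$ for some $\theta_0 \ge 0$ (here I use that a nonnegative harmonic function on a half-space vanishing on the boundary hyperplane and growing at most linearly must be $c\, x_N$). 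The dichotomy is: either $\theta_0 = +\infty$ in the sense that $\limsup_{r\to 0}\, \|u_r\|_{L^\infty(B_1^+)}/\,\text{(Lipschitz seminorm normalization)} = \infty$ — which after unpacking gives exactly statement (i), super-linear growth along $\{x_N \ge \lambda|x|\}$ — or the rescalings stay bounded, all blow-up limits are of the form $\theta\, x_N$, and one must upgrade "every subsequential limit is linear" to "the limit exists and is unique", i.e. $\theta$ is well-defined independent of the subsequence. For this I would use a boundary Harnack argument again: the oscillation of $u(x)/x_N$ on the non-tangential cone $\{x_N \ge \lambda |x|\} \cap (B_{2r}^+\setminus B_r^+)$ decays like $r^\alpha$ (by Schauder estimates for $u/x_N$, which satisfies a nice elliptic equation away from the free boundary, combined with the $C^{0,\alpha}$ regularity of $A$), hence the limit $\theta := \lim_{x\to 0,\ x_N\ge\lambda|x|} u(x)/x_N$ exists; this gives (ii), and the bound $u(x)\ge \theta(1+o(1))x_N$ in all of $B_1^+$ (not just the cone) follows from the boundary Harnack comparison $u(x)\ge c\, x_N\, u(re_N/2)/(r/2)$ pushed to the limit. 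The constant $\theta>0$ is strictly positive because of nondegeneracy (if $\theta=0$ the rescalings would converge to $0$, contradicting the nondegeneracy lower bound at free boundary points).

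The main obstacle I expect is making the dichotomy clean: ruling out intermediate behavior where $u_r$ neither blows up nor has a well-defined finite linear limit — e.g. oscillating between different slopes $\theta$. The resolution is a quantitative rate: either establish an almost-monotonicity of $r\mapsto \fint_{\partial B_r^+} u^2$ (an Almgren-Weiss type quantity adapted to this half-space problem), or, more elementarily in this perturbative regime, exploit that $w := u/x_N$ extends to a solution of a uniformly elliptic equation $\diverg(x_N^2 \tilde A \nabla w)=0$ in $B_\rho^+$ (a degenerate-weighted but well-understood equation, in the Fabes–Kenig–Serapioni class) with $C^{0,\alpha}$ coefficients, for which $w$ is $C^{0,\alpha'}$ up to $\{x_N=0\}$ — giving both the existence of $w(0)=\theta$ and the modulus $o(1)$ directly. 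A secondary technical point is handling the set $\{u>0\}$ possibly extending strictly below $\{x_N=0\}$: but since the statement only concerns behavior in $B_1^+$ where $Lu=0$ unconditionally, this does not interfere, and the two cases (i)/(ii) are precisely distinguished by whether this harmonic continuation from above has infinite or finite normal derivative at $0$.
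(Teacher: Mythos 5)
Your high-level strategy (a blow-up dichotomy: either the rescalings blow up, giving (i), or they converge to a half-plane profile $\theta\,x_N$, giving (ii)) is sensible, but there is a genuine gap in the step where you upgrade ``every subsequential blow-up is linear'' to ``the slope $\theta$ is well-defined.'' Both of the mechanisms you propose for this -- Schauder decay of $\operatorname{osc}(u/x_N)$ at rate $r^\alpha$, or the degenerate-elliptic equation $\diverg(x_N^2\widetilde A\,\nabla w)=0$ for $w=u/x_N$ -- require $u$ to vanish on $\{x_N=0\}$ in a neighborhood of the origin, which is \emph{not} a hypothesis of the lemma: $u$ is only assumed nonnegative in $B_1$ with $u(0)=0$, and in the intended application $u$ may well be strictly positive on $\{x_N=0\}\setminus\{0\}$. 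In that situation $w=u/x_N$ is unbounded near $\{x_N=0\}$, the weighted Schauder/Fabes--Kenig--Serapioni machinery does not apply, and the $r^\alpha$ oscillation decay fails. You flag exactly this issue at the end and then dismiss it by saying that only the behavior in $B_1^+$ matters; but the behavior in $B_1^+$ near a boundary point is governed precisely by the boundary data, and the point is that the quotient $u/x_N$ is not controlled there. The same problem afflicts your Liouville step: a blow-up limit is a nonnegative harmonic function on $\{x_N>0\}$ of linear growth, but there is no reason it should vanish identically on $\{x_N=0\}$, so the conclusion $u_0=\theta_0 x_N$ does not follow. Finally, invoking nondegeneracy to get $\theta>0$ is off target -- nondegeneracy is not a hypothesis of this lemma -- whereas in fact $\theta>0$ is forced by the strong maximum principle once one has set up the right comparison.

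The paper's proof sidesteps all of this by working one-sidedly. It builds an $L$-harmonic barrier $h$ on a smoothed rectangular annulus $R\subset B_1^+$ whose inner side touches $\{x_N=0\}$ near $0$, shows via a perturbation of the Laplace case (Hopf plus Schauder on $h-v$, where $v$ is the corresponding harmonic barrier) that $h(x)=\beta(1+o(1))\,x_N$ with $\beta>0$, and then considers $\alpha_r=\sup\{c:u\ge ch\text{ in }B_r^+\}$. Since only the inequality $u\ge 0$ on $\partial R_1$ is needed, the barrier comparison is unaffected by what $u$ does on $\{x_N=0\}$. The dichotomy is then $\alpha=\sup_r\alpha_r=\infty$ (case (i)) or $\alpha<\infty$, and the non-tangential equality $u=\alpha\beta(1+o(1))x_N$ in case (ii) is obtained by the Carleson/comparison principle of Caffarelli--Salsa applied to the nonnegative $L$-harmonic function $u-\alpha_{2r_k}h$, yielding a contradiction with the maximality of $\alpha$ if there were an excess along $e_N$. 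If you want to salvage a blow-up-based proof, you would essentially have to reintroduce this one-sided barrier-and-comparison structure to obtain compactness and uniqueness -- at which point the two arguments coincide.
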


\begin{proof}
We basically adapt the proof of \cite[Lemma 11.17]{CaffarelliSalsa:GeomApproachToFreeBoundary} to the case of elliptic operators in divergence form with $C^{0, \alpha}$ coefficients.

To begin with, consider a rectangular annulus $R \subseteq B_1^+$ with smoothed vertices so that its boundary is of class $C^{\infty}$, with one of the sides parallel to the plane $x_N = 0$ and touching it in an open subset containing the origin (see Figure \ref{fig:LinearBehaviorDomainFig}). Let us denote $\partial R_1, \partial R_2$ its outer and inner boundaries, respectively. Then, let us consider the functions $v, h$ defined in the following way:

\begin{align}\label{eqn:LinBehavCompetitDef}
\begin{split}
& \Delta v = 0 \text{ in } R, \quad v = 0 \text{ on } \partial R_1, \quad v = 1 \text{ on } \partial R_2, \\
& L h = 0 \text{ in } R, \quad h = 0 \text{ on } \partial R_1, \quad h = 1 \text{ on } \partial R_2.
\end{split}
\end{align}

By the elliptic regularity for the laplace operator and the smoothness of $R$, it holds that $v \in C^{\infty}(\overline{R})$. Thus, it can be extended to a smooth function over the whole $\R^N$, and at $x = 0$ it has an asymptitic expansion of the form

\begin{equation}\label{eqn:xNHarmBarrierAsymptExpansion}
v(x) = \gamma (1+o(1)) x_N \quad \text{as } \vert x \vert \to 0,
\end{equation}
where $\gamma > 0$ by Hopf's lemma.

Now we turn to the function $h$. By elliptic regularity (e.g. \cite[Theorem 8.34]{GilbargTrudinger:Elliptic98}) $h \in C^{1, \alpha}\left( \overline{R} \right)$. Hence, the function $z \coloneqq h - v$ belongs to $C^{1, \alpha}\left( \overline{R} \right)$, and solves

\[
Lz = \diverg \left( F \right) \quad \text{in } R, \quad \text{with } F \coloneqq (I - A) \nabla v,
\]
and by \eqref{eqn:HypLinBehavAtBd} it holds that $\| F \|_{C^{0, \alpha}\left( \overline{R} \right)} \to 0$ as $\varepsilon \to 0$. Using the Schauder estimates, this gives $\| z \|_{C^{1, \alpha}\left( \overline{R} \right)} \to 0$ for $\varepsilon \to 0$.

Using \eqref{eqn:xNHarmBarrierAsymptExpansion} and the fact that $z = 0$ on $\{ x_N = 0 \}$ sufficiently near the origin, the above convergence property implies that, if $\varepsilon < \overline{\varepsilon}$ sufficiently small, the function $h$ satisfies
\begin{equation}\label{eqn:xNBarrierAsymptExpansion}
    h(x) = \beta (1+o(1)) x_N  \quad \text{as } \vert x \vert \to 0
\end{equation}
for some $\beta > 0$.

Now we actually proceed to prove $(i)$ and $(ii)$. Let $r_0$ be such that $B_r^+ \subset R$ for all $0<r<r_0$. Then, consider the constants $\alpha_r$ defined as
\[
\alpha_r \coloneqq \sup \{c \in \R^+ \, : \, u(x) \ge c h(x) \text{ in } B_r^+ \}, \quad 0<r<r_0\,,
\]
which are non-increasing in $r$, thus strictly positive by the strong maximum principle and the fact that $h = 0$ on $\partial R_1$. Now define 
\[
\alpha \coloneqq \sup_{0<r<r_0} \alpha_r.
\]

If $\alpha = +\infty$ then $(i)$ is proved, thus we are left to treat the case $\alpha < +\infty$. In such case,

\[
u - \alpha h \ge \frac{1}{\alpha}(\alpha_r - \alpha)  \alpha h \quad \text{in } B_r,
\]
so that \eqref{eqn:xNBarrierAsymptExpansion} implies that

\begin{equation}\label{eqn:xNuAsymptExpansion}
    u(x) \ge \alpha \beta (1+o(1)) x_N  \quad \text{as } \vert x \vert \to 0,
\end{equation}
which gives the first part of $(ii)$. In order to prove that equality holds, we proceed by contradiction. We only treat the case of the direction $e_N$ since, for any other non-tangential direction $\nu$, the result follows in a similar fashion, by composing the analogous of \eqref{eqn:CondForCompPrinciple} below with a smooth diffeomorphism of $B_1^+$ into itself, and sending the direction $\nu$ to $e_N$. 

Suppose that there exists a sequence $x_{N_k}$ and a constant $\delta_0 > 0$ such that
\[
u\left(x_{N_k}\right) - \alpha \beta x_{N_k} \ge  \delta_0 \alpha \beta x_{N_k},
\]
or equivalently (by \eqref{eqn:xNBarrierAsymptExpansion})
\[
u\left(x_{N_k}\right) - \alpha h\left(x_{N_k}\right) \ge  \delta_0 \alpha \beta x_{N_k}.
\]
By the Schauder estimates up to the boundary, we can replace $x_N$ by the function $w$ defined by (actually we should consider a smooth subdomain of $B_1^+$, but we avoid making the notation more complex)
\[
Lw = 0 \text{ in } B_1^+, \quad w = x_N \text{ on } \partial B_1^+
\]
and, up to redefining $\delta_0$, we would still get
\[
u\left(x_{N_k}\right) - \alpha h\left(x_{N_k}\right) \ge  \delta_0 \alpha \beta w\left(x_{N_k}\right).
\]
Now, For each k such that $r_k \coloneqq \vert x_{N_k} \vert \le r_0/2$, the following holds
\begin{align}\label{eqn:CondForCompPrinciple}
\begin{split}
& u(x) - \alpha_{2r_k} h(x) \ge  0 \quad \text{in } B_{2r_k} , \\
& u\left(x_{N_k}\right) - \alpha_{2r_k} h\left(x_{N_k}\right) \ge  \delta_0 \alpha \beta w\left(x_{N_k}\right).
\end{split}
\end{align}
At this point we can apply the comparison principle \cite[Theorem 11.6]{CaffarelliSalsa:GeomApproachToFreeBoundary} (notice that, for the bound from above, it is not important whether the function at the denominator vanishes at $\{ x_N = 0 \}$ or not) to deduce the existence of a constant $c_0$ such that
\[
u(x) - \alpha_{2r_k} h(x) \ge  c_0 \delta_0 \alpha \beta w\left(x\right) \quad \text{in } B_{r_k},
\]
or equivalently
\[
u(x) - \alpha_{2r_k} h(x) \ge  c_0 \delta_0 h(x) \quad \text{in } B_{r_k},
\]
but by \eqref{eqn:xNBarrierAsymptExpansion} this leads to a contradiction on the definition of $\alpha$, for $k$ sufficiently large. This concludes the proof.
\end{proof}

\begin{figure}[t]
\centering

\begin{tikzpicture}
\draw[thick] (-4,0) arc [start angle = 180, end angle = 0,
x radius = 40mm, y radius = 40mm];
\draw[thick] (-4,0) -- (4,0); 
\draw[thick, rounded corners, fill=blue!10] (-1.5, 0) rectangle (1.5, 3) {};
\draw[thick, rounded corners, fill=white] (-0.5, 1) rectangle (0.5, 2) {};
\draw node at (1.1,3.2) {$\partial R_1$};
\draw node at (0.5,2.2) {$\partial R_2$};
\draw node at (-3,1) {$B_1^+$};
\draw[thick, dashed, ->] (0,0) -- (0,4.5);
\draw node at (0.4,4.3) {$x_N$};
\end{tikzpicture}
\caption{A sketch of the domain used in the proof of Lemma \ref{lemma:linearBehavior}. \label{fig:LinearBehaviorDomainFig}}
\end{figure}
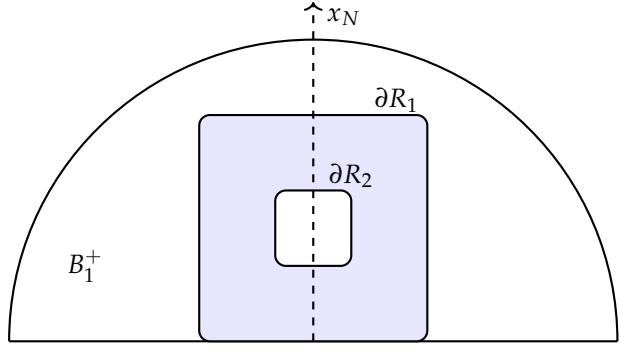

\begin{remark}\label{rmk:BoundBelowfNotPositive}
In the case $Lu=f$ with $f \in L^{\infty}(B_1)$ and $f \le 0$, a bound from below as in point (ii) can still be obtained comparing $u$ with its $L$-harmonic extension near the origin.
\end{remark}

\begin{remark}\label{rmk:scalingHypLinBehavAtBd}
The hypotheses \eqref{eqn:HypLinBehavAtBd} of Lemma \ref{section:preliminaries}, are satisfied by \emph{any} uniform elliptic operator in divergence form with coefficients of class $C^{0, \alpha}$ and satisfying $\quad a^{ij}(0) = \delta^{ij}$, provided one increases sufficiently the scale near the origin. Namely, if the smallness assumptions are substituted by the condition $a^{ij}(0) = \delta^{ij}$, the previous lemma still applies at sufficiently (but finite) small scales near the origin. 
\end{remark}

\begin{remark}\label{rmk:intBallCond}
Suppose that in place of $\overline{B_1^+}$ conditions \eqref{eqn:HypLinBehavAtBd} hold on the closure of an open set $\Omega$ enjoying an interior ball condition, say $B_R(y_0)$ touching $\overline{\Omega}$ at the origin, and let $\nu = e_N$ be the inner normal to such ball at the touching point. Let us also denote with $\Psi(x): B_r \subseteq R^{N-1} \to \R^+$ be a local chart of $\partial B_R(y_0)$ near the origin, with respect to coordinates lying in a plane orthogonal to $\nu$ and such that $\Psi(0) = 0$. Then, extend $\Psi$ constantly along directions parallel to $\nu$, in a whole neighborhood of the origin. After a flattening of $\partial B_R(y_0)$ near the origin, an application of Lemma \ref{lemma:linearBehavior} gives that either $u(x)$ grows more then any linear function, or there exists $\alpha>0$ such that
\[
u(x) \ge \alpha (1+o(1)) \left( x_N - \Psi(x) \right) \quad \text{in } B_R(y_0) \text{ and for } \vert x \vert \to 0, 
\]
with equality holding in any non-tangential direction to $\partial B_R(y_0)$ at the origin. In particular, this implies that
\[
u(x) \ge \alpha x_N + o\left( \vert x \vert \right) \quad \text{in } B_R(y_0) \text{ and for } \vert x \vert \to 0, 
\]
with equality in any non-tangential direction.
\end{remark}

\begin{remark}\label{rmk:intC1aCond}
Analogous considerations to Remark \ref{rmk:intBallCond} still hold true if the inner touching domain is not necessarily a ball but is at least of class $C^{1, \alpha}$.
\end{remark}

Standard properties of variational solutions to problem \eqref{eqn:MainPb} are boundedness, local Lipschitz regularity and non-degeneracy.
\begin{lemma}\label{lemma:LocalBound}
    Let $u$ be a variational solution to \eqref{eqn:MainPb}. Then $u \in L^{\infty}_{\loc}\left( B_1 \right)$.
\end{lemma}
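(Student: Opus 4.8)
The plan is to show that a variational solution $u$ is locally bounded by a combination of a comparison/maximum-principle argument and the standard De Giorgi--Nash--Moser machinery, exploiting the fact that $u$ is $L$-subharmonic (up to the bounded right-hand side $f$) across the whole ball, not just inside $\{u>0\}$. First I would fix a ball $B_{2r}(x_0)\Subset B_1$ and consider a competitor obtained by replacing $u$ on $B_{2r}(x_0)$ with the $L$-subsolution that agrees with $u$ on $\partial B_{2r}(x_0)$; more precisely, I would argue that the truncation $v:=\min\{u,M\}$ for large $M$, or rather the function whose positivity set is not shrunk, is an admissible competitor in \eqref{eqn:VariationalSOlution} because lowering $u$ on the set where it is large does not destroy the constraint $B_1^+\subseteq\{v>0\}\cap B_1$ as long as $M>0$. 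Comparing energies $\mathcal F(u)\le\mathcal F(v)$ and using that $Q^2$ is bounded and $f\in L^\infty$, the Dirichlet-energy terms force $\int|\nabla(u-M)^+|^2$ to be controlled, which is the first step toward a Caccioppoli inequality for the positive parts $(u-k)^+$.

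Next I would establish that $u$ is a weak subsolution of $Lu=f$ in all of $B_1$ in the sense $\int A\nabla u\cdot\nabla\varphi\le -\int f\varphi$ for all $\varphi\ge 0$, $\varphi\in C_c^\infty(B_1)$. Inside $\{u>0\}$ this is the PDE $Lu=f$; the point is that across $\partial\{u>0\}$ no positive measure is created because the free boundary condition $|D_a u|=Q$ (rather than a one-sided jump inequality with the wrong sign) makes $u$ subharmonic-type across the boundary --- intuitively the normal derivative of $u$ from the positive side is nonnegative, so the distributional Laplacian-type operator applied to $u$ is $\le$ (negative measure) $+ f$. This is exactly the kind of statement proved for minimizers of the Alt--Caffarelli functional, and here I would either invoke it by the same competitor argument (perturbing $u$ downward: $u-t\varphi$ for small $t>0$ and $\varphi\ge 0$ is admissible since it only shrinks $\{u>0\}$ away from $B_1^+$, but we must keep $B_1^+\subseteq\{u-t\varphi>0\}$, which holds for small $t$ if $\varphi$ is supported away from, or handled near, the interface) or cite the analogous lemma from \cite{Velichkov:RegularityOnePhaseFreeBd}. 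Once $Lu\le f$ weakly in $B_1$ with $f\in L^\infty$, the local boundedness $u\in L^\infty_{\loc}(B_1)$ is the classical local maximum principle for subsolutions of uniformly elliptic divergence-form equations (e.g.\ \cite[Theorem 8.17]{GilbargTrudinger:Elliptic98}), giving $\sup_{B_r(x_0)}u\le C(\|u\|_{L^2(B_{2r}(x_0))}+r\|f\|_{L^\infty})$.

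The one subtlety I would be careful about is the boundary datum and the behaviour near $\partial B_1$: since the statement only claims $u\in L^\infty_{\loc}(B_1)$, I do not need to control $u$ up to $\partial B_1$, so it suffices to work on compactly contained balls and the interior estimate above is enough; I would also note $u\in H^1(B_1)$ by hypothesis, so $u\in L^2_{\loc}$ and the right-hand side of the Moser estimate is finite. The main obstacle --- really the only non-routine point --- is justifying rigorously that the variational inequality \eqref{eqn:VariationalSOlution} forces $Lu\le f$ in the distributional sense on the whole ball, because the admissible competitors are constrained ($B_1^+\subseteq\{v>0\}$), so one cannot freely perturb $u$ near points of $\partial\{u>0\}\cap\{x_N=0\}$; I would handle this by choosing test perturbations $\varphi$ supported either in $\{x_N<0\}$, where $u-t\varphi$ still satisfies the constraint for small $t$, or by an approximation/cutoff argument near $\{x_N=0\}$, and observe that the local maximum principle only needs the subsolution property on a slightly smaller ball, which can always be arranged. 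Everything else is the standard Caccioppoli-plus-Moser iteration, which I would only sketch.
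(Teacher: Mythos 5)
Your approach is essentially the paper's: show that $\diverg(A\nabla u)\ge f$ holds distributionally on all of $B_1$ (the paper cites the analogous \cite[Lemma 2.7]{Velichkov:RegularityOnePhaseFreeBd} for this step) and then invoke local boundedness of $L$-subsolutions with bounded right-hand side — the paper packages this last step by adding a corrector $v$ so that $u+v$ is $L$-subharmonic and applying the maximum principle, while you cite \cite[Theorem 8.17]{GilbargTrudinger:Elliptic98} directly, which is equivalent. A small simplification worth noting: the obstruction you worry about from the constraint $B_1^+\subseteq\{v>0\}$ is immaterial for this lemma, since the natural competitor yielding the subsolution inequality is the $L$-replacement $h$ of $u$ on a small ball ($Lh=f$ in $B_r$, $h=u$ on $\partial B_r$), and $h\ge u\ge 0$ with $h>0$ in $B_r$ by the strong maximum principle, so $\{h>0\}\supseteq B_1^+$ automatically and no delicate choice of test perturbation near $\{x_N=0\}$ is needed.
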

\begin{proof}
    Similarly as in \cite[Lemma 2.7]{Velichkov:RegularityOnePhaseFreeBd}, one can prove that $\diverg\left( A(x) \nabla u \right) \ge f$ in $B_1$ in the sense of distributions. Choose $B_R \subset \subset \Omega$ and let $v \in H^1_0\left( B_R \right)$ be a solution of 
    $$\diverg\left( A(x) \nabla v \right) = -\| f \|_{L^{\infty}(B_1)}\,.$$ Then, the function $u + v$ is $L$-subharmonic in $B_R$, thus by the maximum principle it is uniformly bounded in $B_{R/2}$.
\end{proof}
\begin{lemma}\label{lemma:LocalLipschitz}
    Let $u$ be a variational solution to \eqref{eqn:MainPb}. Then $u \in Lip(\Omega')$ for all $\Omega' \Subset B_1$. 
\end{lemma}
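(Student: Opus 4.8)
The plan is to prove local Lipschitz continuity of variational solutions $u$ to \eqref{eqn:MainPb} by combining the standard interior theory for the two regions $B_1^+$ and $\{u>0\}\cap B_1^-$ with the boundary estimate supplied by Lemma \ref{lemma:linearBehavior} (and its Remarks \ref{rmk:intBallCond}--\ref{rmk:intC1aCond}). First I would reduce everything to a suitable ball $B_{r_0}(x_0)$ around an arbitrary point $x_0\in\partial\{u>0\}\cap\Omega'$, since away from the free boundary the function solves $Lu=f$ with $f\in L^\infty$ and is therefore $C^{1,\alpha}_{\loc}$ (Schauder), and near an interior free boundary point (one with $x_N<0$) the classical Alt--Caffarelli/De Silva/Velichkov Lipschitz bound for $|D_a u|=Q$ applies after freezing coefficients. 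So the only genuinely new case is a free boundary point on $\{x_N=0\}$, and there one must bound $|\nabla u|$ from above in a neighborhood.

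Second, at a free boundary point $x_0\in\partial\{u>0\}\cap\{x_N=0\}$ the key is the following dichotomy coming from Lemma \ref{lemma:linearBehavior}: applied to a blow-up (or directly, after rescaling so that \eqref{eqn:HypLinBehavAtBd} holds, cf. Remark \ref{rmk:scalingHypLinBehavAtBd}) the $L$-harmonic function $u$ on $\{u>0\}\cap B_1^-$ with $u>0$ in $B_1^+$ must either grow faster than linear — which I will rule out by an upper barrier, namely by comparing $u$ with an $L$-harmonic function that is $O(|x|)$ on a shrinking sphere, using the $L^\infty$ bound from Lemma \ref{lemma:LocalBound} and the non-positivity of $f$ on $B_1^+$ (Remark \ref{rmk:BoundBelowfNotPositive} gives the matching lower bound, so the solution is genuinely linear there) — or else $u(x)\le C\,x_N^+ + o(|x|)$ along non-tangential directions, which already encodes a one-sided Lipschitz bound from the half-space side. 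To upgrade this into a full gradient bound near $x_0$ I would use an interior-ball/interior-$C^{1,\alpha}$-domain argument at generic free boundary points near $x_0$: at any $y\in\partial\{u>0\}$ close to $x_0$, either $y_N<0$ and we are in the interior free boundary case, or $y$ lies essentially on $\{x_N=0\}$ and the positivity set contains a large half-ball, so Remark \ref{rmk:intBallCond} (or \ref{rmk:intC1aCond}) gives a touching ball from inside $\{u>0\}$ and hence $u(y+z)\le C|z| + o(|z|)$; a standard covering/iteration then yields $\sup_{B_{r}(x_0)}|\nabla u|\le C\, r^{-1}\sup_{B_{2r}(x_0)} u$, i.e. Lipschitz continuity.

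Concretely I would organize the proof as: (1) show $u$ is $L$-subharmonic and bounded (Lemma \ref{lemma:LocalBound}), so $\|u\|_{L^\infty(B_{1/2})}\le C$; (2) observe $Lu=f$ in $\{u>0\}$ gives interior $C^{1,\alpha}$ (Schauder) away from $\partial\{u>0\}$, with $|\nabla u|\le C\,d(x,\partial\{u>0\})^{-1}\|u\|_{L^\infty}$ by the usual gradient estimate for subsolutions; (3) at a free boundary point with $x_N<0$, invoke the classical one-phase Lipschitz estimate (e.g. \cite{Velichkov:RegularityOnePhaseFreeBd,AltCaffarelli:OnePhaseFreeBd}), valid for divergence-form operators with Hölder coefficients after the usual freezing; (4) at a free boundary point on $\{x_N=0\}$, use Lemma \ref{lemma:linearBehavior} together with an upper barrier (to exclude superlinear growth) and Remarks \ref{rmk:intBallCond}--\ref{rmk:intC1aCond} (to get the one-sided linear bound from the positivity set) to conclude $\limsup_{x\to x_0}u(x)/|x-x_0|<\infty$; (5) combine (2)--(4) via a Harnack-type covering argument to pass from the pointwise bounds $u(x)\le C|x-x_0|$ on $\partial\{u>0\}$ to the gradient bound in $\Omega'$. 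The main obstacle I expect is step (4)--(5): ruling out superlinear growth at contact points on $\{x_N=0\}$ requires building the right upper barrier compatible with the constraint $B_1^+\subseteq\{u>0\}$, and then transferring the non-tangential linear bound of Lemma \ref{lemma:linearBehavior} into a genuine Lipschitz bound uniform over a neighborhood (rather than just at the single point) — this is where one must be careful that the ``$o(|x|)$'' and the constants are controlled uniformly as the base point varies along the free boundary near $x_0$, for which the $C^{1,\alpha}$ flatness of the relevant touching domains and the uniform ellipticity are essential.
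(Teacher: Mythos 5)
Your proposal takes a genuinely different route from the paper, but it contains a gap that I don't think can be closed in the form you sketch. The paper's proof is short and entirely measure-theoretic: from minimality one gets that $u$ is a distributional $L$-subsolution, i.e.\ $\mu := \diverg(A\nabla u) - f$ is a nonnegative Radon measure supported on $\partial\{u>0\}$; the first-variation/outer-perturbation argument then yields the \emph{Laplacian estimate} $\mu(B_r(x_0)) \le C r^{N-1}$ for free boundary points $x_0$; combined with the representation formula for subsolutions and the zero boundary datum, this gives $u(x) \le C\,\dist(x,\partial\{u>0\})$, and the interior gradient estimate for $Lu = f$ then produces the Lipschitz bound. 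Nothing in this argument distinguishes between free boundary points with $x_N<0$ and those on $\{x_N=0\}$; the constraint $B_1^+ \subset \{u>0\}$ enters only passively.

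Your proposal instead tries to extract an upper linear bound at points of $\partial\{u>0\}\cap\{x_N=0\}$ from Lemma~\ref{lemma:linearBehavior} plus a barrier. This does not work as stated, for two reasons. First, Lemma~\ref{lemma:linearBehavior}(ii) only gives the \emph{lower} bound $u(x) \ge \theta(1+o(1))x_N$ and equality along non-tangential cones $\{x_N \ge \lambda|x|\}$; it says nothing about the size of $u$ in tangential directions, which is precisely where the Lipschitz control is needed at a contact point. Second, the barrier you propose to rule out the superlinear alternative (i) — comparing $u$ against an $L$-harmonic function of size $O(|x|)$ using only the $L^\infty$ bound — is not available: an $L$-harmonic function on $B_r(x_0)$ with boundary datum $\le M$ on $\partial B_r$ and with a single interior zero has no controlled gradient at that zero, and $u$ need not vanish on the flat part $\{x_N=0\}\cap B_r(x_0)$ (the positivity set can extend into $\{x_N<0\}$), so there is no one-sided Dirichlet datum to plug in. What actually kills the superlinear case, and simultaneously produces the upper linear growth bound in a full neighborhood, is the minimality of $u$ expressed through the measure bound $\mu(B_r)\le Cr^{N-1}$, not the dichotomy of Lemma~\ref{lemma:linearBehavior}. (That lemma is used in the paper for the blow-up classification in Section~\ref{section:blowUp}, not for the Lipschitz estimate.) You also correctly flag that even if the pointwise linear bound held at each contact point, promoting it to a uniform gradient bound would require uniformity of the implicit constants and of the $o(|x|)$ terms as the base point varies along $\partial\{u>0\}$, and the proposal does not supply that; the subharmonicity route sidesteps this entirely because the Laplacian estimate is automatically uniform in the base point.

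To fix the argument, I would suggest abandoning the case decomposition and the barrier at $\{x_N=0\}$ and proving instead that $\diverg(A\nabla u) \ge f$ in $\mathcal{D}'(B_1)$ (the computation is exactly as for the unconstrained one-phase problem, since adding a nonnegative test function to $u$ keeps the competitor admissible: it enlarges $\{v>0\}$ and so respects the constraint $B_1^+\subset\{v>0\}$), then deriving the bound on $\mu(B_r(x_0))$ for $x_0 \in \partial\{u>0\}\cap\Omega'$ from the optimality, and finally invoking the standard gradient estimate for bounded $L$-subsolutions with measure-bounded Laplacian, as in the references cited in the paper's proof.
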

\begin{proof}
    The proof of the Lipschitz regularity follows from the sub-harmonicity of $u$ (see for instance \cite[Lemma 2.7]{Velichkov:RegularityOnePhaseFreeBd}) which allows a laplacian estimate (see e.g. \cite[Lemma 3.9]{Velichkov:RegularityOnePhaseFreeBd}), and a standard pointwise estimate of the gradient (e.g. \cite[Lemma 3.5]{Velichkov:RegularityOnePhaseFreeBd}).
\end{proof}
\begin{lemma}\label{lemma:LocalNonDegenerate}
    Let $u$ be a variational solution to \eqref{eqn:MainPb}. Then, there exists a constant $c>0$ such that
    \[
    \|u\|_{L^{\infty}\left( B_r(x_0) \right)} \ge cr
    \]
    for all $x_0 \in \overline{\Omega^+(u)} \cap B_{1/2}(0)$ and $r>0$ such that $B_r(x_0) \subset B_1(0)$.
\end{lemma}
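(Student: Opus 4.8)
The plan is to prove the nondegeneracy lemma by a standard comparison argument, contrasting $u$ with an $L$-harmonic function and exploiting the sharp condition $|D_a u| = Q \ge Q_{min}>0$ on the free boundary away from $\{x_N=0\}$, together with the sign condition $f\le 0$ on $B_1^+$. First I would reduce to the case of a free boundary point $x_0 \in \partial\{u>0\}$ and, by continuity of $u$, it suffices to prove the estimate for $x_0 \in \partial\{u>0\}\cap B_{1/2}$; then it propagates to $\overline{\Omega^+(u)}$. Fix such $x_0$ and $r>0$ with $B_r(x_0)\subset B_1$. The idea is to suppose, for contradiction, that $\sup_{B_r(x_0)} u \le \eta r$ for some small $\eta$ to be chosen, and to derive a contradiction with the free boundary condition.

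The key step is the following dichotomy depending on the location of $x_0$ relative to the hyperplane. If a definite portion of $B_{r/2}(x_0)$ lies in $B_1^-=\{x_N<0\}$ (or, more precisely, if we can find a ball $B_{\rho}(y)\subseteq B_{r}(x_0)\cap\{x_N<0\}$ with $\rho \sim r$ and $x_0\in\partial B_\rho(y)$ — which can always be arranged if $x_0$ is not too close to $\{x_N=0\}$, or after noting that the relevant geometric configuration can be reduced to this case), then I would compare $u$ on $B_\rho(y)$ with the solution $w$ of $Lw = f$ in $B_\rho(y)\setminus B_{\rho/2}(y)$, $w = \sup_{\partial B_\rho(y)} u$ on $\partial B_\rho(y)$, $w=0$ on $\partial B_{\rho/2}(y)$. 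By the maximum principle (using $f\le 0$, cf.\ Remark \ref{rmk:BoundBelowfNotPositive}), and since $\{u>0\}\cap B_\rho(y)$ must stay away from a neighborhood of $y$ if $u$ is too small there, one gets $u \le w$ on the annulus; an explicit barrier computation for $L$-harmonic functions on annuli (comparable to the Laplacian by ellipticity \eqref{e:conditions-on-A-intro-2} and the smallness \eqref{eqn:SmallnessCond2}) shows $|\nabla w(x_0)| \le C\,\eta$. If $\eta$ is chosen small enough relative to $Q_{min}$ and the ellipticity constant, this contradicts $|D_a u(x_0)| = Q(x_0)\ge Q_{min}$ (which holds in the viscosity/variational sense along the free boundary portion in $B_1^-$, using that $u$ is Lipschitz and the blow-up analysis; at a free boundary point one has a well-defined linear growth rate equal to $Q$).

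The alternative branch is when $x_0$ is effectively on $\{x_N=0\}$, where only the inequality $|D_a u|\le Q$ is available and the comparison above does not yield a contradiction directly. Here I would instead use the interior structure: since $0\in B_1^+\subseteq\{u>0\}$ and $u$ is positive and $L$-superharmonic-from-above in $B_1^+$ (again via $f\le 0$ and subharmonicity, Lemma \ref{lemma:LocalBound}), Lemma \ref{lemma:linearBehavior} / Remark \ref{rmk:intBallCond} gives a \emph{lower} bound $u(x)\ge \theta x_N + o(|x|)$ along $B_1^+$; combined with Lipschitz continuity this forces $\sup_{B_r(x_0)\cap B_1^+} u \ge c r$ for a dimensional $c>0$ once $r$ is small, so the assumed smallness $\sup_{B_r(x_0)}u\le\eta r$ with $\eta<c$ is already impossible. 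Finally, I would pass from the smallness-for-contradiction formulation to the stated estimate $\|u\|_{L^\infty(B_r(x_0))}\ge cr$ with a uniform $c$ by a standard dyadic iteration (if it failed at scale $r$ it would fail at a comparable smaller scale, contradicting the fixed constant obtained above), and extend from $\partial\{u>0\}$ to $\overline{\Omega^+(u)}$ by continuity.

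The main obstacle I anticipate is handling the borderline geometry near $\{x_N=0\}$: one must argue carefully that either $x_0$ admits an interior touching ball contained in $B_1^-$ on which the sharp condition $|D_a u|=Q$ bites, or else $x_0$ is close enough to $\{x_N=0\}$ that the interior lower bound from Lemma \ref{lemma:linearBehavior} (via the detachment configuration \eqref{eqn:0DetachmentCond} and Remark \ref{rmk:intBallCond}) takes over; quantifying this dichotomy uniformly in the position of $x_0$, and making sure the comparison functions genuinely lie below $u$ on the relevant annuli when $u$ is assumed small, are the delicate points. The variable-coefficient operator and the $C^{0,\alpha}$ (not constant) weight $Q$ only enter through the ellipticity bounds \eqref{e:conditions-on-A-intro-2} and the smallness normalization \eqref{eqn:SmallnessCond1}--\eqref{eqn:SmallnessCond2}, so the barrier estimates are routine perturbations of the constant-coefficient, constant-weight case treated in \cite{Velichkov:RegularityOnePhaseFreeBd}.
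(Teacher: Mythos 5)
Your proposal diverges from the paper's route and, more importantly, has a genuine gap in the branch where $x_0$ is near the hyperplane $\{x_N=0\}$. You propose to invoke Lemma~\ref{lemma:linearBehavior} (via Remark~\ref{rmk:intBallCond}) to get a lower bound $u(x)\ge \theta x_N + o(|x|)$ near $x_0$ and to conclude $\sup_{B_r(x_0)}u\ge cr$ for a ``dimensional'' $c$. This does not work: the constant $\theta$ produced by Lemma~\ref{lemma:linearBehavior} depends on $u$ and on the base point (it is the supremum of the multipliers $\alpha_r$ in that proof, which can be arbitrarily small), and the $o(|x|)$ correction is a purely asymptotic statement whose modulus is not controlled uniformly in $x_0$. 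Applying the lemma at each $x_0\in\partial\{u>0\}\cap\{x_N=0\}$ gives a different, uncontrolled linear rate at each point, so it does not yield the required estimate ``$\|u\|_{L^\infty(B_r(x_0))}\ge cr$ for all $x_0$ and all admissible $r$'' with one constant $c$. The final ``dyadic iteration'' step does not repair this, since the contradiction argument you sketch already presupposes a uniform smallness constant $\eta$.

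There is also a circularity risk in your first branch: you want to contradict the pointwise free boundary condition $|D_a u(x_0)|=Q(x_0)\ge Q_{\min}$, but identifying a well-defined linear growth rate of $u$ at $x_0$ and matching it to $Q$ already uses nondegeneracy (via the blow-up analysis in Section~\ref{section:blowUp}). The paper avoids both issues. Away from $\overline{D}$ and at scales $r\le \dist(x_0,\overline D)$, the function $u$ is a local minimizer of the \emph{unconstrained} one-phase functional in $B_1^-$, so the nondegeneracy is the classical variational Alt--Caffarelli argument (harmonic replacement plus truncation), with a constant independent of $u$ and $x_0$. For larger scales, and for $x_0\in\overline D$, the paper exploits the hard constraint $B_1^+\subset\{u>0\}$ directly: since $Lu=f\le 0$ in $B_1^+$, one compares $u$ \emph{from below} with explicit $L$-harmonic barriers on annuli centered at a nearby point of $\{x_N=0\}$, obtaining a lower bound via the maximum principle rather than via an asymptotic expansion. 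This gives uniformity in $x_0$ and $r$ (with a constant that may depend on $u$ through its trace on $\partial B_1$), which is exactly what the statement requires and what your branch~(b) does not deliver.
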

\begin{proof}
    Let $x_0 \in B_{1/2}(0) \setminus \overline{D}$ and denote $r_0 \coloneqq \dist\left( x_0, \overline{D} \right)$. For $0 < r \le r_0$ the proof follows the lines of \cite{AltCaffarelli:OnePhaseFreeBd}, so that $\|u\|_{L^{\infty}\left( B_r(x_0) \right)} \ge cr$ for some constant $\overline{c}>0$ independent of $x_0$ and $u$.
    
    To extend the non-degeneracy for $r > r_0$ we proceed as follows. We only need to study the case $B_{10 r_0}(x_0) \subset B_1(0)$, since otherwise one can choose $c = \overline{c}/10$. Now distinguish two cases.
    \begin{itemize}
        \item[i)] Let $0 < r \le 10 r_0$. In this case the non-degeneracy holds by choosing again $c = \overline{c}/10$. \\

        \item[ii)] Let $r > 10 r_0$, and denote $y_0 \coloneqq B_{r_0}(x_0) \cap \{ x_N = 0 \}$. In this case the non-degeneracy can be deduced comparing in $B_{r/10}(y_0)$ the function $u$ from below with, for instance, with $L$-harmonic functions in annuli.
    \end{itemize}

    If $x_0 \in \overline{D}$ the non-degeneracy follows again by comparing from below the function $u$ with $L$-harmonic functions in annuli.

\end{proof}

\section{Blow-up and viscosity setting}\label{section:blowUp}
For any variational minimizer of problem \eqref{eqn:MainPb}, under the conditions \eqref{eqn:SmallnessCond1} and \eqref{eqn:SmallnessCond2}, the origin is a regular point of contact between the free boundary $\partial \{ u>0 \} \cap B_1$ and the obstacle $\{x_N \le 0\} \cap B_1$, in the sense that the blow-up at such point is a half plane solution.

Let us denote with $u_r$ the r-scaling of $u$ at the origin, namely
\[
u_r(x) \coloneqq \frac{1}{r} u\left( r x \right), \quad r>0, x \in B_1 ,
\]
and with $\mathcal{F}_{\infty}$ the (limiting) functional defined as
\[
\mathcal{F}_{\infty}(u, \Omega) \coloneqq  \int_{\Omega} \vert \nabla u \vert^2 + \left \vert \{u>0\} \cap \Omega \right \vert ,
\]
then the following lemma holds.

\begin{lemma}\label{lemma:BlowUpConverges}
There exist a sequence $r_n \to 0^+$ and a function $u_{\infty} \in C^0(\R^N) \cap H^1_{loc}(\R^N)$ such that:
\begin{itemize}
\item[i)] $u_{r_n} \to u_{\infty}$ in $L^{\infty}_{loc}(\R^N)$,
\item[ii)] $u_{r_n} \to u_{\infty}$ in $H^1_{loc}(\R^N)$,
\item[iii)] $ \{ u_{r_n}>0 \} \to \{ u_{\infty}>0 \} $ in the sense of $L^1_{loc}(\R^N)$,
\item[iv)] $ \overline{\{ u_{r_n}>0 \}} \to \overline{\{ u_{\infty}>0 \}} $ locally Hausdorff in $\R^N$,
\item[v)] $u_{\infty}$ is a non-tivial variational minimizer of $\mathcal{F}_{\infty}$ in $B_R$, for any $R>0$.
\end{itemize}
\end{lemma}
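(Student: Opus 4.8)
The plan is to establish compactness of the rescaled family $\{u_r\}_{r>0}$ using the uniform estimates from Section~\ref{section:preliminaries}, and then to upgrade the convergence step by step while checking that the limit inherits minimality with respect to the limiting (constant-coefficient, constant-weight) functional $\mathcal{F}_\infty$. First I would record the scaling behavior of the problem: if $u$ is a variational solution to~\eqref{eqn:MainPb} with operator $L$, weight $Q$ and right-hand side $f$, then $u_r$ is a variational solution of the rescaled problem with coefficients $a^{ij}(r\,\cdot)$, weight $Q(r\,\cdot)$ and right-hand side $r f(r\,\cdot)$. Because of~\eqref{eqn:SmallnessCond1}, $a^{ij}(r\,\cdot)\to\delta^{ij}$ and $Q(r\,\cdot)\to 1$ locally uniformly (indeed in $C^{0,\alpha}_{\loc}$ by the H\"older bound), while $\|rf(r\,\cdot)\|_{L^\infty}=r\|f\|_{L^\infty}\to 0$. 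This is exactly the degeneration that produces $\mathcal{F}_\infty$ in the limit.

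Next I would extract the convergent subsequence. By Lemma~\ref{lemma:LocalLipschitz} the functions $u$ — hence all $u_r$ — are uniformly Lipschitz on compact subsets with a constant independent of $r$ (the Lipschitz bound is scale invariant), so by Arzel\`a--Ascoli a subsequence $u_{r_n}$ converges in $L^\infty_{\loc}(\R^N)$ to some $u_\infty\in C^{0,1}_{\loc}(\R^N)$; this gives (i). The uniform Lipschitz bound also gives a uniform bound on $\int_{B_R}|\nabla u_{r_n}|^2$, so up to a further subsequence $\nabla u_{r_n}\rightharpoonup\nabla u_\infty$ weakly in $L^2_{\loc}$. To get the strong $H^1_{\loc}$ convergence in (ii) I would use the minimality of $u_{r_n}$: comparing $u_{r_n}$ with a competitor built by gluing $u_\infty$ on a ball $B_\rho$ to $u_{r_n}$ on $B_R\setminus B_\rho$ via a cutoff (as in the standard Alt--Caffarelli compactness argument, e.g.\ \cite{Velichkov:RegularityOnePhaseFreeBd}), together with $\liminf\int a^{ij}(r_n\cdot)\partial_iu_{r_n}\partial_ju_{r_n}\ge\int|\nabla u_\infty|^2$ from weak lower semicontinuity and $a^{ij}(r_n\cdot)\to\delta^{ij}$, one forces convergence of the Dirichlet energies and hence strong convergence of the gradients. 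The measure-convergence statements (iii) and (iv) then follow from the now-standard combination of nondegeneracy (Lemma~\ref{lemma:LocalNonDegenerate}, whose constant is scale invariant) and the density estimates it yields: nondegeneracy prevents the positivity sets from shrinking, while uniform Lipschitz continuity and the subharmonicity of $u_{r_n}$ give the matching upper density bound, so $1_{\{u_{r_n}>0\}}\to 1_{\{u_\infty>0\}}$ in $L^1_{\loc}$ and the free boundaries converge in local Hausdorff distance; in particular $u_\infty$ is nontrivial because $u_{r_n}>0$ on $B_1^+$ passes to the limit.

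Finally, for (v) I would show $u_\infty$ minimizes $\mathcal{F}_\infty$ in every $B_R$ among competitors $v$ with $v-u_\infty\in H^1_0(B_R)$ and $B_R^+\subseteq\{v>0\}$. Given such a $v$, I would use it (after the usual cutoff correction near $\partial B_R$ to make it an admissible competitor for $u_{r_n}$, paying a vanishing error because $u_{r_n}\to u_\infty$ strongly in $H^1$ and the Lebesgue measures converge) to get $\mathcal{F}_{r_n}(u_{r_n})\le\mathcal{F}_{r_n}(\tilde v_n)$ where $\mathcal{F}_{r_n}$ is the rescaled functional; then pass to the limit on both sides, using $a^{ij}(r_n\cdot)\to\delta^{ij}$ uniformly, $Q(r_n\cdot)\to 1$, $r_n f(r_n\cdot)\to 0$, the strong $H^1_{\loc}$ convergence on the left, and $L^1$-convergence of the positivity sets for the volume terms. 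The main obstacle is the lower semicontinuity half on the left-hand side, i.e.\ ensuring $\liminf_n\big(\int_{B_R}a^{ij}(r_n\cdot)\partial_iu_{r_n}\partial_ju_{r_n}+\int_{\{u_{r_n}>0\}}Q(r_n\cdot)^2\big)\ge\mathcal{F}_\infty(u_\infty,B_R)$: the Dirichlet part needs the strong convergence established in (ii) (weak lower semicontinuity alone would suffice for this inequality, but strong convergence is what makes the competitor comparison on the right clean), and the volume part needs exactly the Hausdorff/$L^1$ convergence of $\{u_{r_n}>0\}$ together with the absence of mass on the free boundary itself, which again is where nondegeneracy and the density estimates enter. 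Once both sides are under control, letting the cutoff region shrink yields the minimality of $u_\infty$ for $\mathcal{F}_\infty$.
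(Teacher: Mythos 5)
Your proposal is correct and is exactly the standard blow-up compactness argument that the paper invokes by citing \cite[Proposition 6.2]{Velichkov:RegularityOnePhaseFreeBd}: scale-invariant Lipschitz and nondegeneracy constants, Arzel\`a--Ascoli for (i), a gluing-competitor argument for strong $H^1_{\loc}$ convergence in (ii), density estimates for (iii)--(iv), and passing to the limit in the constrained minimality for (v). You have simply filled in the details that the paper delegates to the reference, so there is nothing to add.
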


\begin{proof}
Proceed, for instance, analogously as in \cite[Proposition 6.2]{Velichkov:RegularityOnePhaseFreeBd}.
\end{proof}

Let $u_{\infty}$ as in Lemma \eqref{lemma:BlowUpConverges}, then as a consequence we have the following

\begin{corollary}\label{crl:BlowUpHalfPlane}
There exists a positive sequence $r_n \to 0$ such that $u_{r_n}$, $u_{\infty}$ satisfy $i) - v)$ of Lemma \ref{lemma:BlowUpConverges} and
\[
u_{\infty}(x) = \alpha (x \cdot e_N) \, 1_{\{ x_N \ge 0 \}}
\]
for some constant $\alpha \in(0,1]$.
\end{corollary}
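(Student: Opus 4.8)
The plan is to start from the blow-up $u_\infty$ provided by Lemma \ref{lemma:BlowUpConverges}, which is a nontrivial nonnegative variational minimizer of the limiting (constant-coefficient, unit-weight) functional $\mathcal{F}_\infty$ in every ball $B_R$, and which, by construction at a detachment point \eqref{eqn:0DetachmentCond}, satisfies $u_\infty > 0$ in $B_1^+$, $u_\infty(0)=0$, and has $0$ on its free boundary together with $\{x_N\le 0\}\subseteq \{u_\infty=0\}$ in the half-space sense inherited from the constraint in \eqref{eqn:VariationalSOlution} (the constraint $B_1^+\subseteq\{v>0\}$ passes to the limit because of the Hausdorff convergence in iv)). First I would record that $u_\infty$ is $L$-harmonic — here just harmonic, since the coefficients froze to $\delta^{ij}$ at the origin — in $\{u_\infty>0\}$, in particular in the open upper half-space $B_R^+$ for every $R$, and that it is globally Lipschitz and nondegenerate by the analogues of Lemmas \ref{lemma:LocalLipschitz} and \ref{lemma:LocalNonDegenerate} (or directly from Lemma \ref{lemma:BlowUpConverges} together with the standard Alt--Caffarelli theory applied to $\mathcal{F}_\infty$).

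The core step is to apply Lemma \ref{lemma:linearBehavior} (with $\varepsilon=0$, i.e. $A\equiv I$, so $L=\Delta$) to $u_\infty$: either $u_\infty$ grows faster than every linear function along every non-tangential cone at $0$ (alternative (i)), or there is $\theta>0$ with $u_\infty(x)=\theta(1+o(1))x_N$ along non-tangential directions and $u_\infty(x)\ge \theta(1+o(1))x_N$ overall (alternative (ii)). Alternative (i) is ruled out by the Lipschitz bound on $u_\infty$, which forbids super-linear growth. So we are in case (ii). Now I would exploit homogeneity: it is standard, via a monotonicity formula for $\mathcal{F}_\infty$ (Weiss monotonicity), that the blow-up $u_\infty$ can be taken $1$-homogeneous — or, alternatively, one blows up $u_\infty$ once more at $0$ to get a $1$-homogeneous minimizer $\tilde u$ still satisfying the half-space constraint and the conclusion (ii) of Lemma \ref{lemma:linearBehavior} with the same $\theta$ (the asymptotic linear rate at $0$ is scale invariant). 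For a $1$-homogeneous function, $u_\infty(x)=\theta\,x_N$ exactly on the set $\{x_N\ge\lambda|x|\}$ for every $\lambda\in(0,1)$, hence on all of $\{x_N>0\}$ by letting $\lambda\to 0$; and $u_\infty\equiv 0$ on $\{x_N\le 0\}$ from the constraint. Thus $u_\infty(x)=\theta\,(x\cdot e_N)\mathbf 1_{\{x_N\ge 0\}}$.

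It remains to pin down $\alpha:=\theta\in(0,1]$. Positivity $\alpha>0$ is immediate from nondegeneracy of $u_\infty$ (it is nontrivial, so $\theta\neq 0$). The upper bound $\alpha\le 1$ comes from the free boundary condition for $\mathcal{F}_\infty$: comparing $u_\infty$ with competitors that detach slightly more from $\{x_N=0\}$ in $\{x_N>0\}$, minimality forces the Bernoulli gradient bound $|\nabla u_\infty|\le Q_\infty\equiv 1$ on the part of $\partial\{u_\infty>0\}$ lying in $\{x_N=0\}$ (this is exactly the weak/viscosity form of the fourth line of \eqref{eqn:MainPb} with the limiting weight $1$), which for the half-plane solution reads $\alpha\le 1$. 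One finally feeds the resulting subsequence $r_n$ back: the $r_n$ realizing this last blow-up (composed with the $r_n$ of Lemma \ref{lemma:BlowUpConverges}) is the claimed sequence, and properties i)--v) are stable under this composition.

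The main obstacle I expect is the rigidity passage from the one-sided/non-tangential conclusion of Lemma \ref{lemma:linearBehavior} to the \emph{exact} half-plane form on all of $\R^N$: this genuinely needs homogeneity of the blow-up (hence a Weiss-type monotonicity formula for $\mathcal{F}_\infty$, or an iterated blow-up argument) to upgrade ``$u_\infty(x)=\theta x_N$ in non-tangential cones and $\ge$ elsewhere'' to ``$u_\infty(x)=\theta x_N$ on $\{x_N>0\}$ and $0$ on $\{x_N\le 0\}$.'' The other slightly delicate point is justifying that the half-space constraint $B_1^+\subseteq\{u_\infty>0\}$ and the boundary gradient bound $|\nabla u_\infty|\le 1$ on $\{x_N=0\}\cap\partial\{u_\infty>0\}$ survive the blow-up limit; both follow from the Hausdorff convergence iv) together with uniform nondegeneracy, but they should be stated carefully.
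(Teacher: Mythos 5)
There is a genuine gap at the crucial step concerning the lower half-space. You write that the constraint $B_1^+\subseteq\{v>0\}$ gives, in the limit, $\{x_N\le 0\}\subseteq\{u_\infty=0\}$. But the constraint in \eqref{eqn:VariationalSOlution} says exactly the opposite inclusion: the positivity set must \emph{contain} the upper half-ball. Passing to the blow-up, what you inherit is $\{x_N>0\}\subseteq\{u_\infty>0\}$; there is no a priori reason why $\{u_\infty>0\}$ should not intrude into $\{x_N<0\}$. The detachment condition \eqref{eqn:0DetachmentCond} only guarantees that there are free boundary points of $u$ in $B_r^-$ arbitrarily close to $0$, not that $\{u>0\}$ avoids the lower half-ball. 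Consequently, after establishing $u_\infty(x)=\theta x_N$ on $\{x_N>0\}$ (which is fine once you have $1$-homogeneity), the identity $u_\infty\equiv 0$ on $\{x_N\le 0\}$ does not follow ``from the constraint'' — it is precisely the nontrivial content that remains to be proved.

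The paper's proof handles this by taking a second blow-up $u_\infty$ of the first blow-up $v_\infty$, and then arguing about the set $\mathcal A=\{v_\infty>0\}\cap\{x_N<0\}$. If $0\notin\overline{\mathcal A}$ the conclusion is immediate; if $0\in\overline{\mathcal A}$, the outer ball condition at $0$ and \cite[Lemma 11.17]{CaffarelliSalsa:GeomApproachToFreeBoundary} force $u_\infty=\alpha x_N^+-\beta x_N^-$ for some $\beta\ge 0$, and then $\beta=0$ is ruled out via a density estimate or comparison with the harmonic extension. Your proposal does not contain this dichotomy nor the rigidity argument, and the Weiss-homogeneity route you suggest, while compatible with the rest of your plan, does not by itself close the gap: a $1$-homogeneous minimizer equal to $\theta x_N$ on $\{x_N>0\}$ could still, as far as the constraint alone is concerned, have a nonempty positivity cone in $\{x_N<0\}$. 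You need to rule that out explicitly (the two-plane structure and density-estimate argument of the paper is one way; an argument using nonnegativity plus unique continuation across $\{x_N=0\}$ would be another, but must deal with possible disconnectedness of $\{u_\infty>0\}$). The remaining parts — ruling out alternative (i) of Lemma \ref{lemma:linearBehavior} by the Lipschitz bound, $\alpha>0$ from nondegeneracy, $\alpha\le 1$ from a one-sided first-variation/perturbation argument, and the diagonal extraction of the final sequence — are in the right spirit and agree with the paper.
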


\begin{proof}
Let us denote by $c_n$ the sequence given by Lemma \ref{lemma:BlowUpConverges}, and with $v_{\infty}$ the associated blow-up limit.

By Lemma \ref{lemma:BlowUpConverges} (v) we have that $v_{\infty} \ge 0$ is locally Lipschitz continuous in $\R^N$ and is harmonic in $\{ v_{\infty} > 0 \}$, which is open. Moreover, by Lemma \ref{lemma:linearBehavior} there exists $\alpha > 0$ such that $v_{\infty} = \alpha (x \cdot e_N)$ in $\R^N_{+}$, and by continuity it vanishes at $x_N = 0$. 

Now we perform a blow-up for $v_{\infty}$ at the origin. This time we denote $s_n$ and $u_{\infty}$ respectively the positive sequence of radii and the blow-up limit given by Lemma \ref{lemma:BlowUpConverges}. We claim that
\[
u_{\infty}(x) = \alpha (x \cdot e_N) \, 1_{\{ x_N \ge 0 \}}
\]
To verify the claim, let us denote $\mathcal{A} \coloneqq \Omega^+(v_{\infty}) \cap \{ X_N < 0 \}$. We distinguish two mutually exclusive cases.
\begin{itemize}
    \item[i)] $0 \notin \overline{\mathcal{A}}$. This case is trivial since, for $n$ large enough, $B_{s_n} \cap \Omega^+(v_\infty) \subset \{ x_N \ge 0 \}$.
    \item[ii)] $0 \in \overline{\mathcal{A}}$. Since $\mathcal{A}$ has the outer ball condition, by \cite[Lemma 11.17]{CaffarelliSalsa:GeomApproachToFreeBoundary} there exists $\beta \ge 0$ such that
    \[
    u_{\infty}(x) = \alpha (x \cdot e_N) \, 1_{\{ x_N \ge 0 \}} - \beta (x \cdot e_N) \, 1_{\{ x_N < 0 \}}.
    \]
    However, necessarily $\beta = 0$. This is, for instance, a consequence of the density estimates, and can be obtained easily by contradiction comparing $u_{\infty}(x)$ in $B_1(0)$ with its harmonic extension.
\end{itemize}
We are only left to exhibit a positive sequence $r_n \to 0$ such that the convergence properties $i) - iv)$ of Lemma \ref{lemma:BlowUpConverges} of $u_{r_n}$ to $u_{\infty}$ are satisfied. This can be done by a diagonal argument. We only do this for $i)$ since the other points can be dealt with in the same way.

Let $R_j = j \in \N$, $\varepsilon_j = 1/j$. Choose $n = n(j)$ large enough so that
\[
\left \| \frac{1}{s_n} v_{\infty}(s_n x) - u_{\infty}  \right\|_{L^{\infty}(B_{R_j})} \le \frac{1}{2j}.
\]
Given $s_n$, choose $c_n \coloneqq c_n(s_n)$ so that
\[
\left \| \frac{1}{s_n c_n} u(c_n s_n x) - \frac{1}{s_n} v_{\infty}(s_n x)  \right\|_{L^{\infty}(B_{R_j})} = \frac{1}{s_n} \left \| \frac{1}{c_n} u(c_n x) -  v_{\infty}(x)  \right\|_{L^{\infty}(s_n B_{R_j})} \le \frac{1}{2j}.
\]
The desired sequence is given by $r_n = s_n c_n$.\medskip

Finally, the condition $\alpha>0$, follows from the non-degeneracy of $u$, while $\alpha\le 1$ follows from an argument by contradiction. Indeed, if we suppose that $\alpha>1$, then an internal perturbation of $u_\infty$ with a vector field of the form $\xi=-e_N\phi$ (with $\phi\in C^\infty_c(\R^N)$) produces a competitor with strictly lower energy.
\end{proof}

In the process of proving Theorem \ref{thm:NonSharpRegularity}, it will be useful to consider the notion of \emph{viscosity solution} to problem \eqref{eqn:MainPb}, which we recall below.

Let $0 \le v \in C^1(B_1)$ and $x_0 \in \overline{\Omega^+(u)} \cap B_1$.
\begin{itemize}
    \item[i)] We say that a function $\phi \in C(B_1)$ touches $v$ (strictly) from above at $x_0$ if there exists $r > 0$ such that $B_r(x_0) \subset B_1$ and $\phi^+ \ge u$ in $B_r(x_0)$ ($\phi^+ > u$ in $B_r(x_0) \setminus \{ x_0 \}$ strictly).
    \item[ii)] We say that a function $\phi \in C(B_1)$ touches $v$ (strictly) from below at $x_0$ if there exists $r > 0$ such that $B_r(x_0) \subset B_1$ and $\phi \le u$ in $B_r(x_0)$ ($\phi^+ < u$ in $B_r(x_0) \setminus \{ x_0 \}$ strictly).
\end{itemize}
The notion of viscosity solution can be given in terms of strict comparison sub/supersolution.
\begin{itemize}
    \item[i)] We say that a function $\phi \in C(B_1)$ is a strict comparison supersolution in $B_r(x_0) \subset B_1$ if $\phi \in C^1\left( \overline{\Omega^+(\phi)} \right) \cap B_r(x_0)$, $L(\phi) < f$ in $B_r(x_0)$ in the sense of distributions, and $\vert D_a \phi \vert < Q$ in $\partial \{ \phi >0 \}$.
    \item[ii)] We say that a function $\phi \in C(B_1)$ is a strict comparison subsolution in $B_r(x_0) \subset B_1$ if $\phi \in C^1\left( \overline{\Omega^+(\phi)} \right) \cap B_r(x_0)$, $L(\phi) > f$ in $B_r(x_0)$ in the sense of distributions, and $\vert D_a \phi \vert > Q$ in $\partial \{ \phi >0 \}$.
\end{itemize}
Now we are ready to give the definitions of viscosity sub/super/solution for problem \ref{eqn:MainPb}.
\begin{itemize}
    \item[i)] A function $u \in C(B_1)$ is called a viscosity subsolution of problem \ref{eqn:MainPb} if:no strict comparison supersolution can touch $u$ from above in $(\overline{\Omega^+(u)} \setminus \{ x_N = 0 \}) \cap B_1$,
    \item[ii)] A function $u \in C(B_1)$ is called a viscosity supersolution of problem \ref{eqn:MainPb} if:no strict comparison subsolution can touch $u$ from below in $\overline{\Omega^+(u)} \cap B_1$,
    \item[iii)] A function $u \in C(B_1)$ which is both a viscosity subsolution and supersolution for \ref{eqn:MainPb} is called a viscosity solution.
\end{itemize}
As usual, the following result holds.

\begin{lemma}\label{lemma:SolVariationalIsViscous}
If $u$ is a variational solution of problem \eqref{eqn:MainPb}, then it is also a viscosity solution.
\end{lemma}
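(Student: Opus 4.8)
The plan is to prove that a variational solution $u$ of \eqref{eqn:MainPb} is a viscosity solution by checking the three conditions in the definition separately: that $Lu = f$ holds in the classical sense inside $\{u>0\}$ (so that no comparison sub/supersolution issue arises in the interior), that $u$ is a viscosity subsolution, i.e. no strict comparison supersolution touches $u$ from above in $(\overline{\Omega^+(u)} \setminus \{x_N=0\}) \cap B_1$, and that $u$ is a viscosity supersolution, i.e. no strict comparison subsolution touches $u$ from below anywhere in $\overline{\Omega^+(u)} \cap B_1$. The interior PDE is immediate: minimality of $\mathcal F$ under perturbations supported in the open set $\{u>0\}$ forces the first variation to vanish there, giving $Lu = f$ weakly, and then elliptic regularity (Schauder, since $a^{ij} \in C^{0,\alpha}$ and $f \in L^\infty$) upgrades this to a classical solution locally. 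So the content is entirely in the free-boundary conditions.

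For the subsolution property, I would argue by contradiction in the standard way: suppose a strict comparison supersolution $\phi$ (so $L\phi < f$ and $|D_a\phi| < Q$ on $\partial\{\phi>0\}$) touches $u$ from above at a point $x_0 \in (\overline{\Omega^+(u)} \setminus \{x_N=0\}) \cap B_1$. If $x_0 \in \{u>0\}$ this contradicts the strong maximum principle/comparison for $L$ applied to $u$ and $\phi$ (using $Lu = f > L\phi$). If $x_0 \in \partial\{u>0\}$, then near $x_0$ we are away from $\{x_N=0\}$, so $u$ is a genuine one-phase minimizer of the Alt-Caffarelli functional with weight $Q$ and operator $L$ near $x_0$; one then does the usual domain-variation (inner-variation) computation — deform $\{\phi>0\}$ slightly inward/outward along the free boundary using a compactly supported vector field — to produce a competitor $v$ with $B_1^+ \subseteq \{v>0\}$ and $\mathcal F(v) < \mathcal F(u)$, contradicting \eqref{eqn:VariationalSOlution}. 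The comparison of the boundary densities $|D_a u|$ versus $|D_a\phi|$ enters exactly here: the variational optimality pins down $|D_a u| = Q$ along the reduced free boundary in the viscosity sense, which is incompatible with being touched from above by something with strictly smaller slope. Concretely I would invoke the argument of \cite[Proposition 6.2 / Chapter 7]{Velichkov:RegularityOnePhaseFreeBd} or \cite{AltCaffarelli:OnePhaseFreeBd} verbatim, since away from $\{x_N=0\}$ the problem \eqref{eqn:MainPb} is just the classical one with variable coefficients.

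For the supersolution property the argument is symmetric but must also cover touching points on $\{x_N=0\}$: suppose a strict comparison subsolution $\phi$ ($L\phi > f$, $|D_a\phi| > Q$ on $\partial\{\phi>0\}$) touches $u$ from below at $x_0 \in \overline{\Omega^+(u)}\cap B_1$. Away from the interior and away from $\{x_N=0\}$ this is again the classical contradiction. The new point is $x_0 \in \partial\{u>0\} \cap \{x_N=0\}$: there the constraint $B_1^+ \subseteq \{v>0\}$ in \eqref{eqn:VariationalSOlution} means we are only allowed to enlarge the positivity set into the lower half-space, and the condition to verify is $|D_a u| \le Q$ on $\partial\{u>0\}\cap\{x_N=0\}$ in the viscosity sense — which is precisely the one-sided inequality in \eqref{eqn:MainPb}, and it is automatically consistent with being touched from below by $\phi$ with $|D_a\phi| > Q$ only if no such $\phi$ exists; so one shows that a $\phi$ with slope $> Q$ touching from below at a point of $\{x_N=0\}$ would, after pushing $\{\phi>0\}$ slightly into $\{x_N<0\}$, give an admissible competitor lowering the energy (here the non-degeneracy of $u$ from Lemma \ref{lemma:LocalNonDegenerate} and the linear-behavior dichotomy of Lemma \ref{lemma:linearBehavior} guarantee $u$ detaches no faster than linearly, so the comparison is meaningful). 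I would carry out the energy comparison: if $\phi$ has slope $\gamma > Q(x_0)$ at $x_0$, then replacing $u$ by $\max(u,\phi)$-type competitor (suitably cut off) changes the Dirichlet energy by roughly $-\gamma^2 |$ added region $|$ and the measure term by $+Q^2|$ added region$|$, net negative since $\gamma > Q$, contradicting minimality.

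The main obstacle is the last case — the boundary point $x_0 \in \{x_N=0\}$ — because there the admissible competitors in \eqref{eqn:VariationalSOlution} are constrained to contain $B_1^+$, so one cannot freely perturb in all directions, and one must be careful that the inward perturbation into $\{x_N < 0\}$ both keeps the competitor admissible and genuinely decreases $\mathcal F$; this requires knowing that $u$ has the right linear growth at $x_0$ (so that the first-order energy expansion is valid), which is exactly what Lemma \ref{lemma:linearBehavior}, Lemma \ref{lemma:LocalLipschitz} and Lemma \ref{lemma:LocalNonDegenerate} were established for. Everything else is the routine translation of the Alt–Caffarelli / De Silva viscosity-from-variational argument to the divergence-form variable-coefficient setting, which causes no conceptual difficulty since $a^{ij} \in C^{0,\alpha}$ gives the needed Schauder theory.
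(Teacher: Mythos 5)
Your proposal follows essentially the same route as the paper's proof: at points of $\{u>0\}$ the contradiction comes from the strong maximum principle, and at free-boundary points one uses the inner/outer ball condition supplied by the touching test function, the blow-up limits from Lemma \ref{lemma:BlowUpConverges} and Corollary \ref{crl:BlowUpHalfPlane} (linear behavior via Lemma \ref{lemma:linearBehavior}), and an energy/first-variation comparison as in \cite{Velichkov:RegularityOnePhaseFreeBd}. You organize the case split by subsolution versus supersolution, the paper by interior versus free-boundary touching point, but these are the same cases. Your identification of the one genuinely new scenario --- a strict subsolution touching from below at a point of $\{x_N=0\}$ --- together with the observation that the admissibility constraint $B_1^+\subseteq\{v>0\}$ still permits enlarging $\{u>0\}$ into $\{x_N<0\}$, which is the only perturbation needed there, is exactly the right point.

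One local imprecision that you should repair: the competitor $\max(u,\phi)$ is vacuous. Since $\phi$ touches $u$ from below, $\phi\le u$ near $x_0$, so $\max(u,\phi)=u$ and the energy does not change. The role of $\phi$ is only to force an inner ball condition at $x_0$ and thereby pin down the slope of the blow-up of $u$: writing the blow-up as $\beta(x\cdot\nu)^+$, one gets $\beta\ge|D_a\phi(x_0)|>Q(x_0)$. The energy deficit is then produced by a \emph{different} competitor that genuinely enlarges $\{u>0\}$ --- for instance the $L$-harmonic extension of $u$ across the free boundary into a slightly larger set, or an inner-variation deformation; the first-order change in $\mathcal F$ is $(-\beta^2+Q(x_0)^2)$ times the added volume, which is strictly negative when $\beta>Q(x_0)$, contradicting \eqref{eqn:VariationalSOlution}. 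This is exactly what the reference to \cite[Proposition 7.1]{Velichkov:RegularityOnePhaseFreeBd} in the paper encapsulates. The rest of your argument is correct as you wrote it.
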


\begin{proof}
If a strict comparison sub/supersolution touches $u$ at $x_0 \in \{ u>0\}$, then a contradiction arises from the strong maximum principle.

If, on the other hand, a strict comparison sub/supersolution touches $u$ at $x_0 \in \partial \{ u>0\}$, then $\partial \{ u>0 \}$ has an inner/outer ball condition at $x_0$. Hence, a contradiction is reached after performing a blow-up at $x_0$ and then proceeding similarly as in the proof of \cite[Proposition 7.1]{Velichkov:RegularityOnePhaseFreeBd}.

\end{proof}

\section{Partial Harnack inequality}\label{section:BoundaryHarnack}
This section is dedicated to the partial Harnack inequality, namely to the improvement of flatness at fixed scale. We begin with two preliminary lemmas, that will be useful for the subsequent analysis.

For the next two lemmas, we fix two functions $g \in C^2_c\left( \R^{N-1} \right)$ and $\rho\in C^2_c(\R)$ such that:
\begin{itemize}
\item $g\left(x\right) = 1$ for all $x \in B_{1/2}(0)$ and $g\left(x\right) = 0$ for $x \in B_{3/4}^c(0)$;
\item $\rho(x) = 1$ for $x \in [-1/2, 1/2]$ and $\rho(x) = 0$ for $\vert x \vert \ge 3/4$.
\end{itemize}
Consider the family of diffeomorphisms
\[
\Psi_t(x_1, ..., x_N) \coloneqq \left( x_1, ..., x_N + t \varepsilon \rho(x_N)  g(x_1, ...,x_{N-1}) \right), \qquad t \in [0, 1].
\]
Moreover, denote 
\[
\mathcal{A}_t \coloneqq \Psi_t\left( B_1 \cap \{ x_N \ge - \varepsilon \} \right) \quad \text{and} \quad p_t \coloneqq (\Gamma x_N - \varepsilon)^+ \circ \Psi_t^{-1}.
\]

\begin{lemma}\label{lemma:barriers}
For every $\kappa>1$ and $\eta>0$, there are $\eps_0>0$ and $C>0$ such that the following holds. Given $\eps\in(0,\eps_0)$, a symmetric matrix $A$ with $C^{0,\alpha}$ coefficients satisfying 
$$a_{ij}(0)=\delta_{ij}\qquad\text{and}\qquad \|a_{ij}(x)-\delta_{ij}\|_{C^{0,\alpha}(B_1)}<\eta\eps\,,$$
and every constant $\Gamma\in(\frac{1}{\kappa},\kappa)$, 
we have that:
\begin{enumerate}
\item the solution $h$ to the problem
\begin{align*}
  \begin{cases}   Lh = 0 & \text{ in } B_{1}^+\\
   h = \Gamma(x_N - \varepsilon)^+ & \text{ on } \partial B_{1} \cap \{ x_N > 0 \} \\
     h = 0 & \text{ on }  B_{1} \cap \{ x_N = 0 \} ,
    \end{cases}
    \end{align*}
satisfies     
\begin{equation*}
\vert \nabla h - \Gamma e_N \vert \le C \varepsilon \text{ in } \overline{B_{1/2}^+} 
\end{equation*}
\item the solution $H_t$ to the problem
\begin{align*}
  \begin{cases}   LH_t = 0 & \text{ in } \mathcal{A}_t\\
   H_t = p_t & \text{ on }  \partial \mathcal{A}_t ,
    \end{cases}
    \end{align*}
satisfies
\begin{equation*}
\vert \nabla H_t - \Gamma e_N \vert \le C \varepsilon \text{ in } \overline{B_{1/2}^+}
\end{equation*}
uniformly for $t \in [0, 1]$.
\end{enumerate}
\end{lemma}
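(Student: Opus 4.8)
The plan is to prove both assertions by the same perturbation scheme: compare the variable-coefficient solution against the explicit constant-coefficient profile $\Gamma(x_N-\varepsilon)^+$ (resp.\ its composition with the diffeomorphism $\Psi_t$), which is $L_0$-harmonic for $L_0=\Delta$ away from $\{x_N=\varepsilon\}$, and show the difference is $O(\varepsilon)$ in $C^1$ up to the flat part of the boundary via Schauder estimates. I would first treat (1), then deduce (2) by a change of variables together with the observation that $\Psi_t$ is $C^{1,\alpha}$-close to the identity uniformly in $t$.

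\emph{Step 1 (reduction of (1)).} Write $h_0(x):=\Gamma(x_N-\varepsilon)^+$. Note $h_0$ is Lipschitz, harmonic in $\{x_N>\varepsilon\}$, identically $0$ in $\{x_N\le\varepsilon\}$, and agrees with the boundary datum of $h$ on $\partial B_1\cap\{x_N>0\}$ and on $B_1\cap\{x_N=0\}$. Set $z:=h-h_0$. Then $z=0$ on $\partial B_1^+\cup(B_1\cap\{x_N=0\})$, and in the distributional sense
\begin{equation*}
L z=\operatorname{div}(A\nabla h_0)=\operatorname{div}\big((A-I)\nabla h_0\big)+\Delta h_0=\operatorname{div}(F)\qquad\text{in }B_1^+,
\end{equation*}
with $F:=(A-I)\nabla h_0$, because $\Delta h_0=0$ away from $\{x_N=\varepsilon\}$ and across $\{x_N=\varepsilon\}$ the one-sided normal derivatives of $h_0$ jump from $0$ to $\Gamma$, producing a nonnegative measure supported on a hyperplane which does not affect the Schauder argument below once one works with the difference $h-w$ for $w$ the $L$-harmonic function with the same boundary data as $h_0$ — alternatively, one mollifies $h_0$ slightly. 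The key point is the bound $\|F\|_{C^{0,\alpha}(\overline{B_1^+})}\le \|A-I\|_{C^{0,\alpha}}\,\|\nabla h_0\|_{C^{0,\alpha}}\le C\eta\varepsilon\,\Gamma\le C\kappa\eta\,\varepsilon$, using the hypothesis $\|a_{ij}-\delta_{ij}\|_{C^{0,\alpha}}<\eta\varepsilon$ and $\Gamma<\kappa$.

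\emph{Step 2 (Schauder up to the flat boundary).} Since $z$ vanishes on the flat portion $B_1\cap\{x_N=0\}$ and solves $Lz=\operatorname{div}F$ with $A\in C^{0,\alpha}$ uniformly elliptic (ellipticity constants controlled once $\eta\varepsilon$ is small), boundary Schauder estimates (e.g.\ \cite[Theorem 8.34]{GilbargTrudinger:Elliptic98} together with interior estimates, applied on $B_{3/4}^+$) give
\begin{equation*}
\|z\|_{C^{1,\alpha}(\overline{B_{1/2}^+})}\le C\big(\|F\|_{C^{0,\alpha}(\overline{B_{3/4}^+})}+\|z\|_{L^\infty(B_{3/4}^+)}\big).
\end{equation*}
By the maximum principle applied to $z$ (which solves $Lz=\operatorname{div}F$ with zero boundary data on $\partial B_1^+$) one has $\|z\|_{L^\infty}\le C\|F\|_{L^\infty}\le C\kappa\eta\,\varepsilon$; hence $\|z\|_{C^{1,\alpha}(\overline{B_{1/2}^+})}\le C\varepsilon$, and in particular $|\nabla h-\nabla h_0|=|\nabla z|\le C\varepsilon$ on $\overline{B_{1/2}^+}$. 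Since $\nabla h_0=\Gamma e_N$ on $\{x_N>\varepsilon\}$ and $B_{1/2}^+$ differs from $B_{1/2}\cap\{x_N>\varepsilon\}$ by a strip of width $\varepsilon$ on which $|\nabla h|$ is anyway bounded by $C\Gamma$ via the same estimate, we obtain $|\nabla h-\Gamma e_N|\le C\varepsilon$ in $\overline{B_{1/2}^+}$ after enlarging $C$. This proves (1); choosing $\varepsilon_0$ small enough makes all smallness requirements consistent.

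\emph{Step 3 (deducing (2)).} For (2), push the problem forward by $\Psi_t^{-1}$: setting $\widetilde H_t:=H_t\circ\Psi_t$, the function $\widetilde H_t$ solves a problem of the same type as in (1) on $B_1\cap\{x_N\ge-\varepsilon\}$ with boundary datum $(\Gamma x_N-\varepsilon)^+$ and with operator $\widetilde L=\operatorname{div}(\widetilde A\nabla\,\cdot\,)$, where $\widetilde A$ is the pullback of $A$ under $\Psi_t$. Because $\Psi_t(x)=x+t\varepsilon\,\rho(x_N)g(x')e_N$ with $\rho,g\in C^2_c$, we have $\|\Psi_t-\mathrm{id}\|_{C^{2}}\le C\varepsilon$ uniformly in $t\in[0,1]$; consequently $\|\widetilde A-I\|_{C^{0,\alpha}(B_1)}\le \|A-I\|_{C^{0,\alpha}}+C\varepsilon\le C(\eta+1)\varepsilon$ and $\widetilde A(0)=I+O(\varepsilon)$. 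After a further constant linear change of variables normalizing $\widetilde A(0)=I$ exactly (which costs $O(\varepsilon)$ in all norms), Step 1--Step 2 apply verbatim and give $|\nabla\widetilde H_t-\Gamma e_N|\le C\varepsilon$ on $\overline{B_{1/2}^+}$. Finally, $\nabla H_t=(D\Psi_t^{-1})^{T}(\nabla\widetilde H_t)\circ\Psi_t^{-1}$ and $D\Psi_t^{-1}=I+O(\varepsilon)$ uniformly, so $|\nabla H_t-\Gamma e_N|\le C\varepsilon$ on $\overline{B_{1/2}^+}$ as well, uniformly in $t$.

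\emph{Main obstacle.} The only delicate point is the presence of the Lipschitz kink of $h_0$ along $\{x_N=\varepsilon\}$, which a priori obstructs a direct Schauder estimate for $z=h-h_0$. I expect to handle this cleanly by replacing $h_0$ with the $L$-harmonic function $w$ having the same boundary values (so that $h-w$ has a genuinely $C^{0,\alpha}$ right-hand side) and separately estimating $w-h_0$; the latter is $O(\varepsilon)$ in $C^{1}$ away from the strip $\{0\le x_N\le\varepsilon\}$ by comparison, and inside the strip both $\nabla w$ and $\nabla h$ are bounded by $C\Gamma$, which is all that is needed since the strip has width $\varepsilon$. Everything else is a routine perturbation of the constant-coefficient model, with the uniform-in-$t$ control coming solely from the explicit, $C^2$-bounded form of $\Psi_t$.
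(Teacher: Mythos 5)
You correctly identify the key obstacle, but the proposed repair is circular, and without it the Schauder step fails. Concretely: in Step~2 you write $\|F\|_{C^{0,\alpha}}\le\|A-I\|_{C^{0,\alpha}}\,\|\nabla h_0\|_{C^{0,\alpha}}$, but $\nabla h_0$ is \emph{discontinuous} across $\{x_N=\varepsilon\}$, so $F=(A-I)\nabla h_0$ is only $L^\infty$, not $C^{0,\alpha}$, and the boundary Schauder estimate for $z=h-h_0$ is not available. Your ``Main obstacle'' fix --- replacing $h_0$ by the $L$-harmonic function $w$ with the same boundary data --- does not help: since $h$ and $h_0$ share the same Dirichlet data on $\partial B_1^+$, that function $w$ is $h$ itself by uniqueness, so $h-w\equiv 0$ and estimating $w-h_0$ is exactly the original task.

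The standard repair (and what makes the cited lemmas in \cite{MaialeTortoneVelichkov:2021epsilonregularity,ferreriVelichkov2023:oneSidedTwoPhase} work) is to compare $h$ with the globally affine profile $\ell(x):=\Gamma x_N$, which has no kink. Then $z:=h-\ell$ solves
\[
Lz=-\Gamma\,\mathrm{div}\bigl((A-I)e_N\bigr)\quad\text{in }B_1^+,
\]
a divergence-form right-hand side whose flux $\Gamma(A-I)e_N$ lies in $C^{0,\alpha}(\overline{B_1})$ with norm at most $\kappa\eta\varepsilon$; moreover $z=0$ on $B_1\cap\{x_N=0\}$ and $|z|=\Gamma\bigl|(x_N-\varepsilon)^+-x_N\bigr|\le\kappa\varepsilon$ on $\partial B_1\cap\{x_N>0\}$. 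The maximum principle (together with the $L^\infty$ bound for the solution of the homogeneous-boundary-data problem with this divergence RHS) gives $\|z\|_{L^\infty(B_1^+)}\le C\varepsilon$, and boundary Schauder with vanishing Dirichlet data on the flat part then gives $\|z\|_{C^{1,\alpha}(\overline{B_{1/2}^+})}\le C\varepsilon$, hence $|\nabla h-\Gamma e_N|=|\nabla z|\le C\varepsilon$ in $\overline{B_{1/2}^+}$. Your Step~3 (pullback by $\Psi_t$ with uniform $C^2$ bounds, then a constant linear change of variables to renormalize $\widetilde A(0)$) is sound once Step~2 is repaired as above; the paper's own argument for part~(2) simply invokes Schauder to compare $H_t$ with $p_t$ on $\mathcal{A}_t$, which is the same computation before undoing $\Psi_t$. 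Also note a sign slip in Step~1: $Lz=-\mathrm{div}(A\nabla h_0)$, not $+\mathrm{div}(A\nabla h_0)$, though this does not affect the estimates.
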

\begin{proof}
    For the proof of point $1$, see for instance \cite[Lemma 4.2]{MaialeTortoneVelichkov:2021epsilonregularity} or the proof of \cite[Lemma 3.1]{ferreriVelichkov2023:oneSidedTwoPhase}.

    Concerning point $2$, for any $t \in [0, 1]$, by the Schauder estimates 
\[
   \| H_t - p_t \|_{C^{1, \alpha}}\left(\overline{\mathcal{A}_t}\right) \le c \eta \varepsilon \quad \text{and} \quad \vert \nabla p_t - \Gamma e_N \vert \le c \eta \varepsilon \text{ on } \overline{\mathcal{A}_t}
\]
for some constant $c > 0$ independent on $t$. Hence, the claim follows.
\end{proof}

In the following lemma, we prove that if the free boundary is sufficiently flat with the slope of the function sufficiently distant from the imposed one (in our case $\vert D_a u \vert = 1$), then locally the free boundary collapses to the obstacle. Such property, is very much in the spirit of the regularity for the two-phase Bernoulli problem \cite{DePhilippisSpolaorVelichkov2021:TwoPhaseBernoulli}.
\begin{lemma}\label{lemma:solIsFlatAtContact}
Let $u$ be a viscosity solution to problem \eqref{eqn:MainPb}, under the condition \eqref{eqn:SmallnessCond2}. There exist universal positive constants $\varepsilon_0, \delta_0$, $C$ such that, if $\eps\in(0,\eps_0]$ and
\[
(\Gamma x_N - \varepsilon)^+ \le u \le (\Gamma x_N + \varepsilon)^+ \quad \text{and} \quad  \vert 1 - \Gamma \vert > C \varepsilon,
\]
then 
\[
\overline{\Omega^+(u)} \cap \overline{B_{1/2}} = \{ x_N \ge 0 \} \cap \overline{B_{1/2}}\ .
\]
In particular, we have that 
\[
(\Gamma-c\eps)x_N^+\le u(x)\le (\Gamma+c\eps)x_N^+\quad\text{in}\quad B_{1/4}\ ,
\]
for a dimensional constant $c>0$.
\end{lemma}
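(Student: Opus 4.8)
The plan is to exploit the barrier $H_t$ from Lemma~\ref{lemma:barriers}(2) as a family of comparison supersolutions (after a slight scaling of the slope) and to slide it down along the family $\Psi_t$, using the viscosity property of $u$ to conclude that the free boundary cannot detach from $\{x_N=0\}$ inside $B_{1/2}$. Without loss of generality assume $\Gamma < 1 - C\eps$ (the case $\Gamma > 1 + C\eps$ is symmetric, comparing from the other side). First I would set $\widetilde\Gamma := \Gamma(1+K\eps)$ for a suitable universal $K$ chosen so that, on the one hand, $|\nabla H_t - \widetilde\Gamma e_N|\le C'\eps$ on $\overline{B_{1/2}^+}$ (Lemma~\ref{lemma:barriers}(2) applied with slope $\widetilde\Gamma$), which forces $|D_a H_t| \le \widetilde\Gamma + C''\eps < 1$ on $\partial\mathcal{A}_t\cap B_{1/2}$ using the ellipticity normalization $a^{ij}(0)=\delta^{ij}$ and \eqref{eqn:SmallnessCond2}; and on the other hand $\widetilde\Gamma x_N + C'''\eps^2 \ge \Gamma x_N + \eps$ on the relevant region once $\Gamma$ is bounded below — actually the clean way is to note that the graph $\{x_N = \Psi_t^{-1}\text{-image of }\eps/\Gamma\}$ of the free boundary of $p_t$ slides from $\{x_N = \eps/\Gamma\}$ (at $t=0$, flat, above the free boundary of $u$ which sits below $x_N = \eps/\Gamma$ wherever $u>0$... wait, rather below where $u$ vanishes) down to a graph that dips below $\{x_N=0\}$ in $B_{1/2}$ at $t=1$.

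Concretely, the key steps in order: (1) Observe that at $t=0$, $H_0 \ge u$ in $\mathcal{A}_0 = B_1\cap\{x_N\ge -\eps\}$: on the boundary $\partial\mathcal{A}_0$ we have $H_0 = p_0 = (\Gamma x_N - \eps)^+$, which on $\partial B_1\cap\{x_N>0\}$ is $\le u$... so actually I need $H_t$ to be a \emph{supersolution above} $u$, meaning $H_t \ge u$; since on $\partial B_1$ we only know $u \le (\Gamma x_N + \eps)^+$, I should instead use the barrier with boundary data $(\Gamma x_N + \eps)^+\circ\Psi_t^{-1}$, i.e.\ redefine $p_t := (\Gamma x_N + \eps)^+\circ\Psi_t^{-1}$ and $\mathcal{A}_t := \Psi_t(B_1\cap\{x_N\ge \eps/\Gamma\}$ shifted appropriately$)$ — the point is to arrange $H_0\ge u$ on $\partial\mathcal{A}_0$ and hence, since $LH_0 = 0 \ge f$ and $u$ is a subsolution in the interior, $H_0 \ge u$ in $\mathcal{A}_0$ by comparison; (2) slide $t$ from $0$ to $1$: for each $t$, $H_t\ge u$ on $\partial\mathcal{A}_t$ by construction away from where $\partial\mathcal{A}_t$ enters $B_{1/2}$, so if $H_t$ first touches $u$ from above it must be at an interior free boundary point $x_0\in\partial\{u>0\}\cap\partial\{H_t>0\}\cap B_{1/2}$ with $x_0\in\{x_N<0\}$ or $x_0\in\{x_N=0\}$; (3) the touching cannot happen at $x_0$ with $(x_0)_N < 0$, because there $H_t$ (being, after the slope correction, a strict comparison supersolution with $|D_a H_t| < 1 = Q$) would contradict the definition of $u$ as a viscosity subsolution on $(\overline{\Omega^+(u)}\setminus\{x_N=0\})\cap B_1$; (4) therefore the sliding goes all the way to $t=1$ with $H_1\ge u$ throughout, and since $\{H_1 = 0\}\supseteq$ a region of $B_{1/2}$ that contains a neighborhood in $\{x_N\ge 0\}$ of part of $\partial E$ (the bump $\Psi_1$ pushes the zero set of $p_1$ strictly into $\{x_N>0\}\cap B_{1/2}$), we get $u\equiv 0$ there; combined with the constraint $B_1^+\subseteq\{u>0\}$ from \eqref{eqn:VariationalSOlution} — which for a viscosity solution manifests as $u>0$ in $B_1^+$ from \eqref{eqn:MainPb} — this is a contradiction \emph{unless} the detachment hypothesis fails, i.e.\ unless $\overline{\Omega^+(u)}\cap\overline{B_{1/2}} = \{x_N\ge 0\}\cap\overline{B_{1/2}}$.

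Once the free boundary is known to coincide with $\{x_N=0\}$ in $\overline{B_{1/2}}$, the function $u$ is $L$-harmonic in $B_{1/2}^+$, vanishes on $\{x_N=0\}\cap B_{1/2}$, and is trapped between $(\Gamma x_N - \eps)^+$ and $(\Gamma x_N + \eps)^+$; so $w := u - \Gamma x_N$ is $L$-harmonic-up-to-$f$ with $\|w\|_{L^\infty(B_{1/2}^+)}\le \eps(1 + \Gamma) \le C\eps$ and $w = 0$ on $\{x_N = 0\}$, and the boundary Schauder/gradient estimate (as in Lemma~\ref{lemma:barriers}(1), or \cite[Lemma 4.2]{MaialeTortoneVelichkov:2021epsilonregularity}) gives $|\nabla u - \Gamma e_N| \le c\eps$ in $B_{1/4}^+$, hence $(\Gamma - c\eps)x_N^+ \le u \le (\Gamma + c\eps)x_N^+$ in $B_{1/4}$. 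The main obstacle is step (2)--(3): getting the geometry of the sliding family $\Psi_t$ and the corrected slope $\widetilde\Gamma$ to line up so that (a) $H_t$ genuinely stays above $u$ on the part of $\partial\mathcal{A}_t$ outside $B_{1/2}$ for \emph{all} $t$, and (b) the strict inequality $|D_a H_t| < Q$ holds on the free boundary of $H_t$ so the viscosity subsolution property applies — both of these hinge on the quantitative smallness $|1-\Gamma| > C\eps$ with $C$ chosen \emph{after} the universal constants in Lemma~\ref{lemma:barriers} are fixed, which is exactly why the hypothesis is stated that way.
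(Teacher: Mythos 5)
Your plan follows the same broad strategy as the paper: slide a family of barriers supported on the deformed domains $\mathcal{A}_t$ along the diffeomorphisms $\Psi_t$, and use the viscosity property of $u$ to conclude that the free boundary must collapse onto $\{x_N=0\}$ inside $B_{1/2}$. However, there is a genuine gap in how you set up the barrier. You propose to use the $L$-harmonic function $H_t$ from Lemma~\ref{lemma:barriers}(2), and claim that after a slope rescale $\widetilde\Gamma=\Gamma(1+K\eps)$ it becomes a strict comparison supersolution. But a slope rescale only affects the free boundary condition $|D_aH_t|<Q$; it does nothing to the PDE. Since $LH_t=0$ and $f\le 0$, we have $LH_t=0\ge f$, which is the \emph{wrong} strict inequality: a strict comparison supersolution must satisfy $L\phi<f$ (indeed, if $f<0$ strictly somewhere, then $LH_t>f$ there, so $H_t$ is a subsolution there, not a supersolution). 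This breaks step~(3): the maximum principle cannot rule out an interior touching inside $\{u>0\}$, because $L(H_t-u)=-f\ge0$ makes $H_t-u$ $L$-subharmonic, and a nonnegative subharmonic function can vanish at an interior point ($|x|^2$ is the model). The paper fixes exactly this by introducing $z_{1,t}$ solving $Lz_{1,t}=-\delta<0$ with zero boundary data, and working with $w_t=z_{0,t}+2\eps\delta z_{1,t}$ (where $z_{0,t}$ solves $Lz_{0,t}=f$, not $Lz_{0,t}=0$, with boundary data $p_t$), so that $Lw_t<f$ strictly and $w_t$ is a genuine strict comparison supersolution at every stage of the slide.

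Two smaller points. First, the slope rescale $\widetilde\Gamma=\Gamma(1+K\eps)$ does not resolve the boundary-data issue you noticed (at $x_N=O(1)$ it might help, but near $x_N=0$ the rescaled linear function is still below $\Gamma x_N+\eps$); you correctly sense that the data $p_t$ must be adjusted so that the barrier genuinely dominates $u\le(\Gamma x_N+\eps)^+$ on the outer boundary, but your mid-stream redefinition of $\mathcal{A}_t$ as $\Psi_t(B_1\cap\{x_N\ge\eps/\Gamma\})$ places the initial domain strictly above $\{x_N=0\}$, so $\{u>0\}$ is not contained in $\mathcal{A}_0$ and the initial comparison never starts; the correct setup keeps the lower face at $\{x_N=-\eps\}$ (or a comparable level below $0$) and slides it up to $\{x_N=0\}$. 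Second, the case $\Gamma>1+C\eps$ is not ``symmetric, comparing from the other side'' in a naive sense: the paper does not slide a subsolution all the way to $t=1$ but instead derives an immediate contradiction, because a family of strict comparison subsolutions $v_t$ with $|D_a v_t|\ge 1+C\eps/2$ and $v_0$ touching $u$ from below at the origin already violates the one-sided free boundary condition $|D_a u|\le Q$ on $\partial\{u>0\}\cap\{x_N=0\}$ from \eqref{eqn:MainPb}. Your treatment of the final gradient estimate via boundary Schauder estimates in $B_{1/2}^+$ is correct and matches the paper.
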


\begin{proof}
For any $t \in [0, 1]$, define the functions $z_{0, t}, z_{1, t} \in H^1\left(B_1\right)$ such that
\begin{align*}
L z_{0, t} = f \text{ in } \mathcal{A}_t \quad & \text{and} \quad z_{0, t} = p_t \text{ on } \partial \mathcal{A}_t, \\
L z_{1, t} = - \delta < 0 \text{ in } \mathcal{A}_t \quad & \text{and} \quad z_{1, t} = 0 \text{ on } \partial \mathcal{A}_t, \\
z_{0, t} = z_{1, t} & = 0 \text{ on } B_1 \setminus \mathcal{A}_t.
\end{align*}

By the maximum principle it holds that $z_{1, t} \ge 0$ in $\mathcal{A}_t$, and by elliptic regularity (e.g. \cite[Theorem 8.16]{GilbargTrudinger:Elliptic98}) 
\begin{equation}\label{eqn:solIsFlatAtContactContinuity}
\| z_{i, t_0} - z_{i, t} \|_{L^{\infty}\left(B_1\right)} \to 0 \text{ as } t \to t_0, \qquad i = 1, 2.
\end{equation}
Moreover, by the Schauder estimates 
\begin{equation}\label{eqn:solIsFlatAtContactSchauder}
    \| z_{1, t} \|_{C^{1, \alpha}\left(\overline{\mathcal{A}_t}\right)} \le c \quad \text{and} \quad \| z_{0, t} - p_t \|_{C^{1, \alpha}}\left(\overline{\mathcal{A}_t}\right) \le c \varepsilon
\end{equation}
for some constant $c > 0$ independent on $t$. In particular, in $\overline{\mathcal{A}_t}$
\[
\vert D_a z_{0, t} \vert^2 \le (1+c \varepsilon \delta) \vert \nabla z_{0, t} \vert^2 \le (1+c \varepsilon \delta) \left( \vert \nabla p_t \vert^2 + 3 c \varepsilon \right) \le 1- \frac{C}{2} \varepsilon
\]
uniformly in $t$, for $\varepsilon_0, \delta_0$ sufficiently small, and $C >0$ sufficiently large but universal.

Now, for $t \in [0, 1]$ define 
\[
w_t \coloneqq z_{0, t} + 2 \varepsilon \delta z_{1, t} \quad \text{in } \mathcal{A}_t.
\]
By the above properties and the hypotheses, for $\varepsilon_0, \delta_0$ sufficiently small and $C > 0$ sufficiently large
\begin{align}\label{eqn:solISFlatAtContactStrictSupersol}
\begin{split}
& w_0 > u \text{ in } \overline{\mathcal{A}_0}, \\
& L w_t < f \text{ in } \mathcal{A}_t \text{ for all } t \in [0, 1], \\
& \vert D_a w_t \vert < 1 \text{ in } \overline{\mathcal{A}_t} \text{ for all } t \in [0, 1].
\end{split}
\end{align}
Let $t'$ be the largest $t \in [0, 1]$ such that $w_{t} > u \text{ in } \overline{\mathcal{A}_{t'}} \cap \overline{\Omega^+(u)}$. Now we prove by contradiction that $t' = 1$. Indeed, by \eqref{eqn:solIsFlatAtContactContinuity} and \eqref{eqn:solISFlatAtContactStrictSupersol} we have $w_{t'} \ge u$ on $\overline{\mathcal{A}_{t'}} \cap \overline{\Omega^+(u)}$. On the other hand, since $u$ is a viscosity solution to problem \eqref{eqn:MainPb}, \eqref{eqn:SmallnessCond2}, if $t'<1$ by \eqref{eqn:solISFlatAtContactStrictSupersol} it holds $w_{t'} > u$ on $\overline{\mathcal{A}_{t'}} \cap \overline{\Omega^+(u)}$ and by continuity (up to the boundary) $w_{t'} \ge u + \eta$ on the same set, for some $\eta > 0$. Taking again into account \eqref{eqn:solIsFlatAtContactContinuity} and \eqref{eqn:solIsFlatAtContactSchauder} we have reached a contradiction, hence $t' = 1$.

Since $w_t = 0$ on $\partial \mathcal{A}_t \setminus \partial B_1$ for all $t \in [0, 1]$, the fact that $t' = 1$ implies
\[
\overline{\Omega^+(u)} \cap \overline{B_{1/2}} \subset \overline{B_{1/2}^+} \quad \text{and} \quad u = 0 \text{ on } \overline{B_{1/2}^+} \cap \{x_N = 0\},
\]
which is the desired result if $1-\Gamma > C  \varepsilon$.

If, on the other hand, $1-\Gamma < - C \varepsilon$, one can argue similarly as before and build a family of (moving) barriers $v_t$ which are strict comparison subsolutions for u for all $t \in [0, 1]$ satisfying $\vert D_a v_t \vert \ge 1 + C \varepsilon/2$. By construction, $v_0$ touches $u$ from below at $x = 0$, which contradicts the condition on $\vert D_a u \vert$. This implies that, if $\varepsilon_0, \delta_0$ are sufficiently small, only the first case can occur.
\end{proof}

As a consequence of Lemma \ref{lemma:solIsFlatAtContact}, we have the following

\begin{corollary}\label{crl:NormDeriv1AtDetach}
    Let $u$ be a variational solution to problem \eqref{eqn:MainPb}, \eqref{eqn:SmallnessCond1}, \eqref{eqn:SmallnessCond2}, and suppose that the origin is a detachment point for the free boundary. Then, $\partial_N u(0) = 1$.
\end{corollary}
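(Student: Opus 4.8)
The plan is to combine Corollary~\ref{crl:BlowUpHalfPlane} with Lemma~\ref{lemma:solIsFlatAtContact} applied at small scales. Since the origin is a detachment point, by \eqref{eqn:0DetachmentCond} and Corollary~\ref{crl:BlowUpHalfPlane} there is a sequence $r_n\to 0^+$ such that the rescalings $u_{r_n}$ converge (locally uniformly, and in the stronger senses of Lemma~\ref{lemma:BlowUpConverges}) to the half-plane solution $u_\infty(x)=\alpha(x\cdot e_N)\,1_{\{x_N\ge 0\}}$ for some $\alpha\in(0,1]$. The heart of the matter is to show that $\alpha=1$; once this is known, the local uniform convergence $u_{r_n}\to u_\infty$ forces $\partial_N u(0)=\lim_n \partial_N u_{r_n}(0)$ — more precisely, the flatness $u_{r_n}\to \alpha x_N^+$ transfers through the Lipschitz bound and the linear behavior at the boundary (Lemma~\ref{lemma:linearBehavior}(ii), or Remark~\ref{rmk:BoundBelowfNotPositive} when $f\not\equiv 0$) to give $\partial_N u(0)=\alpha=1$.

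To prove $\alpha=1$, I would argue by contradiction. Suppose $\alpha\ne 1$, say $\alpha<1$ (the case $\alpha>1$ being analogous, or already excluded by the energy-comparison argument at the end of Corollary~\ref{crl:BlowUpHalfPlane}). Fix the universal constants $\varepsilon_0,\delta_0,C$ from Lemma~\ref{lemma:solIsFlatAtContact} and pick $\varepsilon\in(0,\varepsilon_0]$ small enough that $|1-\alpha|>2C\varepsilon$. By the convergence $u_{r_n}\to u_\infty=\alpha x_N^+$ in $L^\infty_{\loc}$ together with the Hausdorff convergence of the (closures of the) positivity sets in Lemma~\ref{lemma:BlowUpConverges}(iv), for $n$ large we have in $B_1$
\[
(\Gamma_n x_N-\varepsilon)^+\le u_{r_n}\le (\Gamma_n x_N+\varepsilon)^+
\]
with $\Gamma_n\to\alpha$, so in particular $|1-\Gamma_n|>C\varepsilon$ for $n$ large; moreover $u_{r_n}$ is a viscosity solution of the rescaled problem \eqref{eqn:MainPb} (the rescaled coefficients $a^{ij}(r_n\cdot)$, weight $Q(r_n\cdot)$ and right-hand side $r_n f(r_n\cdot)$ still satisfy the smallness conditions, using $a^{ij}(0)=\delta^{ij}$, $Q(0)=1$, and that Hölder seminorms shrink under rescaling — cf.\ Remark~\ref{rmk:scalingHypLinBehavAtBd}). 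Hence Lemma~\ref{lemma:solIsFlatAtContact} applies to $u_{r_n}$ and yields
\[
\overline{\Omega^+(u_{r_n})}\cap\overline{B_{1/2}}=\{x_N\ge 0\}\cap\overline{B_{1/2}}.
\]
Rescaling back, $\overline{\Omega^+(u)}\cap\overline{B_{r_n/2}}=\{x_N\ge 0\}\cap\overline{B_{r_n/2}}$, which contradicts the second line of the detachment condition \eqref{eqn:0DetachmentCond} (there exist free boundary points in $B_r^-$ for every $r>0$). Therefore $\alpha=1$.

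Finally, with $\alpha=1$ in hand, Corollary~\ref{crl:BlowUpHalfPlane} gives $u_{r_n}\to x_N^+$ locally uniformly; combined with the uniform Lipschitz bound of Lemma~\ref{lemma:LocalLipschitz} and the one-sided linear asymptotics at the boundary point $0$ from Lemma~\ref{lemma:linearBehavior}(ii) (so that $u(x)=\theta(1+o(1))x_N$ non-tangentially with $\theta$ necessarily equal to the blow-up slope $\alpha=1$), we conclude $\partial_N u(0)=1$. The step I expect to be the main obstacle is verifying cleanly that the rescaled data $(a^{ij}(r_n\cdot),Q(r_n\cdot),r_nf(r_n\cdot))$ satisfy the hypotheses \eqref{eqn:SmallnessCond1}--\eqref{eqn:SmallnessCond2} needed to invoke Lemma~\ref{lemma:solIsFlatAtContact} on the fixed ball $B_1$ — i.e.\ that the smallness is preserved (indeed improved) under the blow-up — and that $u_{r_n}$ remains a viscosity solution, which follows from Lemma~\ref{lemma:SolVariationalIsViscous} applied to the rescaled variational problem.
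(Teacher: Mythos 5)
Your proposal is correct and follows the same route as the paper's (very terse) proof: existence of $\partial_N u(0)$ via Lemma~\ref{lemma:linearBehavior}, identification of the normal derivative with the blow-up slope $\alpha$ via Corollary~\ref{crl:BlowUpHalfPlane} and Lemma~\ref{lemma:SolVariationalIsViscous}, and a contradiction with the second line of the detachment condition \eqref{eqn:0DetachmentCond} via Lemma~\ref{lemma:solIsFlatAtContact} applied at small scales if $\alpha\neq1$. You merely fill in the scaling bookkeeping that the paper leaves implicit.
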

\begin{proof}
    To begin with, $\partial_N u(0)$ exists by Lemma \ref{lemma:linearBehavior}. Then, the statement follows by contradiction. Indeed, if $\partial_N u(0) \neq 1$, by Lemma \ref{lemma:SolVariationalIsViscous} and Corollary \ref{crl:BlowUpHalfPlane} after a suitable scaling the hypotheses of Lemma \ref{lemma:solIsFlatAtContact} would be satisfied, thus reaching a contradiction.
\end{proof}

When dealing with points of contact with the obstacle, one key observation is that the proof of Harnack's inequality for the unconstrained free boundary in \cite{DeSilva:FreeBdRegularityOnePhase} (or \cite{DeSilvaFerrariSalsa:2PhaseFreeBdDivergenceForm} for the case of elliptic operators in divergence form) still holds provided the free boundary of the barrier is not allowed to touch the obstacle. This, in turn, can be translated in a condition on the flatness of the solution.

As in \cite{ChangLaraSavin:BoundaryRegularityOnePhase} we begin with the following

\begin{lemma}\label{lemma:ImprovFlatnFixScaleOneSide}
Let $u$ be a viscosity solution to \eqref{eqn:MainPb}, \eqref{eqn:SmallnessCond2}, $\sigma \in \R$, $\varepsilon > 0$ be constants such that $ \sigma > - \varepsilon/2$ and

\[
(x_N + \sigma)^{+} \le u \le (x_N + \sigma + \varepsilon)^{+} \quad \text{in } B_1\ .
\]
There exist universal constants $\varepsilon_0 > 0$ and $0<\theta<1$ such that, if $ a+b < \varepsilon_0$, then in $B_{1/20}$
\[
\left(x_N + \sigma + \theta \varepsilon/2\right)^{+} \le u \quad \text{or} \quad u \le \left(x_N + \sigma + ( 1 - \theta/2) \varepsilon \right)^{+}.
\]
\end{lemma}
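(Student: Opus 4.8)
\emph{Strategy.} The proof is the divergence–form analogue of De~Silva's Harnack inequality for one–phase solutions (cf.\ \cite{DeSilva:FreeBdRegularityOnePhase,DeSilvaFerrariSalsa:2PhaseFreeBdDivergenceForm}), the only genuinely new point being the placement of the sliding barriers so that their free boundaries never reach the obstacle $\{x_N=0\}$. We may assume $|\sigma|\le\varepsilon$: if $\sigma\ge\varepsilon$, the free boundary of $u$ lies at distance $\ge\varepsilon$ below $\{x_N=0\}$, so near it $u$ solves an unconstrained one–phase problem and the classical Harnack inequality of \cite{DeSilva:FreeBdRegularityOnePhase,DeSilvaFerrariSalsa:2PhaseFreeBdDivergenceForm} gives the dichotomy directly. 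Normalizing so that the two planes are $\{x_N+\sigma=0\}$ and $\{x_N+\sigma+\varepsilon=0\}$, fix the interior point $\bar x:=\tfrac15 e_N$ and the annulus $\Acal:=B_{3/4}(\bar x)\setminus\overline{B_{1/20}(\bar x)}$, which contains $B_{1/20}(0)$. Since $\sigma>-\varepsilon/2$ and $\varepsilon$ is small, $\bar x_N+\sigma>\tfrac1{10}$, so $B_{1/10}(\bar x)\subset\{u>0\}$ and $\dist\big(B_{1/10}(\bar x),\{x_N=0\}\big)\ge\tfrac1{10}$; in particular $Lu=f\le0$ in $B_{1/10}(\bar x)$ with $\|f\|_{L^\infty}$ small. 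We split into \emph{Case 1}, $u(\bar x)\ge\bar x_N+\sigma+\tfrac\varepsilon2$, and \emph{Case 2}, $u(\bar x)\le\bar x_N+\sigma+\tfrac\varepsilon2$, and prove the first, resp.\ the second, alternative.

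\emph{Interior Harnack and barrier profile.} Let $\ell_\bullet$ (for $\bullet\in\{\sigma,\sigma+\varepsilon\}$) solve $L\ell_\bullet=f$ in a neighbourhood of $\overline\Acal$ with datum $x_N+\bullet$; by Schauder, $\|\ell_\bullet-(x_N+\bullet)\|_{C^{1,\alpha}(\overline\Acal)}\le C\varepsilon\delta$. Using $f\le0$ and the maximum principle, $u\ge\ell_\sigma$ in Case 1 (resp.\ $u\le\ell_{\sigma+\varepsilon}$ in Case 2) on $B_{1/10}(\bar x)$; since moreover the value at $\bar x$ of the nonnegative $L$–harmonic difference is $\ge\tfrac\varepsilon2-C\varepsilon\delta$, the interior Harnack inequality for $L$ gives, for a universal $c_0>0$,
$$u\ge\ell_\sigma+c_0\varepsilon\ \ (\text{Case 1}),\qquad u\le\ell_{\sigma+\varepsilon}-c_0\varepsilon\ \ (\text{Case 2}),\qquad\text{on }B_{1/20}(\bar x).$$
Here the hypothesis $f\le0$ on $B_1^+$, together with Remark~\ref{rmk:BoundBelowfNotPositive}, is what makes this comparison two–sided. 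Fix also a radial profile $P\in C^2(\overline\Acal)$ — a small perturbation of $x\mapsto c(|x-\bar x|^{-\gamma}-(3/4)^{-\gamma})$ with $\gamma$ large — with $LP>0$ in $\Acal$ (as in Lemma~\ref{lemma:barriers}), $P=1$ on $\partial B_{1/20}(\bar x)$, $P=0$ on $\partial B_{3/4}(\bar x)$, $\mu_0\le P\le1$ on $B_{1/20}(0)$, and $\partial_N P\ge c_1>0$ on $\{|x_N|\le C\varepsilon\}\cap\Acal$.

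\emph{Case 1: the lower plane improves.} For $s\in\R$ set $\underline v_s:=\ell_\sigma-c_0\varepsilon+c_0\varepsilon P+s$ on $\Acal$. Then $L\underline v_s>f$ (by $LP>0$ and \eqref{eqn:SmallnessCond2}), while $\partial\{\underline v_s>0\}\subset\{|x_N|\le C\varepsilon\}$ where $\partial_N P>0$, so $\partial_N\underline v_s>1$ and $|D_a\underline v_s|>Q$ on $\partial\{\underline v_s>0\}$; thus $\underline v_s$ is a strict comparison subsolution. For $s\ll0$, $\underline v_s<u$ on $\overline\Acal\cap\overline{\{u>0\}}$; let $\bar s$ be the largest $s$ with $\underline v_s\le u$ there. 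By the standard sliding argument, if $\bar s<c_0\varepsilon$ there would be a contact point, which cannot lie on $\partial B_{3/4}(\bar x)$ or $\partial B_{1/20}(\bar x)$ (by the Harnack estimate), nor inside $\{u>0\}\cap\Acal$ (strong maximum principle), nor on $\partial\{u>0\}\cap\Acal$ (as $u$ is a viscosity supersolution, whose definition forbids contact from below by a strict comparison subsolution on all of $\overline{\{u>0\}}$). Hence $\bar s\ge c_0\varepsilon-C\varepsilon\delta$, and evaluating $\underline v_{\bar s}\le u$ on $B_{1/20}(0)$ (where $P\ge\mu_0$), and using the flatness to control the zero set of $u$, gives $(x_N+\sigma+c'\varepsilon)^+\le u$ on $B_{1/20}(0)$ with $c':=c_0\mu_0-C\delta>0$.

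\emph{Case 2: the upper plane improves, and the main obstacle.} For $s\in\R$ set $\bar v_s:=\ell_{\sigma+\varepsilon}+c_0\varepsilon-c_0\varepsilon P+s$ on $\Acal$; then $L\bar v_s<f$, and since $\partial_N P>0$ on $\partial\{\bar v_s>0\}\subset\{|x_N|\le C\varepsilon\}$ one has $\partial_N\bar v_s<1$ and $|D_a\bar v_s|<Q$ there, so $\bar v_s$ is a strict comparison supersolution. For $s\gg0$, $\bar v_s>u$ on $\overline\Acal$; let $\bar s$ be the least $s$ with $\bar v_s\ge u$ on $\overline\Acal$, so $\bar v_{\bar s}$ touches $u$ from above somewhere. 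As in Case 1 the contact point is excluded from the two spheres and from $\{u>0\}\cap\Acal$. \emph{The difficulty is a possible contact on $\partial\{u>0\}\cap\Acal$:} since $u$ is a viscosity subsolution only on $\overline{\{u>0\}}\setminus\{x_N=0\}$, a contact on $\partial\{u>0\}\cap\{x_N=0\}$ would \emph{not} yield a contradiction. This is precisely where $\sigma>-\varepsilon/2$ enters: if $\bar s>s^\ast:=c_0\varepsilon\mu_0-\tfrac\theta2\varepsilon$, then, using $-\sigma<\tfrac\varepsilon2$ and $0\le P\le1$, the free boundary $\{\bar v_{\bar s}=0\}$ lies in
$$\big\{x_N=-\sigma-\varepsilon+c_0\varepsilon P-\bar s\big\}\ \subset\ \big\{x_N<\varepsilon\big(c_0(1-\mu_0)+\tfrac\theta2-\tfrac12\big)\big\}\ \subset\ \{x_N<0\},$$
provided $\theta<1-2c_0(1-\mu_0)$; hence any contact of $\bar v_{\bar s}$ with $\partial\{u>0\}$ occurs at a point with $x_N<0$, where $u$ \emph{is} a genuine viscosity subsolution and the contact is forbidden. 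Therefore $\bar s\le s^\ast$, and $\bar v_{s^\ast}\ge u$ on $B_{1/20}(0)$ (where $P\ge\mu_0$) gives $u\le(x_N+\sigma+(1-\tfrac\theta2)\varepsilon)^+$ there. Choosing $\theta:=\min\{2(c_0\mu_0-C\delta),\,1-2c_0(1-\mu_0)\}$ — a positive universal constant once $\delta$ is small — and fixing $\varepsilon_0$ and $\delta$ small enough that all the smallness requirements above hold, the two alternatives hold with this common $\theta$. The main obstacle, and the only place where the one–sided free boundary condition on $\{x_N=0\}$ (hence the hypothesis $\sigma>-\varepsilon/2$) is essential, is exactly this control of the free boundary of the upper barrier: keeping it strictly below the obstacle throughout the sliding.
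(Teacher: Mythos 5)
Your proof follows essentially the same route as the paper's: fix $\bar x=\tfrac15 e_N$, obtain a gain of order $\varepsilon$ near $\bar x$ by interior Harnack, build a radial barrier on the annulus $B_{3/4}(\bar x)\setminus B_{1/20}(\bar x)$ (a regularisation of $|x-\bar x|^{-\gamma}$), slide it, and — in the ``dangerous'' case where the supersolution must come down from above — invoke $\sigma>-\varepsilon/2$ to force the barrier's free boundary to stay strictly in $\{x_N<0\}$, so that any contact at the free boundary of $u$ happens where $u$ \emph{is} a genuine viscosity subsolution. This is exactly the structural point the paper isolates (``the barrier is strictly positive at $\overline{B_{3/4}}(\bar x)\cap\overline{\Omega^+(u)}\cap\{x_N=0\}$ for any $t$''), and you identify and use it correctly in your Case 2. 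You also correctly observe that the lower (subsolution) barrier needs no such restriction, since the supersolution notion in the paper is tested on all of $\overline{\Omega^+(u)}$ including $\{x_N=0\}$.

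Minor discrepancies worth flagging, none of which affects the conclusion: your reduction removes the case $\sigma\ge\varepsilon$ by appealing directly to the unconstrained De~Silva Harnack, whereas the paper reduces to $\sigma\le 1/10$ via a pure interior Harnack argument; both are legitimate, though the reason the De~Silva Harnack still applies when the obstacle is present (the contact with a supersolution at $\{x_N=0\}$ happens at a point where $u>0$) deserves one sentence. Second, the location of the free boundary of $\bar v_{\bar s}$ should be $x_N=-\sigma-\varepsilon-c_0\varepsilon(1-P)-\bar s+O(\varepsilon\delta)$; you dropped the constant $-c_0\varepsilon$ coming from the $c_0\varepsilon$ term in $\bar v_s$. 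The corrected formula makes the bound strictly more favourable, so your constraint $\theta<1-2c_0(1-\mu_0)$ is in fact not needed — any $\theta<1$ works at this step. Third, the assertion that $f\le 0$ ``makes the comparison two-sided'' via Remark~\ref{rmk:BoundBelowfNotPositive} is a little imprecise: the paper's Harnack step does not use the sign of $f$, only $\|f\|_{L^\infty}\le\varepsilon\delta$, and Remark~\ref{rmk:BoundBelowfNotPositive} concerns the linear-growth Lemma~\ref{lemma:linearBehavior}, not the interior Harnack estimate here; the cleaner thing (as in the paper) is to apply the Harnack inequality with small right-hand side to the nonnegative function $p+\varepsilon-u$. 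Finally, the radial profile you call $P$ is the paper's $w$, not the barriers of Lemma~\ref{lemma:barriers}. These are all cosmetic; the argument is sound and is the paper's argument.
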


\begin{proof}
If $\sigma > 1/10$ the result follows from the interior Harnack inequality (see e.g. \cite{DeSilva:FreeBdRegularityOnePhase, Velichkov:RegularityOnePhaseFreeBd}). Hence, without loss of generality we can assume $\sigma \le 1/10$.

The improvement of the flatness from below can be dealt with similarly as in \cite{DeSilva:FreeBdRegularityOnePhase, DeSilvaFerrariSalsa:2PhaseFreeBdDivergenceForm}, and does not require the condition $\sigma \ge -\varepsilon/2$. For this reason, we just report the proof of the improvement from above, and at the end we highlight the main differences and analogies.

Let $p(x) \coloneqq x_N + \sigma$, $\overline{x} \coloneqq 1/5 e_N$ and let us distinguish two cases: 

\[
p(\overline{x}) + \varepsilon/2  \ge u(\overline{x}) \quad \text{and} \quad p(\overline{x}) + \varepsilon/2  < u(\overline{x})
\]

Let us begin with the hypothesis $p(\overline{x}) + \varepsilon/2  \ge u(\overline{x})$. The thesis is proved via a barrier argument. First of all, notice that by the hypotheses $p(x) + \varepsilon - u \ge 0$ in $B_1^+$. Moreover, since $u > 0$ in $B_{1/10}\left(\overline{x}\right)$, it holds that

\[
L(p(x) + \varepsilon - u) = \diverg\left((A-I)e_N\right) - f \quad \text{in } B_{1/10}\left(\overline{x}\right),
\]
with $\| (A-I)e_N \|_{L^{\infty}(\overline{B_1})} + \| f \|_{L^{\infty}(\overline{B_1})} \le \varepsilon \delta$ by \eqref{eqn:SmallnessCond2}. Hence, applying the interior Harnack inequality (e.g. \cite[Theorems 8.17 and 8.18]{GilbargTrudinger:Elliptic98}) we deduce that

\[
p(x) + \varepsilon - u \ge c \frac{\varepsilon}{2} - C \varepsilon \delta \quad \text{in } \overline{B_{1/20}}\left(\overline{x}\right)
\]
for universal constants $c, C > 0$. Thus, for $\varepsilon_0, \delta_0$ sufficiently small it holds that

\begin{equation}\label{eqn:BdHarnackBoundBelow}
p(x) + \varepsilon - u \ge c \frac{\varepsilon}{2} \quad \text{in } \overline{B_{1/20}}\left(\overline{x}\right),
\end{equation}
for some (small) universal constant $c>0$.

Now introduce the function

\[
w(x) \coloneqq
\begin{cases}
1 & \text{in } B_{1/20}\left(\overline{x}\right), \\
C (x - \overline{x})^{-\gamma} - (3/4)^{-\gamma} & \text{in } \overline{B_{3/4}}\left(\overline{x}\right) \setminus B_{1/20}\left(\overline{x}\right),
\end{cases}
\]
where $C>0$ is chosen such that $w$ is continuous, and $\gamma>0 \in \N$ is fixed sufficiently large so that $\Delta w > 0$ in $B_{3/4}(\overline{x}) \setminus B_{1/20}\left(\overline{x}\right)$.

As in \cite{DeSilvaFerrariSalsa:2PhaseFreeBdDivergenceForm}, we build the barrier 
\[
\Psi_t \coloneqq v_t + \phi_t
\]
as a sum of two parts: $v_t$ (which includes the function $w$) with strictly positive laplacian in $B_{3/4}(\overline{x}) \setminus B_{1/20}\left(\overline{x}\right)$, and $\phi_t$ which allows to transfer an analogous (but with opposite sign) property for the operator $L$ to $\Psi_t$, namely $L\Psi_t < 0$ in $B_{3/4}(\overline{x}) \setminus B_{1/20}\left(\overline{x}\right)$. More precisely $v_t$ is defined as
\begin{equation}\label{eqn:BdHarnackDefvt}
v_t \coloneqq p(x) + \varepsilon + c \frac{\varepsilon}{2} (1-w) - c \frac{\varepsilon}{2} t, \quad t \in [0, 1]
\end{equation}
while $\phi_t$ is defined by the problem
\begin{equation}\label{eqn:Barrier3ndPartDef}
\begin{cases}
L\phi_t = - div\left((A-I)\nabla v_t\right) & \text{in } B_{3/4}(\overline{x}), \\
\phi_t = 0 & \text{on }
\partial B_{3/4}(\overline{x}) .
\end{cases}
\end{equation}
By elliptic regularity we have
\begin{equation}\label{eqn:BdHarnackphiC1Unifest}
\| \phi_t \|_{C^{1, \alpha}\left(\overline{B^{3/4}}(\overline{x})\right)} \le C \varepsilon \delta
\end{equation}
uniformly in $t$, for some universal constant $C>0$.

Our aim now is to prove that
\begin{equation}\label{eqn:BdHarnacktoProve}
\Phi_t^+ \ge u \text{ in } \overline{B_{3/4}}(\overline{x}) \text{ for all } t \in [0, 1].
\end{equation}
Indeed, if such condition holds true, choosing $t=1$ and by the definition of $v_t$ we get that
\[
u \le (x_N + \sigma + \varepsilon - c \frac{\varepsilon}{2} w + \Phi_1)^+ \quad \text{in } \overline{B_{3/4}}(\overline{x}),
\]
but since $w>\delta>0$ in $\overline{B_{1/2}}(\overline{x})$, choosing $\varepsilon_0, \delta_0$ sufficiently small it holds that
\[
u \le (x_N + \sigma + \varepsilon - \theta \frac{\varepsilon}{2})^+ \quad \text{in } \overline{B_{1/2}}(\overline{x}),
\]
for some universal constant $0 < \theta \le 1$, which is the thesis.

Now we proceed to prove \eqref{eqn:BdHarnacktoProve} by contradiction. In particular, we will prove a reformulation of \eqref{eqn:BdHarnacktoProve}, that is
\begin{equation}\label{eqn:BdHarnacktoProve2}
u<\Phi_t^+ \text{ in } \overline{B_{3/4}}(\overline{x}) \cap \overline{\Omega^+(u)} \text{ for all } t \in [0, 1).
\end{equation}
To this aim the key condition $\sigma > -\varepsilon/2$ comes into play. Indeed, it guarantees that the barrier is \emph{strictly positive} at $\overline{B_{3/4}}(\overline{x}) \cap \overline{\Omega^+(u)} \cap \{ x_N = 0 \}$ \emph{for any} $t \in [0, 1]$. Namely, the barrier cannot touch $u$ from above at $\{ x_N = 0\}$.

To begin with, we notice that by \eqref{eqn:BdHarnackDefvt} and \eqref{eqn:BdHarnackphiC1Unifest}, for $\varepsilon_0, \delta_0$ sufficiently small it holds that
\[
u \le \Phi_0^+ \quad \text{in } \overline{B^{3/4}}(\overline{x}).
\]
Let $\overline{t} \in [0, 1]$ be the largest $t$ such that $u \le \Phi_t^+$ in $\overline{B_{3/4}}(\overline{x})$ and suppose by contradiction that $\overline{t} < 1$. Let $x'$ be the touching point. If $\overline{t} < 1$, by \eqref{eqn:BdHarnackDefvt}, \eqref{eqn:Barrier3ndPartDef} and the hypotheses we have $u < \Phi_{\overline{t}}^+$ on $\partial B_{3/4}(\overline{x})$, so that that $x' \notin \partial B_{3/4}(\overline{x})$.
Now, a direct computation combined with \eqref{eqn:BdHarnackphiC1Unifest} shows that
\[
\vert D_a \Psi_{\overline{t}} \vert < 1 - c \varepsilon \quad \text{on } \overline{B_{3/4}}(\overline{x}) \cap \{ x_N \le 1/10 \} ,
\]
for some universal constant $c$ and $\varepsilon_0, \delta_0$ sufficiently small. Now, we notice also that

\begin{align*}
& \overline{B_{3/4}}(\overline{x}) \cap \partial\{ u = 0\} \subset \overline{B_{3/4}}(\overline{x}) \cap \{ x_N \le 1/10 \}, \\
& L\Psi_{\overline{t}} < 0 \quad \text{in } B_{3/4}(\overline{x}) \setminus B_{1/20}\left(\overline{x}\right), \\
& \Psi_{\overline{t}} > u = 0 \quad \text{on }\overline{B^{3/4}}(\overline{x}) \cap \overline{\Omega^+(u)} \cap \{ x_N = 0 \} \quad \text{(from the condition $a \ge b$)},
\end{align*}
and recalling that $u$ is a viscosity solution of problem \eqref{eqn:MainPb}, the above properties imply that $x' \notin B_{3/4}(\overline{x}) \setminus B_{1/20}\left(\overline{x}\right) \cap \overline{\Omega^+(u)}$.
Hence, necessarily $x' \in \overline{B_{1/20}}\left(\overline{x}\right)$, but this would lead a contradiction with \eqref{eqn:BdHarnackBoundBelow}, for $\varepsilon_0$ sufficiently small. 
We have just shown that $x' \notin \overline{B_{3/4}}(\overline{x})$. Hence, \eqref{eqn:BdHarnacktoProve2} is proved and so is \eqref{eqn:BdHarnacktoProve}.\medskip

The proof of the case $p(\overline{x}) + \varepsilon/2  \le u(\overline{x})$ can be dealt with in a similar way, considering the following family of barriers from below $\Psi_t \coloneqq v_t + \phi_t$, with

\[
v_t \coloneqq p(x) - c \frac{\varepsilon}{2} (1-w) + c \frac{\varepsilon}{2} t, \quad t \in [0, 1]
\]
and $\phi_t$ defined exactly as in the previous case.

Notice that in this case the condition $\sigma \ge -\varepsilon/2$ is no longer needed, since on the set $\overline{B^{3/4}}(\overline{x}) \cap \overline{\Omega^+(u)} \cap \{ x_N = 0 \}$, the weaker free boundary condition $\vert D_a u \vert \le 1$ actually helps to reach the contradiction.
\end{proof}

%
%



The improvement of flatness at fixed scale stated in Lemma \ref{lemma:ImprovFlatnFixScaleOneSide} does not seem suitable for iteration, or in other words, useful to prove a compactness result. The main reason is the condition $\sigma \ge -\varepsilon/2$, which does not necessarily hold true after an iteration. Nonetheless, reasoning as in \cite{ChangLaraSavin:BoundaryRegularityOnePhase}, such condition can be compensated by a \emph{linear} bound from below for $u$, i.e. a bound which is invariant by scaling. In particular, the bound given by Lemma \ref{lemma:linearBehavior} (ii) can be used to improve Lemma \ref{lemma:ImprovFlatnFixScaleOneSide} into the following partial Harnack result.

\begin{lemma}\label{lemma:ImprovFlatnFixScale}
Let $u$ be a viscosity solution to \eqref{eqn:MainPb}, \eqref{eqn:SmallnessCond2}, $x_0 \in B_{1/2}$ and suppose that
\[
(x_N + \sigma)^+ \le u \le (x_N + \sigma + \varepsilon)^{+} \quad \text{in } B_{1/2}\left( x_0 \right)
\]
for some $\sigma \in \R$ and $\varepsilon > 0$.

There exist universal constants $\varepsilon' > 0$, $\rho > 0$ and $0<\delta<1$ such that, if $ \varepsilon < \varepsilon'$, then in $B_{\rho}\left( x_0 \right)$
\[
(x_N + \sigma + \delta \varepsilon)^+ \le u \quad \text{or} \quad u \le (x_N + \sigma + (1 - \delta) \varepsilon)^{+}.
\]
\end{lemma}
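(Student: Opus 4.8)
The plan is: whenever $\sigma\ge-\varepsilon/2$, to deduce the statement from the fixed-scale improvement already proved in Lemma~\ref{lemma:ImprovFlatnFixScaleOneSide} (after localizing it from $B_1$ to $B_{1/2}(x_0)$); and in the residual range $\sigma<-\varepsilon/2$, in which the condition required by that lemma fails, to use instead the scale-invariant linear lower bound supplied by Lemma~\ref{lemma:linearBehavior}(ii) together with $\partial_N u(0)=1$ from Corollary~\ref{crl:NormDeriv1AtDetach}. A few configurations are disposed of first.

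\emph{Trivial and interior configurations.} If $B_{c_0}(x_0)\subseteq\{u>0\}$ for a universal $c_0>0$ (for instance when $x_0$ lies more than $c_0$ above the thin slab that contains $\partial\{u>0\}$), then $Lu=f$ there, and applying the interior Harnack inequality for $L$ to the nonnegative $L$-supersolutions $u-(x_N+\sigma)$ and $(x_N+\sigma+\varepsilon)-u$---whose right-hand sides $\diverg((A-I)e_N)\mp f$ are $O(\varepsilon\delta)$ by \eqref{eqn:SmallnessCond2}---one gets the dichotomy in a smaller ball exactly as in the case $\sigma>1/10$ of Lemma~\ref{lemma:ImprovFlatnFixScaleOneSide}. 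If instead $B_{c_0}(x_0)\cap\overline{\{u>0\}}=\emptyset$, the second alternative is trivial. So we may assume $\partial\{u>0\}\cap B_{c_0}(x_0)\neq\emptyset$; recalling that $u>0$ in $B_1^+$, the flatness then forces $\sigma\ge-\varepsilon$ (if $\sigma<-\varepsilon$, the slab $\{-\sigma-\varepsilon\le x_N\le-\sigma\}$ containing $\partial\{u>0\}$ lies inside $\{x_N>0\}$, while $u>0$ in $B_1^+$ puts $\partial\{u>0\}$ in $\{x_N\le0\}$), and $\partial\{u>0\}\cap B_{1/2}(x_0)\subseteq\{-\sigma-\varepsilon\le x_N\le\min(0,-\sigma)\}$---so the free boundary lies on or just below the obstacle $\{x_N=0\}$.

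\emph{Case $\sigma\ge-\varepsilon/2$.} If $\sigma$ exceeds a universal constant, then $\partial\{u>0\}$ sits at height $\approx-\sigma$, uniformly away from $\{x_N=0\}$, so in $B_{c_0}(x_0)$ the problem \eqref{eqn:MainPb} reduces to an ordinary one-phase problem for $L$ (the relevant barriers remain in $\{x_N<0\}$) and the conclusion is the classical partial Harnack inequality of \cite{DeSilva:FreeBdRegularityOnePhase,DeSilvaFerrariSalsa:2PhaseFreeBdDivergenceForm}. Otherwise $\partial\{u>0\}$ is within $O(\varepsilon)$ of the obstacle: one translates $x_0$ horizontally to a nearby point $x_0'\in\{x_N=0\}$, dilates a fixed ball centered at $x_0'$ onto $B_1$, and performs a linear change of coordinates normalizing $a^{ij}$ at the new origin. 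Each of these operations preserves \eqref{eqn:SmallnessCond1}--\eqref{eqn:SmallnessCond2} up to a universal factor---dilation only decreases the $C^{0,\alpha}$-norms, and the affine map is $I+O(\varepsilon\delta)$, so the tilt it induces on $e_N$ is reabsorbed into $\varepsilon$---and produces a viscosity solution of \eqref{eqn:MainPb} with the same obstacle and offset $\tilde\sigma=c\,\sigma\ge-\tilde\varepsilon/2$. Lemma~\ref{lemma:ImprovFlatnFixScaleOneSide} now applies and gives the dichotomy in $B_{1/20}$ of the normalized picture, i.e.\ in $B_\rho(x_0)$ for a universal $\rho>0$.

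\emph{Case $-\varepsilon\le\sigma<-\varepsilon/2$, and the main difficulty.} Here the upper-bound barrier of Lemma~\ref{lemma:ImprovFlatnFixScaleOneSide} could touch the obstacle from above, and instead I would show that the \emph{first} alternative always holds. Since $\delta<1/2$, $\sigma+\delta\varepsilon<0$, so $(x_N+\sigma+\delta\varepsilon)^+$ is supported in $\{x_N>-\sigma-\delta\varepsilon>0\}$ and is $\le x_N$ there; it therefore suffices to verify $u(x)\ge x_N+\sigma+\delta\varepsilon$ on $B_\rho(x_0)\cap\{x_N>0\}$. By Corollary~\ref{crl:NormDeriv1AtDetach} we have $\partial_N u(0)=1$, so Lemma~\ref{lemma:linearBehavior}(ii) (together with Remark~\ref{rmk:BoundBelowfNotPositive} when $f\not\equiv0$) yields $u(x)\ge(1+o(1))\,x_N$ as $|x|\to0$. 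This bound survives the rescalings $u\mapsto u_r$, whereas the negative drift of $\sigma$ only occurs at scales at which $u$ is already close to the half-plane profile $x_N^+$; hence on $B_\rho(x_0)$ the error in $u(x)\ge(1+o(1))x_N$ can be taken to be a small fraction of $-\sigma-\delta\varepsilon\ge(\tfrac12-\delta)\varepsilon$, and $u(x)\ge x_N+\sigma+\delta\varepsilon$ follows. (Heuristically: for $\sigma<0$ the scale-invariant bound $u\gtrsim x_N^+$ strictly dominates the flat lower bound $(x_N+\sigma)^+$, so it already \emph{is} the improved lower bound.) I expect the crux of the whole proof to be precisely this point---making the qualitative $o(1)$ of Lemma~\ref{lemma:linearBehavior} compete quantitatively with $\varepsilon$; as in \cite{ChangLaraSavin:BoundaryRegularityOnePhase}, the decisive fact is that the linear bound is the only information invariant under rescaling, and it is exactly what both prevents $\sigma$ from dropping below $-\varepsilon$ and upgrades the lower bound once $\sigma<-\varepsilon/2$. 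The leftover bookkeeping (the affine normalization above; keeping the barrier clear of the obstacle for small positive $\sigma$) is routine.
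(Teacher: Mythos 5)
Your overall decomposition --- disposing of the trivial/interior configurations, applying Lemma~\ref{lemma:ImprovFlatnFixScaleOneSide} when $\sigma\ge-\varepsilon/2$, and falling back on a linear lower bound in the residual strip $-\varepsilon\le\sigma<-\varepsilon/2$ --- is the same as the paper's, and the projection argument for off-axis $x_0$ is in the same spirit (the paper projects to $\{x_N=0\}$ and does not dwell on the $a^{ij}$-renormalization, but this part is indeed routine).

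The residual regime $\sigma<-\varepsilon/2$, however, contains a genuine gap, one you flag yourself but do not close, and it is not merely ``bookkeeping.'' To conclude $(x_N+\sigma+\delta\varepsilon)^+\le u$ in $B_\rho(x_0)$ for a \emph{universal} $\rho$ you need a \emph{quantitative} linear lower bound of the form $u\ge(1-C\varepsilon)x_N^+$ at a fixed scale, with $C$ universal. Lemma~\ref{lemma:linearBehavior}(ii), which you invoke, gives only $u\ge\theta(1+o(1))x_N$ with an unquantified $o(1)$ as $|x|\to0$; the rate at which this $o(1)$ decays depends on the particular solution $u$ and is not controlled by $\varepsilon$, so it cannot by itself produce the improvement at a universal scale $\rho$. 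Moreover Corollary~\ref{crl:NormDeriv1AtDetach}, which you use to normalize the slope, requires the origin to be a detachment point --- an assumption that is not part of the hypotheses of the present lemma. The paper instead obtains the quantitative bound from the barrier estimate of Lemma~\ref{lemma:barriers}: since $u\ge(x_N-\varepsilon)^+$ on $\partial B_1\cap\{x_N>0\}$, $u\ge0$ on $\{x_N=0\}$, and $u$ is $L$-superharmonic in $B_1^+$, comparison with the $L$-harmonic barrier $h$ of Lemma~\ref{lemma:barriers} (for which $|\nabla h-e_N|\le C\varepsilon$ in $\overline{B_{1/2}^+}$) yields $u\ge h\ge(1-C\varepsilon)x_N$ in $B_{1/2}^+$, after which one simply chooses $\rho$ so that $(x_N+\sigma+\delta\varepsilon)^+\le(x_N-C\rho\varepsilon)^+\le(1-C\varepsilon)x_N^+$ on $B_\rho$, using $\sigma+\delta\varepsilon\le-(\tfrac12-\delta)\varepsilon$. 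You should replace the qualitative appeal to Lemma~\ref{lemma:linearBehavior} (and drop Corollary~\ref{crl:NormDeriv1AtDetach}) with this barrier comparison; with that substitution your proof becomes essentially the paper's.
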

\begin{proof}
In order to give the proof of the lemma, we need to distinguish several cases. To begin with, let us suppose that $x_0 \in \{ x_N = 0 \}$. Without loss of generality, we can further assume that $x_0 = 0$. Under such assumption, we analyze the following two cases.
\begin{itemize}
    \item[i)] $\sigma > - \varepsilon/2$. In this case, the claim is a direct consequence of Lemma \ref{lemma:ImprovFlatnFixScaleOneSide}.

    \item[ii)] $\sigma \le - \varepsilon/2$. In this case one can use the linear bound from below (see Lemma \ref{lemma:barriers})
    \begin{equation}\label{eqn:LinearBoundBelow}
        (1- C \varepsilon) x_N^+ \le u \quad \text{in } B_{1/2}
    \end{equation}
    for some universal constant $C > 0$. Indeed, choosing $\rho$ small enough, e.g. so that $C \rho < 1/4$, we have
    \[
    \left( x_N - 1/4 \varepsilon \right)^+ \le \left( x_N - C \rho \varepsilon \right)^+ \le \left( 1 - C\varepsilon \right) x_N^+ \le u,
    \]
    which gives the claim. 
    %
    %
\end{itemize}
This concludes the proof for the points $x_0 \in \{ x_N = 0 \}$. Suppose now that $x_0 \in \{ x_N < 0 \}$. In this case, we can proceed with a projection argument. More precisely
\begin{itemize}
    \item[iii)] if $\dist{x_0, \{ x_N = 0 \}} \ge \rho/2$, up to choosing $\varepsilon$ small enough so that $\varepsilon/\rho < \varepsilon_0$, the claim follows from the partial Harnack inequality for the one phase free boundaries \cite{DeSilva:FreeBdRegularityOnePhase} in $B_{\rho/2}(x_0)$.

    \item[iv)] if, on the other hand, $d \coloneqq \dist{x_0, \{ x_N = 0 \}} < \rho/2$, let $x_1$ be the projection of $x_0$ onto $\{ x_N = 0 \}$. Then, we can use the previous analysis (the one for $x_0 \in \{ x_N = 0 \}$) centered at the point $x_1$, and observe that $B_{\rho/2}(x_0) \subset B_{\rho}(x_1)$.
\end{itemize}
We are left to consider the case $x_0 \in \{ x_N < 0 \}$. Once again, we proceed with a projection argument distignuishing two cases.
\begin{itemize}
    \item[v)] if $\dist{x_0, \{ x_N = 0 \}} \ge \rho/2$, up to choosing $\varepsilon$ small enough (depending only on $\rho$), the claim follows from the interior Harnack inequality for harmonic functions in $B_{\rho/2}(x_0)$.

    \item[vi)] if $d \coloneqq \dist{x_0, \{ x_N = 0 \}} < \rho/2$ we can proceed exactly as in point $iv)$.
\end{itemize}
%








\end{proof}


%
%

\section{Improvement of flatness at the obstacle}\label{section:ImprovFlatness}
In this section we exploit the previous Lemma \ref{lemma:ImprovFlatnFixScale} in order to prove the following improvement of flatness result at the points of the obstacle.
\begin{proposition}\label{prop:improvFlatness}
Let $u$ be a viscosity solution to \eqref{eqn:MainPb}, \eqref{eqn:SmallnessCond2} with $0 \in \partial \Omega^+(u)$. Suppose that
\[
\left( \alpha x_N - \varepsilon \right)^{+} \le u \le \left( \alpha x_N + \varepsilon \right)^{+} \quad \text{in } B_1
\]
for some $\varepsilon > 0$ .

Then, for all $0 < \beta < \min\{\alpha, 1/2\}$ there exist constants $\varepsilon_0, \delta_0, c, \rho > 0$ depending on $\beta$ but independent of $u$ such that, if $ \varepsilon < \varepsilon_0$ and $\delta < \delta_0$, then in $B_{\rho}$
\[
\left(\alpha' x_N - \varepsilon \rho^{1+\beta}\right)^+ \le u \le \left(\alpha' x_N + \varepsilon \rho^{1+\beta}\right)^{+},
\]
with
\[
\vert \alpha - \alpha' \vert < c \varepsilon.
\]
\end{proposition}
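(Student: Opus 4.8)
The plan is to run De~Silva's compactness-and-contradiction scheme, with the novelty that the partial Harnack inequality of Lemma~\ref{lemma:ImprovFlatnFixScale} is used to compactify the linearized problem \emph{up to the obstacle} $\{x_N=0\}$, and the improved flatness is read off from the regularity of the limiting thin-obstacle-type problem. Fix $\beta\in(0,\min\{\alpha,1/2\})$ and choose an auxiliary $\beta'\in(\beta,\min\{\alpha,1/2\})$. Suppose the statement fails; keeping $c$ and $\rho$ as parameters to be fixed below, there are $\varepsilon_k\to 0$, $\delta_k\to 0$, data $L_k,Q_k,f_k$ satisfying \eqref{eqn:SmallnessCond1}--\eqref{eqn:SmallnessCond2} with $\varepsilon=\varepsilon_k$, $\delta=\delta_k$, constants $\alpha_k\to\bar\alpha>0$, and viscosity solutions $u_k$ of \eqref{eqn:MainPb} with $0\in\partial\Omega^+(u_k)$ and $(\alpha_k x_N-\varepsilon_k)^+\le u_k\le(\alpha_k x_N+\varepsilon_k)^+$ in $B_1$, for which, with those $c,\rho$, the conclusion fails for every admissible $\alpha'$. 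Since $u_k>0$ in $B_1^+$, the free boundary $\partial\Omega^+(u_k)$ lies in $\{x_N\le 0\}$, and the flatness confines it to the strip $\{|x_N|\le\varepsilon_k/\alpha_k\}$; hence $u_k(0)=0$ and $0\in\partial\Omega^+(u_k)\cap\{x_N=0\}$. Introduce the \emph{linearized functions} $\tilde u_k:=\frac{u_k-\alpha_k x_N}{\varepsilon_k}$ on $\overline{\Omega^+(u_k)}\cap B_{1/2}$, which are uniformly bounded by the flatness.

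Away from $\{x_N=0\}$ the classical interior Harnack inequality controls the oscillation of $\tilde u_k$; \emph{at} the points of $\{x_N=0\}$ this role is played by the partial Harnack inequality of Lemma~\ref{lemma:ImprovFlatnFixScale}. Iterating these dyadically gives a uniform H\"older modulus of continuity for $\tilde u_k$ up to $\{x_N=0\}$ (in the graph sense) and, since $\partial\Omega^+(u_k)\cap B_{1/2}\to\{x_N=0\}\cap B_{1/2}$ in Hausdorff distance, it lets us pass to a subsequence along which the graphs of $\tilde u_k$ converge to the graph of some $\tilde u_\infty\in C(\overline{B_{1/2}^+})$ with $\tilde u_\infty(0)=0$. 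Writing $x_N=\psi_k(x')\le 0$ for $\partial\Omega^+(u_k)$ and $\psi_\infty:=\lim\psi_k/\varepsilon_k\le 0$, evaluating $\tilde u_k$ along the free boundary gives $\tilde u_k(x',\psi_k(x'))=-\alpha_k\psi_k(x')/\varepsilon_k$, which passes to the limit as $\tilde u_\infty(\cdot,0)=-\bar\alpha\,\psi_\infty\ge 0$; since $\psi_\infty(0)=0$, the boundary trace of $\tilde u_\infty$ attains its minimum at the origin, which (once $C^1$-regularity up to $\{x_N=0\}$ is known) forces $\nabla'\tilde u_\infty(0)=0$. This is exactly why the direction $e_N$ is not tilted in the conclusion, only the slope $\alpha$ is adjusted.

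Linearizing $L_k u_k=f_k$ and using \eqref{eqn:SmallnessCond2} with $\delta_k\to 0$ shows $\tilde u_\infty$ is harmonic in $B_{1/2}^+$. Passing to a further subsequence, either $|1-\alpha_k|/\varepsilon_k\to+\infty$, or $s:=\lim(\alpha_k-1)/\varepsilon_k$ is finite (which forces $\bar\alpha=1$). In the first case $|1-\alpha_k|>C\varepsilon_k$ for large $k$, so Lemma~\ref{lemma:solIsFlatAtContact} collapses the free boundary of $u_k$ onto $\{x_N=0\}$ in $B_{1/2}$; hence $\tilde u_\infty=0$ on $\{x_N=0\}$ and, by odd reflection, $\tilde u_\infty$ is smooth up to the boundary near $0$, so $|\tilde u_\infty(x)-(\partial_N\tilde u_\infty(0))x_N|\le C|x|^2$. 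In the second case, linearizing the free boundary conditions $|D_a u_k|=Q_k$ on $\partial\Omega^+(u_k)\cap B_1^-$ and $|D_a u_k|\le Q_k$ on $\partial\Omega^+(u_k)\cap\{x_N=0\}$ (from $|\nabla u_k|^2=\alpha_k^2+2\alpha_k\varepsilon_k\partial_N\tilde u_k+O(\varepsilon_k^2)$ against $Q_k^2=1+O(\varepsilon_k\delta_k)$), together with $\tilde u_\infty(\cdot,0)\ge 0$, shows that $w:=\tilde u_\infty+s\,x_N$ is a viscosity solution of the zero-obstacle (Signorini) thin obstacle problem in $B_{1/2}^+$: $\Delta w=0$ in $B_{1/2}^+$ and $w\ge 0$, $\partial_N w\le 0$, $w\,\partial_N w=0$ on $\{x_N=0\}$. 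By the optimal $C^{1,1/2}$ regularity of the thin obstacle problem, $w\in C^{1,1/2}_{\loc}(\overline{B_{1/2}^+})$ with a universal bound, and since $w(0)=0$ and $\nabla'w(0)=0$, we get $|\tilde u_\infty(x)-(\partial_N\tilde u_\infty(0))x_N|\le C|x|^{3/2}$.

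In either case $|\tilde u_\infty(x)-a_\infty x_N|\le C|x|^{1+\beta'}$ for $x$ near $0$, where $a_\infty:=\partial_N\tilde u_\infty(0)$ and $|a_\infty|\le C$ by the interior/boundary gradient estimates and $\|\tilde u_\infty\|_\infty\le 3$. Put $\alpha':=\alpha_k+\varepsilon_k a_\infty$, so $|\alpha'-\alpha_k|\le C\varepsilon_k$; fix $\rho=\rho(\beta,\beta',N)$ so small that $C\rho^{1+\beta'}\le\tfrac14\rho^{1+\beta}$, set $c:=2C$, and take $k$ so large that $\|\tilde u_k-\tilde u_\infty\|_{L^\infty(B_\rho\cap\Omega^+(u_k))}\le\tfrac14\rho^{1+\beta}$. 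Unwinding the definitions yields $(\alpha'x_N-\varepsilon_k\rho^{1+\beta})^+\le u_k\le(\alpha'x_N+\varepsilon_k\rho^{1+\beta})^+$ in $B_\rho$ with $|\alpha'-\alpha_k|\le c\varepsilon_k$, contradicting the choice of $u_k$. The main obstacle is Steps~2--3: compactifying $\tilde u_k$ \emph{up to} $\{x_N=0\}$ (which is precisely what the partial Harnack inequality of Lemma~\ref{lemma:ImprovFlatnFixScale} provides, in place of the interior one) and identifying the trace condition as a shifted Signorini problem. The threshold $\beta<1/2$ is forced by the optimal $C^{1,1/2}$ regularity of the thin obstacle problem, while $\beta<\alpha$ enters through the $C^{0,\alpha}$ scaling of $A$ and $Q$, which is also what makes this one-step gain compatible with the iteration carried out in Section~\ref{section:Regularity}.
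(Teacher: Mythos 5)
Your proof is correct and follows essentially the same contradiction-compactness scheme as the paper's: linearize $u_k$, use the partial Harnack inequality (Lemma \ref{lemma:ImprovFlatnFixScale}) together with Lemma \ref{lemma:solIsFlatAtContact} to obtain a limit up to $\{x_N=0\}$ in the two regimes $|1-\alpha_k|\lesssim\varepsilon_k$ and $|1-\alpha_k|\gg\varepsilon_k$, identify it (respectively) as a Signorini solution or a half-space harmonic function, and read off the improved flatness from the $C^{1,1/2}$ (resp.\ smooth) boundary regularity. The only cosmetic difference is that you always normalize by $\alpha_k x_N$ and absorb the discrepancy into a shift $s x_N$, whereas the paper normalizes by $x_N$ in the near-unit-slope case via \eqref{eqn:RelTildeuDifferentPts}; this is equivalent.
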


We adapt the idea in \cite{DeSilva:FreeBdRegularityOnePhase}, proceeding by contradiction and studying the behavior, as $\varepsilon \to 0^+$, of the family of normalized perturbations
\[
\tilde{u}_{\varepsilon}(x) \coloneqq \frac{u(x) - \alpha x_N}{\varepsilon} \qquad \text{in } B_{1/2}(y).
\]
By distinguishing two regimes, it is actually sufficient to study the case $\alpha_{\varepsilon} = 1$. Indeed, one can always refer to such case if $\vert 1 - \alpha \vert < C \varepsilon$, since trivially
\begin{equation}\label{eqn:RelTildeuDifferentPts}
        \tilde{u}_{\varepsilon}(x) =  \frac{u(x) - x_N}{\varepsilon} + \frac{1-\alpha}{\varepsilon}x_N  \text{ in } B_{1/2}(y), \text{ with } \left \vert \frac{1-\alpha}{\varepsilon} \right \vert \le C
    \end{equation}
uniformly in $\varepsilon$. On the other hand, in the case $\vert 1 - \alpha \vert > \varepsilon$, by Lemma \ref{lemma:solIsFlatAtContact} the free boundary is flat, hence we can apply the improvement of flatness for L-harmonic functions on a half-ball.

Similarly as in \cite{DeSilva:FreeBdRegularityOnePhase, DeSilvaFerrariSalsa:2PhaseFreeBdDivergenceForm} the proof for $\alpha = 1$ can be subdivided into three main steps: compactness, study of the limit problem, and improvement by contradiction. We proceed in order.

\subsection{Compactness}
An important consequence of Lemma \ref{lemma:ImprovFlatnFixScale} is the following

\begin{corollary}\label{crl:C1aOutsideBall}
Let the quantities $u(x), \varepsilon, \varepsilon', \rho$ and $\delta$ be as in Lemma \ref{lemma:ImprovFlatnFixScale}. Then, for any point $y \in \overline{B_{1/2}}$

\[
\vert \tilde{u}(x) - \tilde{u}(y) \vert \le C \vert x \vert^{\gamma} \text{ for } x \in \overline{\Omega^+(u)} \cap (B_{1/2}(y) \setminus B_{\varepsilon/\varepsilon'}(y)) ,
\]

for some universal constant $C>0$, and $\gamma$ defined by the relation

\[
\rho^{\gamma} = (1-\delta).
\]

\end{corollary}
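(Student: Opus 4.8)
The plan is to derive the claimed $C^{0,\gamma}$-type estimate for the normalized perturbation $\tilde u$ by iterating the partial Harnack inequality of Lemma~\ref{lemma:ImprovFlatnFixScale} at geometrically decreasing scales, exactly as one does in the De Silva scheme. Fix a point $y\in\overline{B_{1/2}}$; after translating, we may assume $y$ is the origin of the local picture. The starting hypothesis (inherited from Lemma~\ref{lemma:ImprovFlatnFixScale}) is a double bound $(x_N+\sigma_0)^+\le u\le (x_N+\sigma_0+\varepsilon)^+$ in $B_{1/2}(y)$ with oscillation $\varepsilon$. The first step is to apply Lemma~\ref{lemma:ImprovFlatnFixScale} once: in $B_\rho(y)$ one gets a new pair of bounds with the same ``slope'' $x_N$ but with oscillation reduced from $\varepsilon$ to $(1-\delta)\varepsilon$, i.e.\ either the lower bound improves by $\delta\varepsilon$ or the upper bound improves by $\delta\varepsilon$. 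The hypotheses of the lemma (flatness $<\varepsilon'$) are stable under this rescaling because the oscillation only shrinks, so the lemma can be reapplied on $B_{\rho^2}(y)$, then $B_{\rho^3}(y)$, and so on.

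The second step is the bookkeeping of this iteration. After $k$ applications we obtain constants $\sigma_k$ with $(x_N+\sigma_k)^+\le u\le (x_N+\sigma_k+(1-\delta)^k\varepsilon)^+$ in $B_{\rho^k}(y)$, and the increments satisfy $|\sigma_{k+1}-\sigma_k|\le (1-\delta)^k\varepsilon$, so the $\sigma_k$ form a Cauchy sequence and $|\sigma_k-\sigma_0|\le C(1-\delta)^{?}$ — more precisely the total variation is controlled by $\sum_j(1-\delta)^j\varepsilon\le \varepsilon/\delta$, and the tail from step $k$ on is $\le C(1-\delta)^k\varepsilon$. Translating back into a statement about $\tilde u=(u-x_N)/\varepsilon$: on the annulus $\overline{\Omega^+(u)}\cap(B_{\rho^k}(y)\setminus B_{\rho^{k+1}}(y))$ we have $|\tilde u(x)-\tilde u(y)|\le C(1-\delta)^k$ (the value $\tilde u(y)$ being identified with the limit $\lim_k\sigma_k/\varepsilon$, or handled by the usual convention at detachment points). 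For such an $x$ one has $|x|\sim \rho^k$, hence $(1-\delta)^k=(\rho^k)^{\gamma}=|x|^\gamma$ with $\gamma$ defined by $\rho^\gamma=1-\delta$, which is exactly the exponent in the statement. The iteration is run only down to the scale $\varepsilon/\varepsilon'$, which is precisely why the conclusion is stated on $B_{1/2}(y)\setminus B_{\varepsilon/\varepsilon'}(y)$ rather than all the way to $y$: below that scale the flatness hypothesis $\varepsilon_k<\varepsilon'$ would fail.

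The third step is to pass from the annulus-by-annulus bound to the stated Hölder bound on the whole region $\overline{\Omega^+(u)}\cap(B_{1/2}(y)\setminus B_{\varepsilon/\varepsilon'}(y))$: this is a routine telescoping/covering argument (two points $x,x'$ in the region lie in comparable dyadic annuli, and one chains the estimates; alternatively one simply notes that the bound on each annulus, together with interior gradient estimates for the $L$-harmonic function $u$ away from the free boundary, upgrades to a genuine modulus of continuity). I would also remark that the oscillation-only-shrinks feature means no smallness of $\delta_0$ beyond that already required by Lemma~\ref{lemma:ImprovFlatnFixScale} is needed, and that the constant $C$ depends only on $\rho,\delta$ and the universal constants, hence is universal.

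The main obstacle, and the step deserving the most care, is verifying that the hypotheses of Lemma~\ref{lemma:ImprovFlatnFixScale} genuinely persist through every iteration — in particular that after rescaling by $\rho^{-k}$ the coefficient bounds \eqref{eqn:SmallnessCond2} still hold (they do, since on smaller balls the $C^{0,\alpha}$ norms of $a^{ij}-\delta^{ij}$ and $Q-1$ only get smaller, and $\|f\|_{L^\infty}$ scales favorably), and that the ``centered at a point of $B_{1/2}$'' version of the lemma applies at each stage with the center being the \emph{same} point $y$ throughout (so one never moves the center, only shrinks the ball). One should also double-check the case distinction in Lemma~\ref{lemma:ImprovFlatnFixScale} is compatible with iteration: whichever of the two alternatives (improvement from below vs.\ from above) occurs at a given step, the resulting configuration is again of the form $(x_N+\sigma)^+\le u\le(x_N+\sigma+\varepsilon_{k+1})^+$, so the next step is set up identically. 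With these points checked, the corollary follows.
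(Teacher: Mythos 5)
Your argument is essentially the paper's proof: iterate Lemma~\ref{lemma:ImprovFlatnFixScale} on the shrinking balls $B_{\rho^i}(y)$, record the oscillation decay $a_i+b_i\le(1-\delta)^i\varepsilon$, and identify the exponent $\gamma$ from $\rho^\gamma=1-\delta$ (the paper does exactly this, with $C=2/\rho^\gamma$). One small internal inconsistency worth fixing: the claim that the flatness hypothesis persists ``because the oscillation only shrinks'' is at odds with your own (correct) later observation that the iteration must stop at scale $\varepsilon/\varepsilon'$ --- what shrinks is the \emph{absolute} oscillation $(1-\delta)^i\varepsilon$, whereas the \emph{relative} flatness $\sim(1-\delta)^i\varepsilon/\rho^i$, which is what the lemma's hypothesis $\varepsilon<\varepsilon'$ controls after rescaling each iterate to the unit ball, actually grows, and this is precisely why the estimate degenerates below scale $\varepsilon/\varepsilon'$. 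The chaining step in your third paragraph is superfluous, since the corollary bounds $|\tilde u(x)-\tilde u(y)|$ only against the fixed center $y$, which the annulus-by-annulus estimate gives directly.
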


\begin{proof}
For the sake of simplicity, we give the proof in the case $y = 0$, since the general case follows analogously.

Let $\overline{N} \in \N$ be the largest natural number satisfying $\varepsilon/\rho^{\overline{N}} \le \varepsilon'$. For each $i = 0, ..., N$ we can iterate Lemma \ref{lemma:ImprovFlatnFixScale} to deduce that

\[
(x_N - b_i)^+ \le u(x) \le (x_N + a_i)^+ \quad \text{in } B_{\rho^i},
\]

where $a_i, b_i \ge 0$ and $a_i + b_i \le (1-\delta)^i \varepsilon$. Thus, in $\overline{\Omega^+(u)} \cap (B_{\rho^i} \setminus B_{\rho^{(i+1)}})$

\[
\frac{\vert \tilde{u}(x) - \tilde{u}(0) \vert}{\vert x \vert^{\gamma}} \le \frac{2}{\rho^{\gamma}}\left(\frac{(1-\delta)}{\rho^{\gamma}}\right)^i,
\]

which gives the desired claim with $C \coloneqq 2/\rho^{\gamma}$.

\end{proof}

The compactness result is contained in the following

\begin{lemma}\label{lemma:compactnessImprovFlatness}
Let $\{u_k\}_{k \in \N}$ be a sequence of viscosity solutions to problem \eqref{eqn:MainPb}, \eqref{eqn:SmallnessCond2}. Suppose that
\[
(x_N - \varepsilon_k)^+ \le u(x) \le (x_N + \varepsilon_k)^+ \text{ in } B_1,
\]
with $\varepsilon_k \to 0$ for $k \to +\infty$. Then, there exists a function $\overline{u} \in C^{0, \beta}\left(\overline{B_{1/2}^+}\right)$ for all $0 < \beta < \gamma$, such that

\begin{itemize}

\item[(i)] $\tilde{u}_k \to \overline{u}$ uniformly on $\overline{B_{1/2}^+} \cap \{x_N \ge \delta \}$ for all $\delta > 0$,   

\item[(ii)] in the $R^{N+1}$-Hausdorff distance

\[
\Gamma_k \coloneqq \left\{ (x, \tilde{u}_k(x)) \in \R^{N+1} : x \in \overline{\Omega^+(u_k)} \right\} \to \Gamma \coloneqq \left\{ (x, \overline{u}(x)) \in \R^{N+1} : x \in \overline{B_{1/2}^+} \right\}.
\]

\end{itemize}

\end{lemma}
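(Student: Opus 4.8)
The plan is to establish the compactness of the family $\{\tilde u_k\}$ by means of the H\"older estimate from Corollary \ref{crl:C1aOutsideBall} together with a diagonal argument, and then to identify the limit of the graphs. First I would observe that, since each $u_k$ is a viscosity solution satisfying $(x_N-\varepsilon_k)^+\le u_k\le(x_N+\varepsilon_k)^+$ in $B_1$, the iteration of Lemma \ref{lemma:ImprovFlatnFixScale} is available at every point $y\in\overline{B_{1/2}}$. Hence Corollary \ref{crl:C1aOutsideBall} gives, for each $k$, the oscillation bound
\[
|\tilde u_k(x)-\tilde u_k(y)|\le C|x-y|^{\gamma}\qquad\text{for } x\in\overline{\Omega^+(u_k)}\cap\big(B_{1/2}(y)\setminus B_{\varepsilon_k/\varepsilon'}(y)\big),
\]
with $C$ and $\gamma$ universal (in particular independent of $k$). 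Since $\varepsilon_k\to0$, on any fixed region $\{x_N\ge\delta\}\cap\overline{B_{1/2}^+}$ this becomes, for $k$ large, a genuine $C^{0,\gamma}$ bound on $\tilde u_k$ with a constant independent of $k$; moreover $\tilde u_k$ is uniformly bounded there because $|\tilde u_k|\le 1$ on $\{u_k>0\}$ by flatness and by the linear lower bound (Lemma \ref{lemma:linearBehavior}(ii) / Lemma \ref{lemma:barriers}) the set $\{u_k=0\}$ stays in a thin strip $\{x_N\le C\varepsilon_k\}$. By Arzel\`a--Ascoli, along a subsequence $\tilde u_k$ converges uniformly on each $\{x_N\ge\delta\}\cap\overline{B_{1/2}^+}$ to some continuous function; letting $\delta\to0$ and using a further diagonal extraction we obtain a single subsequence and a limit $\overline u$ defined on $\overline{B_{1/2}^+}\setminus\{x_N=0\}$.

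Next I would extend $\overline u$ continuously up to $\{x_N=0\}$ and upgrade the convergence. The H\"older bound above, applied with $y$ the projection of $x$ onto $\{x_N=0\}$ (so $|x-y|=x_N$), shows that $\mathrm{osc}$ of $\tilde u_k$ across the strip $\{0\le x_N\le t\}$ is controlled by $t^{\gamma}$ up to the error $\varepsilon_k/\varepsilon'$, which is exactly the content of Corollary \ref{crl:C1aOutsideBall}; passing to the limit this gives $|\overline u(x)-\overline u(x')|\le C|x-x'|^{\gamma}$ for all $x,x'\in\overline{B_{1/2}^+}$ away from the degenerate strip, hence $\overline u$ has a unique $C^{0,\gamma}$ (a fortiori $C^{0,\beta}$ for every $\beta<\gamma$) extension to $\overline{B_{1/2}^+}$. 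The uniform convergence on $\{x_N\ge\delta\}$ for every $\delta$ then automatically promotes to the statement (i), since the discrepancy on the thin remaining strip is estimated by $C\delta^{\gamma}+o_k(1)$, which can be absorbed. This gives part (i).

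For part (ii), the Hausdorff convergence of the graphs, I would argue as follows. The graph $\Gamma_k=\{(x,\tilde u_k(x)):x\in\overline{\Omega^+(u_k)}\}$ decomposes into the portion over $\{x_N\ge\delta\}$ and the portion over the thin strip $\{0\le x_N<\delta\}$ (recall $\overline{\Omega^+(u_k)}$ meets $\{x_N<0\}$ only in a set shrinking to $\{x_N=0\}$, because by Lemma \ref{lemma:solIsFlatAtContact}-type arguments the flatness forces $\overline{\Omega^+(u_k)}\to\{x_N\ge0\}$ in Hausdorff distance). On $\{x_N\ge\delta\}$ the uniform convergence $\tilde u_k\to\overline u$ from (i) gives Hausdorff convergence of the corresponding graph pieces directly. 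On the strip, both $\Gamma_k$ and the target graph piece lie within $C\delta^{\gamma}$ (plus $o_k(1)$) of the set $\{(x,\overline u(x)):x_N=0\}$, by the H\"older bound and the fact that $\{u_k>0\}$ reaches down to within $C\varepsilon_k$ of $\{x_N=0\}$; hence the symmetric difference of $\Gamma_k$ and $\Gamma$ in Hausdorff distance is at most $C\delta^{\gamma}+o_k(1)$, and letting first $k\to\infty$ and then $\delta\to0$ concludes.

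The main obstacle I anticipate is the behavior near $\{x_N=0\}$: a priori $\tilde u_k$ is only controlled on $\overline{\Omega^+(u_k)}$, whose trace on $\{x_N=0\}$ can be complicated (this is precisely the detachment phenomenon), and one must make sure that the normalized functions $\tilde u_k=(u_k-x_N)/\varepsilon_k$ neither blow up nor oscillate wildly as one approaches the obstacle. This is handled by two ingredients already available: the linear lower bound of Lemma \ref{lemma:linearBehavior}(ii) (and Lemma \ref{lemma:barriers}), which confines $\{u_k=0\}$ to a strip of width $O(\varepsilon_k)$ and thus keeps $|\tilde u_k|\le C$, and the scale-invariant H\"older estimate of Corollary \ref{crl:C1aOutsideBall}, which is valid down to scale $\varepsilon_k/\varepsilon'$ and therefore survives the limit. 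Everything else is a routine compactness and diagonalization argument.
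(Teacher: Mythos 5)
Your proposal is correct and follows essentially the same strategy as the paper: equiboundedness of $\tilde u_k$, compactness away from $\{x_N=0\}$ plus diagonal extraction for (i), the uniform modulus of continuity of Corollary \ref{crl:C1aOutsideBall} to extend the limit up to the obstacle, and a decomposition into the region $\{x_N\ge\delta\}$ and a thin strip for the Hausdorff convergence of the graphs in (ii). The one place you deviate is in the interior compactness: the paper invokes interior Schauder estimates (the $\tilde u_k$ solve an elliptic equation with vanishing right-hand side away from the obstacle, giving uniform $C^{1,\alpha}_{\loc}$ bounds, cf. Remark \ref{rmk:C1aRegularityLimitFunction}) and then appeals to compactness in H\"older spaces, whereas you derive equicontinuity entirely from Corollary \ref{crl:C1aOutsideBall}. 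Note that Corollary \ref{crl:C1aOutsideBall} only controls oscillation at scales $\ge \varepsilon_k/\varepsilon'$, so calling it a ``genuine $C^{0,\gamma}$ bound'' on a fixed compact subset is a slight overstatement; however, the estimate does yield asymptotic equicontinuity (for $|x-y|<\varepsilon_k/\varepsilon'$ one can route through an intermediate point at distance $\sim\varepsilon_k/\varepsilon'$ from both and absorb the error, which vanishes as $k\to\infty$), so Arzel\`a--Ascoli still applies. Also, the confinement of $\{u_k=0\}$ to a strip of width $\varepsilon_k$ is an immediate consequence of the flatness hypothesis $(x_N-\varepsilon_k)^+\le u_k$; there is no need to invoke Lemma \ref{lemma:linearBehavior} or Lemma \ref{lemma:solIsFlatAtContact} for that.
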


\begin{proof}
We begin proving $(i)$. Fix $\delta > 0$. First of all, we notice that the sequence $\tilde{u}_k$ is equibounded, i.e.
\[
-1 \le \tilde{u}_k \le 1 \quad \text{in } B_1.
\]
Hence, the uniform convergence on $\overline{B_{1/2}^+} \cap \{x_N \le \delta \}$ is a consequence of the Schauder estimates and the compactness in Hölder spaces. 

Since the modulus of Hölder continuity given by Corollary \ref{crl:C1aOutsideBall} is independent of $\delta > 0$, the limit function $\overline{u}$ can be extended to a Hölder $\beta$-continuous function up to the boundary, for all $0 < \beta < \gamma$.

Now we turn to $(ii)$. By Corollary \ref{crl:C1aOutsideBall}, for any $x \in \overline{B_{1/2}} \cap \{ x_N \le \delta \}$ such that $(x, \tilde{u}_k(x)) \in \Gamma_k$ there exists $y \in \overline{B_{1/2}} \cap \{x_N > \delta \}$ such that
\begin{equation}\label{eqn:compactnessPartTwo}
\vert (x, \tilde{u}_k(x)) -  \vert (y, \tilde{u}_k(y)) \vert \le \left( \frac{\varepsilon_k}{\varepsilon_0} + \delta + C \left(\frac{\varepsilon_k}{\varepsilon_0} + \delta\right)^{\gamma} \right)
\end{equation}
for some universal constant $C > 0$. If $x \in \overline{B_{1/2}} \cap \{x_N > \delta \}$ one can simply choose $x = y$. Hence,
\[
\dist((x, \overline{u}(x)), \Gamma) \le \vert (x, \tilde{u}_k(x)) -  (y, \tilde{u}_k(y)) \vert + \vert (y, \tilde{u}_k(y)) -  (y, \overline{u}(y)) \vert \to 0
\]
for $k \to +\infty$, uniformly on $\Gamma_k$ by point $(i)$, \eqref{eqn:compactnessPartTwo} and choosing $\delta \to 0$.

A similar argument gives that
\[
\dist((x, \overline{u}(x)), \Gamma_k) \le \vert (x, \overline{u}(x)) - (x, \tilde{u}_k(x)) \vert \to 0, \quad \text{for } k \to +\infty
\]
uniformly for $x \in \left(\overline{B_{1/2}^+}\right)$.
\end{proof}

\begin{remark}\label{rmk:C1aRegularityLimitFunction}
    Actually, by the Schauder estimates and the compactness in Hölder spaces, there exists $\alpha>0$ such that the convergence to $\overline{u}$ holds in $C^{1, \alpha}_{loc}\left(B_{1/2}^+\right)$.
\end{remark}

\begin{remark}\label{rmk:aSmaller1Compactness}
An analogous compactness property holds even if the solution $u$ satisfies the weaker bounds
\[
\left(\alpha_k x_N - \varepsilon_k \right)^+ \le u \le \left(\alpha_k x_N + \varepsilon_k \right)^{+},
\]
with $\vert 1 - \alpha_k \vert \le C \varepsilon_k$ for some constant $C > 0$ independent of $k \in \N$, as long as $C \varepsilon_k \le \varepsilon'$ uniformly in $k \in \N$ ($\varepsilon'$ is the one given by Lemma \ref{lemma:ImprovFlatnFixScale}). 

This follows from the trivial bounds
\[
\left(x_N - (C+1)\varepsilon_k \right)^+ \le u \le \left(x_N + (C+1)\varepsilon_k \right)^{+},
\]
which allow to use Lemma \ref{lemma:ImprovFlatnFixScale} and Corollary \ref{crl:C1aOutsideBall} with $\varepsilon = C \varepsilon_k$.
\end{remark}

\subsection{Limit problem}
A fundamental step in the proof of the improvement of flatness, is to recognize the limit problem solved by the limit function $\overline{u}$ introduced in Lemma \ref{lemma:compactnessImprovFlatness}. Intuitively, its gradient at the origin describes the rate of error committed during a blow-up procedure when a chosen and fixed direction $\nu$ is considered (for our purposes $\nu = e_N$).

For this reason, as in \cite{ChangLaraSavin:BoundaryRegularityOnePhase} let us introduce the Signorini (or thin obstacle) problem.
\begin{equation}\label{eqn:SignoriniPb}
\begin{cases}
\Delta u = 0 & \text{in } B_{1/2}^+ , \\
u \ge 0 & \text{on } \overline{B_{1/2}^+} \cap \{ x_N = 0 \} , \\
\partial_{\nu} u \le 0 & \text{on } \overline{B_{1/2}^+} \cap \{ x_N = 0 \} , \\
\partial_{\nu} u = 0 & \text{on } \overline{B_{1/2}^+} \cap \{ x_N = 0 \} \cap \{ u > 0 \} ,
\end{cases}
\end{equation}
where $\nu$ denotes the inner normal to $\partial B_{1/2}^+$. We say that any non-negative function $u \in C^0 \left( \overline{B_{1/2}^+} \right)$ is a \emph{viscosity subsolution} to problem \eqref{eqn:SignoriniPb} if, for any quadratic polynomial $P$ touching $u$ from below at $x \in \overline{B_{1/2}^+}$ it holds that
\begin{align*}
    & \Delta P \le 0 \quad \text{if } x \in B_{1/2}^+, \\
    & \partial_{\nu} P \le 0 \quad \text{if } x \in \overline{B_{1/2}^+} \cap \{ x_N = 0 \} .
\end{align*}
On the other hand, we say that $u$ is a \emph{viscosity supersolution} to \eqref{eqn:SignoriniPb} if, for any quadratic polynomial $P$ such that $P^+$ touches $u$ from above at $x \in \overline{B_{1/2}^+}$, it holds that
\begin{align*}
    & \Delta P \ge 0 \quad \text{if } x \in x \in B_{1/2}^+, \\
    & \partial_{\nu} P \ge 0 \quad \text{if } x \in \overline{B_{1/2}^+} \cap \{ x_N = 0 \} \cap \{ u > 0 \} ,
\end{align*}
and no condition is imposed where $u$ vanishes at $\{ x_N = 0 \}$. We say that a function $u \in C^0 \left( \overline{B_{1/2}^+} \right)$ is a \emph{viscosity solution} to problem \eqref{eqn:SignoriniPb} if it is both a viscosity subsolution and supersolution. We recall that any variational solution (which coincides with the unique viscosity solution) $u$ to problem \eqref{eqn:SignoriniPb} is $C^{1, 1/2}$ regular, as proved by Athanasopoulos and Caffarelli in \cite{AthanasopoulosCaffarelli:SignoriniPb}.

The main content of this section is the following
\begin{lemma}\label{lemma:limitProblem}
The limit function $\overline{u}$ introduced in Lemma \ref{lemma:compactnessImprovFlatness} is a viscosity solution of problem \eqref{eqn:SignoriniPb}.
\end{lemma}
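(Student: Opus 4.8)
The strategy is to verify directly that $\overline u$ satisfies the viscosity sub- and supersolution conditions for \eqref{eqn:SignoriniPb}, transferring the corresponding conditions that each $\tilde u_k$ inherits (in a suitable sense) from the fact that $u_k$ is a viscosity solution of \eqref{eqn:MainPb}. First I would record the easy interior part: on $B_{1/2}^+$ the functions $u_k$ are $L_k$-harmonic where positive, and since the flatness forces $\{u_k>0\}\supset\{x_N>\varepsilon_k\}$, on any fixed $\{x_N\ge\delta\}$ we have $L_k(u_k-x_N)=\diverg((A_k-I)e_N)-f_k$ with right-hand side $O(\varepsilon_k\delta)$ by \eqref{eqn:SmallnessCond2}; dividing by $\varepsilon_k$ and using the $C^{1,\alpha}_{loc}$ convergence of Remark \ref{rmk:C1aRegularityLimitFunction} gives $\Delta\overline u=0$ in $B_{1/2}^+$. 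Also $\overline u\ge 0$ on $\{x_N=0\}$ is immediate from the lower bound $(x_N-\varepsilon_k)^+\le u_k$, which gives $\tilde u_k\ge -1$ but more precisely, evaluated at points of $\{x_N=0\}$ where $u_k>0$, yields $\tilde u_k\ge 0$ there, and in the limit $\overline u\ge0$ on all of $\overline{B_{1/2}^+}\cap\{x_N=0\}$ by continuity and the Hausdorff convergence of graphs in Lemma \ref{lemma:compactnessImprovFlatness}(ii).

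The heart of the matter is the two sign conditions for $\partial_\nu\overline u=-\partial_N\overline u$ on the thin set. For the supersolution condition, suppose a quadratic polynomial $P$ with $P^+$ touching $\overline u$ strictly from above at a point $x_0\in\{x_N=0\}$ with $\overline u(x_0)>0$, and suppose toward a contradiction that $\partial_N P(x_0)>0$ (i.e. $\partial_\nu P<0$). I would lift $P$ back to the level of $u_k$: the function $x_N+\varepsilon_k(P+\text{small quadratic correction})$ is, for $k$ large, a strict comparison supersolution of \eqref{eqn:MainPb} near $x_0$ — its $L_k$-Laplacian is $<f_k$ because $\Delta P>0$ would be forced, wait, here one uses that $P^+$ touching from above combined with $\Delta\overline u=0$ forces $\Delta P\ge 0$, and a genuinely strict competitor is built by subtracting a tiny multiple of $|x-x_0|^2$; the gradient is $e_N+\varepsilon_k\nabla P+o(\varepsilon_k)$, whose $|D_{a_k}|$-norm one computes to be $<Q_k=1+O(\varepsilon_k\delta)$ precisely because $\partial_N P(x_0)>0$ pushes the $e_N$-component down relative to $1$... no: one arranges the construction so that having $\partial_N P(x_0)>0$ is exactly the obstruction. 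Since $x_0$ lies on $\{x_N=0\}$ and $\overline u(x_0)>0$ means (by Hausdorff convergence) that the touching happens strictly inside $\Omega^+(u_k)$, the contact is at an interior free-boundary-free point and the strong maximum principle is violated, a contradiction. The condition where $\overline u$ vanishes on $\{x_N=0\}$ needs no argument, matching the definition. For the subsolution condition, one argues symmetrically: a polynomial $P$ touching $\overline u$ from below at $x_0\in\{x_N=0\}$ with $\partial_N P(x_0)<0$ would, after lifting to $x_N+\varepsilon_k P+o(\varepsilon_k)$, give a strict comparison subsolution touching $u_k$ from below — and here one must check it touches inside $\overline{\Omega^+(u_k)}$, which holds because $\tilde u_k\to\overline u$ and the lower barrier stays below; the violation of the viscosity supersolution property of $u_k$ (via the one-sided free boundary condition $|D_{a_k}u_k|\le Q$ on $\{x_N=0\}$, which is exactly what permits touching from below there) yields the contradiction. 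The case $x_0\in B_{1/2}^+$ in either direction is the standard interior argument already covered by $\Delta\overline u=0$.

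The main obstacle is the careful bookkeeping in the lift: one must verify that the perturbed plane $x_N+\varepsilon_k P+\varepsilon_k(\text{correction})$ is genuinely a \emph{strict} comparison sub/supersolution of \eqref{eqn:MainPb} near $x_0$ for all large $k$ — i.e. simultaneously $L_k(\cdot)\lessgtr f_k$ in the bulk and $|D_{a_k}(\cdot)|\lessgtr Q$ along its free boundary — which requires absorbing the $O(\varepsilon_k\delta)$ errors from \eqref{eqn:SmallnessCond2} and the $o(\varepsilon_k)$ errors from the quadratic correction (à la the $\phi_t$ correction solving $L\phi=-\diverg((A-I)\nabla v)$ used in Lemma \ref{lemma:ImprovFlatnFixScaleOneSide}) into the strict inequality coming from $\partial_N P(x_0)\ne 0$ and from a small quadratic bump making the touching strict. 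A secondary technical point is knowing the contact point of the barrier with $u_k$ converges to $x_0$ and that it lands in the correct region ($\Omega^+(u_k)$ for the supersolution case, $\overline{\Omega^+(u_k)}$ for the subsolution case), which follows from strictness of the touching of $P^+$ with $\overline u$ together with the uniform and Hausdorff convergences of Lemma \ref{lemma:compactnessImprovFlatness}; near $\{x_N=0\}$ one additionally invokes that $\overline u(x_0)>0$ to guarantee the barrier's own free boundary stays away from the contact point, exactly as the condition $\sigma>-\varepsilon/2$ was used in Lemma \ref{lemma:ImprovFlatnFixScaleOneSide}.
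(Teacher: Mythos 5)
Your overall architecture matches the paper's: establish $\Delta\overline u=0$ in the bulk via the $C^{1,\alpha}_{loc}$ convergence of Remark \ref{rmk:C1aRegularityLimitFunction}, then handle the thin‐set normal‐derivative conditions by lifting a touching polynomial $P$ back to the level of $u_k$ through $\phi_k=x_N+\varepsilon_k(\tilde P_k+c'_k)$, using a correction $\tilde P_k$ (solving $L_k\tilde P_k=\Delta P$) to control the variable coefficients, and invoking the viscosity sub/supersolution properties of $u_k$. That is exactly what the paper does.

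However, the execution contains a sign confusion that breaks the argument as written. The paper takes $\nu$ to be the \emph{inner} normal to $\partial B_{1/2}^+$, so on $\{x_N=0\}$ one has $\nu=e_N$ and $\partial_\nu P=\partial_N P$; you instead write $\partial_N P(x_0)>0$ ``(i.e. $\partial_\nu P<0$)'', which silently takes $\nu=-e_N$. As a result, both contradiction hypotheses are reversed. For the subsolution condition (touching from below, aim $\partial_N P\le 0$) the contradiction hypothesis must be $\partial_N P>0$: only then does the lifted barrier satisfy $|D_{a_k}\phi_k|^2\approx 1+2\varepsilon_k\partial_N P>Q_k$, which together with $L_k\phi_k>f_k$ (after the strictness reduction giving $\Delta P>0$) makes $\phi_k$ a genuine strict comparison subsolution touching $u_k$ from below — contradicting the viscosity supersolution property. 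With your assumed $\partial_N P<0$ the gradient goes the wrong way, $\phi_k$ is not a strict subsolution, and no contradiction is obtained. The supersolution paragraph has the same inversion and, moreover, is internally inconsistent (you first assert $\Delta P>0$ is needed for $L_k\phi_k<f_k$, then switch to $\Delta P\ge 0$, then reverse the conclusion of the gradient computation mid‐sentence). Finally, the paper does not argue by contradiction at all: it shows that $\Delta P>0$ (after the strictness reduction) forces the touching point of $\phi_k$ with $u_k$ to lie on $\{u_k=0\}$, at which the viscosity inequality $|D_{a_k}\phi_k|\le Q$ directly yields $1+2\varepsilon_k\partial_N P+o(\varepsilon_k)\le 1+C\varepsilon_k\delta_k$, hence $\partial_N P\le 0$. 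This direct derivation is cleaner, and in particular makes it transparent why the strictness reduction $\Delta P\gtrless 0$ is indispensable — it is what excludes interior touching — a point your writeup treats only obliquely.
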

\begin{proof}
To begin with, we notice that, since the functions $\tilde{u}_k$ solve in $H^1(B_{1/2}^+)$
\[
L_k \tilde{u}_k = \frac{1}{\varepsilon_k} \left[ \diverg\left( (A - I) e_N \right) + f \right] \to 0 \quad \text{by \eqref{eqn:SmallnessCond2}},
\]
thanks to the local $C^{1, \beta}$ convergence stated in Remark \ref{rmk:C1aRegularityLimitFunction} it holds that
\[
\Delta u = 0 \quad \text{in } H^1, \text{ locally in } B_{1/2}^+.
\]
As a consequence, by interior elliptic regularity theory $u \in C^{\infty}_{\loc}(B_{1/2}^+)$ and $\Delta u = 0$ in the classical sense.

Now we prove the first claim. Thanks to the regularity just proved for $u$, it sufficies to check the behavior of the touching polynomials at $\overline{B_{1/2}^+} \cap \{ x_N = 0 \}$. Moreover, it is sufficient to check the definitions of sub/supersolutions for touching polynomials $P$ with $\Delta P > 0$ or $\Delta P < 0$ and touching strictly from below/above, respectively. Indeed, if for instance $P$ is a polynomial touching $\overline{u}$ from below at $\overline{x}$, then
\[
\tilde{P}(x) \coloneqq P(x) - \eta \vert x - \overline{x} \vert^2 - \eta (x_N - \overline{x}_N) + C(\eta) (x_N - \overline{x}_N)^2
\]
touches $\overline{u}$ strictly from below and satisfies $\Delta \tilde{P}(x) > 0$, for some $\eta>0$ sufficiently small and $C(\eta)>0$ sufficiently large depending on $\eta$. The case of polynomials touching from above can be treated analogously. Then, the results for the normal derivative of $P(x)$ will follow letting $\eta \to 0$.

We now prove that $\overline{u}$ is a subsolution to \eqref{eqn:SignoriniPb}. Let $P$ be a quadratic polynomial touching $\overline{u}$ from below at $\overline{x} \in \overline{B_{1/2}^+} \cap \{ x_N = 0 \}$. From Lemma \ref{lemma:compactnessImprovFlatness} (ii) we can deduce the existence of points $\overline{x}_k \in \overline{\Omega^+(u_k)}$, $\overline{x}_k \to \overline{x}$ and of constants $c_k \to 0$ such that $P_k \coloneqq P + c_k$ touches $\tilde{u}_k$ from below at $\overline{x}_k$. In order to exploit this property, we need to transfer the properties of $\Delta P$ to $LP$. To this aim, let us introduce the functions $\tilde{\phi}_k$ defined as
\begin{equation}\label{eqn:PolynomialSubstiteSubsol}
\begin{cases}
L_k \tilde{P}_k = \Delta P_k & \text{in } B_{r}\left(\overline{x}\right), \\
\tilde{P}_k = P_k & \text{on } \partial B_{r}\left(\overline{x}\right),
\end{cases}
\end{equation}
where $r>0$ sufficiently small but fixed. By the Schauder estimates (up to the boundary) and \eqref{eqn:SmallnessCond2}, 
\begin{equation}\label{eqn:LimitPbSchauderEst}
\| \tilde{P}_k - P_k \|_{C^{1, \alpha}\left(\overline{B_r}\right)} \le C \varepsilon_k \delta_k
\end{equation}
for some universal constant $C$. Up to other constants $c'_k \to 0$, the functions $\tilde{P}_k + c_k'$ touch $\tilde{u}_k$ from below at some points $\overline{x}_k' \in \overline{\Omega^+(u_k)}$ satisfying $\overline{x}_k' \to \overline{x}$. Hence, the functions $\phi_k \coloneqq \varepsilon_k (\tilde{P}_k + c'_k) + x_N$ touch $u_k$ from below at $\overline{x}_k'$.

Now, since by definition $L_k \tilde{P}_k = \Delta P_k > 0$ and the functions $u_k$ are viscosity solutions to \eqref{eqn:MainPb}, it holds that $\overline{x}_k' \in \overline{\Omega^+(u_k)} \cap \{ u_k = 0 \}$. This in turn implies that $\vert D_{a_k} \tilde{P}_k \vert^2 \le 1 + \varepsilon_k \delta_k$ at $\overline{x}_k'$. Moreover, \eqref{eqn:SmallnessCond2} and \eqref{eqn:LimitPbSchauderEst} imply that $\left\vert \vert D_{a_k} \tilde{P}_k \vert^2 - \vert \nabla P_k \vert^2 \right\vert \le C \varepsilon_k \delta_k$. Hence,
\[
1 + 2 \varepsilon_k \partial_{X_N} P + o(\varepsilon_k) \le 1 + C \varepsilon_k \delta_k \quad \text{at } \overline{x}_k',
\]
which implies the claim, that is
\begin{align*}
\partial_{\nu} P(\overline{x}) \le 0 .&\qedhere
\end{align*}
\end{proof}

\begin{remark}\label{rmk:LimitNormDerVanishAtDetach}
    If the origin is a detachment point for the free boundary, then $\partial_{\nu} \overline{u}(0) = 0$ in the viscosity sense. Indeed, by Lemma \ref{lemma:limitProblem} it is sufficient to prove that $\partial_{\nu} \overline{u}(0) \ge 0$. To this aim, the above arguments can be modified as follows. 
    
    Instead of solving problem \eqref{eqn:PolynomialSubstiteSubsol} over $B_r$, it can be solved over $B_r^+$. Then, a competitor over the whole ball can be obtained simply applying an extension theorem for $C^{1, \alpha}$ functions (e.g. \cite[Chapter VI]{Stein1970:SingularIntegralsAndDifferentiability}). Notice that the $C^{1, \alpha}$ norm of the extension is controlled by the one inside $B_r^+$, thus it is uniformly bounded in $k \in \N$. This change allows the competitors to be \emph{stricly positive} in $(B_r \cap \{ x_N = 0\}) \setminus \{ 0 \}$. Then the statement follows similarly as above, taking into account Corollary \ref{crl:NormDeriv1AtDetach}.
\end{remark}

\subsection{Improvement}

Following \cite{DeSilva:FreeBdRegularityOnePhase, ChangLaraSavin:BoundaryRegularityOnePhase, DeSilvaFerrariSalsa:2PhaseFreeBdDivergenceForm}, as a consequence of Lemmas \ref{lemma:compactnessImprovFlatness} and \ref{lemma:limitProblem} we can complete the proof of Proposition \ref{prop:improvFlatness}.

\begin{proof}[Proof of Proposition \ref{prop:improvFlatness}.]
Let us proceed by contradiction. Consider a sequence of functions $u_k$ solutions of \eqref{eqn:MainPb}, \eqref{eqn:SmallnessCond2} with $\varepsilon_k, \delta_k \to 0$ and 
\[
\left( \alpha_k x_N - \varepsilon_k \right)^{+} \le u \le \left( \alpha_k x_N + \varepsilon_k \right)^{+} \quad \text{in } B_1,
\]
but for which the thesis does not hold. Then, consider two cases.
\begin{itemize}
    \item[(i)] There exists a subsequence such that $\vert 1 - \alpha_k \vert \le C \varepsilon_k$. In such case, define the functions
    \[
    \tilde{u}_k \coloneqq \frac{u_k - x_N}{\varepsilon_k}.
    \]
    By \eqref{eqn:RelTildeuDifferentPts}, Remark \ref{rmk:aSmaller1Compactness} and Lemma \ref{lemma:limitProblem} there exists a uniform and Hausdorff limit $\overline{u}$, with $\| \overline{u} \|_{L^{\infty}\left( \overline{B_{1/2}^+} \right)} \le 2$ and solving the Signorini problem \eqref{eqn:SignoriniPb}. Hence, by \cite{AthanasopoulosCaffarelli:SignoriniPb} we have $\| \overline{u} \|_{C^{1, 1/2}\left( \overline{B_{1/2}^+} \right)} \le C$ for some universal constant $C > 0$.

    \item[(ii)] There exists a subsequence such that $\vert 1 - \alpha_k \vert > C \varepsilon_k$. Define the functions
    \[
    \tilde{u}_k \coloneqq \frac{u_k - \alpha_k x_N}{\varepsilon_k}.
    \]
    In this case, Lemma \ref{lemma:solIsFlatAtContact} ensures the existence of a $C^{1, \alpha}\left( \overline{B_{1/2}^+} \right)$ limit $\overline{u}$, which is also a Hausdorff limit in the sense of Lemma \ref{lemma:compactnessImprovFlatness} (ii). Moreover, $\| \overline{u} \|_{C^{1, \alpha}\left( \overline{B_{1/2}^+} \right)} \le C$ for some universal constant $C > 0$.
\end{itemize}
Hence, for the sequence $\tilde{u}_k$ there exists a Hausdorff limit $\overline{u} \in C^{1, \alpha}\left( \overline{B_{1/2}^+} \right)$ with $\overline{u}(0)=0$, $u \ge 0$ on $\{ x_N = 0 \} \cap \overline{B_{1/2}^+}$ and $\| \overline{u} \|_{C^{1, \alpha}\left( \overline{B_{1/2}^+} \right)} \le C$ for some universal constant $C > 0$ for all $0 < \alpha \le 1/2$. In particular
\begin{equation}\label{eqn:ImprovFlatEqn1}
\| \overline{u}(x) - \nabla \overline{u}(0) \cdot x \|_{L^{\infty}\left( \overline{B_r^+} \right)} \le C \vert x \vert^{1+\alpha} \qquad \text{in } \overline{B_{1/2}^+}.
\end{equation}
Let us denote $\nabla \overline{u}(0) = \overline{\alpha} e_N$, with $\vert\overline{\alpha}\vert \le C$. Let $\delta > 0$ arbitrarily small but fixed and choose $r > 0$ sufficiently small such that $C \, r^{\delta} \le 1/2$. Then, for instance in the second case, \eqref{eqn:ImprovFlatEqn1} and the Hausdorff convergence imply that, for $k$ sufficiently large
\[
\overline{\alpha} x_N - r^{1+\alpha-\delta} \le \frac{u_k - \alpha_k x_N}{\varepsilon_k} \le \overline{\alpha} x_N + r^{1+\alpha-\delta} \qquad \text{in } \overline{B_r} \cap \overline{\Omega^+(u_k)}.
\]
Hence,
\[
(\alpha' x_N - \varepsilon_k r^{1 + \alpha-\delta})^+ \le u_k \le (\alpha' x_N - \varepsilon_k r^{1 + \alpha-\delta})^+ \quad \text{in } \overline{B_r},
\]
with $\alpha' \coloneqq \alpha_k + \varepsilon_k \overline{\alpha}$, so that $\vert \alpha' - \alpha_k \vert \le C \varepsilon_k$. We have reached a contradiction. In the first case one has that
\[
(\alpha' x_N - \varepsilon_k r^{1 + \alpha-\delta})^+ \le u_k \le (\alpha' x_N - \varepsilon_k r^{1 + \alpha-\delta})^+ \quad \text{in } \overline{B_r},
\]
with $\alpha' \coloneqq 1 + \varepsilon_k \overline{\alpha}$, so that $\vert \alpha' - \alpha_k \vert \le (1+C) \varepsilon_k$. Again a contradiction.
\end{proof}
\begin{remark}
   The above proof also highlights that the threshold for the regularity of the free boundary is a combination between the sharp regularity for the Signorini problem, namely $C^{1, 1/2}$, and the regularity of the obstacle, which we assume to be $C^{1, \alpha}$. More precisely, if $\alpha \le 1/2$ then the decay to the blow-up will be (at least) of order $r^{1+\alpha - \delta}$, while if $\alpha > 1/2$ it will be of order $r^{3/2 - \delta}$, for any $0 < \delta < 1/2$.
   
   The above result is also almost optimal, in the sense that if $\alpha \le 1/2$ it is immediate to build examples in which the free boundary is $C^{1, \alpha}$ regular and no more (it is sufficient that the free boundary sticks to the obstacle), while in the case $\alpha > 1/2$ it is possible, similarly as in \cite{ChangLaraSavin:BoundaryRegularityOnePhase}, to exhibit examples of free boundaries which are $C^{1, 1/2}$ regular and no more.  
\end{remark}
%
%
%
%

%
%

\section{Regularity for the constrained one-phase problem. Proof of Theorem \ref{thm:NonSharpRegularity}}\label{section:Regularity}

This section is devoted to the proof of Theorem \ref{thm:NonSharpRegularity}, which is actually a consequence of the existence and rate of decay of the blow-up at each point of the free boundary, near points where the free boundary is flat. Such information can be obtained combining Proposition \ref{prop:improvFlatness} with the partial Harnack inequality for the one-phase Bernoulli problem (see for instance \cite{DeSilva:FreeBdRegularityOnePhase, Velichkov:RegularityOnePhaseFreeBd}), and is the content of the following

\begin{corollary}\label{crl:RateConvergBlowUp}
Let $u$ be a viscosity solution to problem \eqref{eqn:MainPb}, \eqref{eqn:SmallnessCond1} and \eqref{eqn:SmallnessCond2}, with $0 \in \overline{\Omega^+(u)}$. There exist universal constants $\varepsilon_0, \delta_0, \beta, C >0$ such that, if 
\[
(\alpha x_N - \varepsilon)^+ \le u \le (\alpha x_N + \varepsilon)^+ ,
\]
for some $\alpha \in \R$, then the following holds: for all $x_0 \in \partial\{ u>0\} \cap B_{1/4}$ there exists  $\overline{\alpha} \in \R$ and $\nu \in S^{N-1}$ (depending on $x_0$) such that 
\begin{align*}
\| u_{r} - \overline{\alpha} \nu & x_N^+ \|_{L^{\infty}\left(B_1(x_0)\right)} \le C r^{\beta} \quad \text{for all } r \in (0, 1/4), \\
& \vert \alpha - \overline{\alpha} \vert + \vert \nu - e_N \vert \le C \varepsilon.
\end{align*}
\end{corollary}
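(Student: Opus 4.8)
The plan is to combine the two improvement-of-flatness mechanisms available near the hyperplane $\{x_N=0\}$ into a single decay estimate. First I would observe that the hypothesis $(\alpha x_N-\varepsilon)^+\le u\le(\alpha x_N+\varepsilon)^+$ in $B_1$, together with Corollary \ref{crl:BlowUpHalfPlane} and the normalization \eqref{eqn:SmallnessCond1}--\eqref{eqn:SmallnessCond2}, forces $|1-\alpha|\le C\varepsilon$ (otherwise Lemma \ref{lemma:solIsFlatAtContact} would already collapse the free boundary to the obstacle on a fixed scale, and one could then simply apply the improvement of flatness for $L$-harmonic functions on a half-ball). So, after a harmless rescaling, we may assume $u$ is $\varepsilon$-flat in the direction $e_N$ with slope close to $1$, and $\varepsilon$ small.

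The second step is a clean dichotomy at each free boundary point $x_0\in\partial\{u>0\}\cap B_{1/4}$. If $x_0$ lies strictly inside $B_1^-$, i.e. $\operatorname{dist}(x_0,\{x_N=0\})>0$, then on a small ball around $x_0$ not touching $\{x_N=0\}$ the function $u$ is simply a flat viscosity solution of the standard one-phase Bernoulli problem for $L$, so the classical partial Harnack inequality and improvement of flatness of \cite{DeSilva:FreeBdRegularityOnePhase, Velichkov:RegularityOnePhaseFreeBd} (in the divergence-form version) give the geometric decay $\|u_r-\overline\alpha\nu\cdot x^+\|_{L^\infty(B_1(x_0))}\le Cr^\beta$ with $|\nu-e_N|+|\alpha-\overline\alpha|\le C\varepsilon$. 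If instead $x_0\in\{x_N=0\}$, I would apply Proposition \ref{prop:improvFlatness} iteratively: at each dyadic scale $\rho^k$, the proposition upgrades $\varepsilon_k$-flatness (in some direction with slope $\alpha_k$) to $\varepsilon_k\rho^{1+\beta}$-flatness at the next scale, with $|\alpha_{k+1}-\alpha_k|\le c\varepsilon_k$. Since $\varepsilon_k=\varepsilon\rho^{k\beta}$ is summable, the slopes $\alpha_k$ form a Cauchy sequence converging to some $\overline\alpha$ with $|\alpha-\overline\alpha|\le C\varepsilon$, and the normalized nonnegativity of the flatness slopes keeps us in the regime $|1-\alpha_k|\le C\varepsilon$ where Proposition \ref{prop:improvFlatness} (via Remark \ref{rmk:aSmaller1Compactness}) continues to apply. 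One must be a little careful when the free boundary detaches: there Remark \ref{rmk:LimitNormDerVanishAtDetach} and Corollary \ref{crl:NormDeriv1AtDetach} guarantee $\overline\alpha=1$ and $\nu=e_N$ exactly, and the decay is inherited from Lemma \ref{lemma:solIsFlatAtContact} plus half-ball regularity; in the non-detached case one still has a well-defined limiting half-plane $\overline\alpha\,\nu\cdot x^+$.

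The third step is to glue the two cases so that the decay holds uniformly for all $x_0\in\partial\{u>0\}\cap B_{1/4}$ and all $r\in(0,1/4)$. For a boundary point $x_0\in B_1^-$ with $d:=\operatorname{dist}(x_0,\{x_N=0\})$, one runs the interior iteration for scales $r\gtrsim d$ and the boundary iteration (centered at the projection of $x_0$ onto $\{x_N=0\}$) for scales $r\lesssim d$, matching the two at scale $\sim d$; this is exactly the projection argument already used in the proof of Lemma \ref{lemma:ImprovFlatnFixScale}, and the exponent $\beta$ can be taken to be the minimum of the two. Passing from the flatness decay at dyadic scales $\rho^k$ to all $r\in(0,1/4)$ is standard interpolation between consecutive scales, absorbing the loss into the constant $C$ and (if needed) slightly decreasing $\beta$. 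The main obstacle is the bookkeeping across scales in the boundary iteration: one has to verify that the direction of flatness stays in a cone around $e_N$ of aperture $O(\varepsilon)$ throughout (so that Proposition \ref{prop:improvFlatness} keeps applying with universal constants and the slopes stay near $1$), which is where the smallness \eqref{eqn:SmallnessCond2} and the summability of $\varepsilon_k$ are used together; the convergence $u_r\to\overline\alpha\,\nu\cdot x^+$ and the rate then follow by a routine telescoping estimate.
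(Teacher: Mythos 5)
Your proposal follows essentially the same strategy as the paper's proof: reduce to slope near $1$ via Lemma \ref{lemma:solIsFlatAtContact}, iterate Proposition \ref{prop:improvFlatness} at points on $\{x_N=0\}$ (with the geometric series giving a Cauchy sequence of slopes converging to $\overline\alpha$), and use a projection argument combined with the classical one-phase improvement of flatness for points in $\{x_N<0\}$. One small slip in the gluing step: you write ``interior iteration for scales $r\gtrsim d$ and boundary iteration for $r\lesssim d$,'' but it is the reverse --- for $r\lesssim d$ the ball $B_r(x_0)$ stays in $\{x_N<0\}$ so the interior one-phase iteration applies, while for $r\gtrsim d$ one iterates Proposition \ref{prop:improvFlatness} at the projected point $x_1\in\{x_N=0\}$ (which is within distance $d\lesssim r$ of $x_0$), matching the two regimes at scale $\sim d$ exactly as you say.
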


\begin{proof}
    If $x_0 \in \{ x_N = 0 \}$, the proof consists in an iteration of Proposition \ref{prop:improvFlatness}, once one notices that the sequence of $\alpha_k$ generated at each step $k \in \N$ is Cauchy, and that the error $\vert \overline{\alpha} - \alpha_k \vert$ on the slope is of the same order of the translation error $\varepsilon_k \coloneqq \varepsilon \rho^{k\beta}$, where $\beta$ is the one given by Proposition \ref{prop:improvFlatness}. In particular, in this case $\nu = e_N$.

    Now let us consider the case $x_0 \in \{ x_N < 0 \}$. Thanks to Lemma \ref{lemma:solIsFlatAtContact}, we can assume without loss of generality that $1 - \alpha \le C \varepsilon$. We proceed by a projection argument, and distinguish two regimes:
    \begin{itemize}
        \item[i)] if $\dist\left(x_0, \partial \{ u>0 \} \cap \{ x_N = 0 \} \right) \ge \rho/2$, where $\rho$ is as in Proposition \ref{prop:improvFlatness}, then we iterate the improvement of flatness for the one-phase Bernoulli problem (e.g. \cite{DeSilva:FreeBdRegularityOnePhase, Velichkov:RegularityOnePhaseFreeBd});
        %
        %

        \item[ii)] if, on the other hand, $\dist\left(x_0, \partial \{ u>0 \} \cap \{ x_N = 0 \} \right) < \rho/2$, let $x_1$ be the projection of $x_0$ onto $\partial \{ u>0 \} \cap \{ x_N = 0 \}$. We can iterate at $x_1$ Proposition \ref{prop:improvFlatness} $k$ times, until
        \[
        \frac{1}{r^k}\dist\left(x_0, \partial \{ u>0 \} \cap \{ x_N = 0 \} \right) < \rho/2.
        \]
    \end{itemize}
\end{proof}
\begin{proof}[\bf Proof of Theorem \ref{thm:NonSharpRegularity}.]
Once Corollary \ref{crl:RateConvergBlowUp} is established, Theorem \ref{thm:NonSharpRegularity} follows by a standard argument, which can be found for instance in \cite{Velichkov:RegularityOnePhaseFreeBd}.
\end{proof}

\section{Epsilon-regularity in the case of discontinuous weight. Proof of Theorem \ref{t:main-main}}\label{section:main-main}

We first notice that by the classical theory for the one-phase problem (see \cite{AltCaffarelli:OnePhaseFreeBd}), we have that the solution $u$ is Lipschitz and nondegenerate. Next, by a change of coordinates, we suppose that
$E=\{x_N>0\}$ and that 
\[
Q(x) \coloneqq Q_1(x)\chi_{\{ x_N > 0 \}} + Q_2(x) \chi_{\{ x_N \le 0 \}}, 
\]
where $Q_1$ and $Q_2$ are $C^{0,\alpha}$ functions in $B_1$ still satisfying the hypothesis
\begin{equation}\label{e:bounds-on-Q-section-regularity}
1-\delta\le Q_1(x)\le 1\quad\text{and}\quad 2\le Q_2(x)\le 2+\delta\quad\text{in}\quad B_1,\end{equation}
with $\delta$ small enough. The function $u$ is now flat in the direction $e_N$. Precisely,
\begin{equation}\label{e:definition-eps-flat-section-regularity}
\gamma (x_N-\eps)^+\le u(x)\le \gamma (x_N+\eps)^+\quad\text{for every}\quad x\in B_1,
\end{equation}
for some constant $\gamma$ such that 
\begin{equation}\label{e:condition-gamma-section-regularity}
1-\delta\le \gamma\le 2+\delta.\end{equation}
 
Finally, $u$ minimizes the functional
$$J_Q(u):=\int_{B_1}\Big(\nabla u\cdot A(x)\nabla u+Q(x)\ind{\{u>0\}}\Big)\,dx,$$
where $A(x)=\big(a^{ij}(x)\big)_{i,j}$ is a symmetric matrix with  $C^{0,\alpha}$ coefficients such that $a^{ij}(0) = \delta^{ij}$.\\ 
In the half-balls $B_1^+$ and $B_1^-$ the function $u$ is minimizer respectively of 
$$J_+(u):=\int_{B_1^+}\Big(\nabla u\cdot A(x)\nabla u+Q_1(x)\ind{\{u>0\}}\Big)\,dx,$$
$$J_-(u):=\int_{B_1^-}\Big(\nabla u\cdot A(x)\nabla u+Q_2(x)\ind{\{u>0\}}\Big)\,dx.$$
Thus, it is a solution (in viscosity sense) to: 
\begin{equation}\label{e:viscosity-solution-section-regularity}
\begin{cases}
  \textrm{div}(A(x)\nabla u)=0\quad\text{in}\quad B_1\cap\{u>0\}\\
  \ \ \qquad|D_au|=Q_1\quad\text{on}\quad B_1^+\cap\partial\{u>0\}\\
  \ \ \qquad|D_au|=Q_2\quad\text{on}\quad B_1^-\cap\partial\{u>0\},
\end{cases}
\end{equation}
where $|D_au|:=\sqrt{\nabla u\cdot A(x)\nabla u\,}$ . Now, similarly as for problem \eqref{eqn:MainPb}, we distinguish different regimes. In particular, proceeding as in Lemma \ref{lemma:solIsFlatAtContact}, we have the following
\begin{lemma}\label{lemma:ApplQDiscSolFlatAtContact}
There are $\eps_0>0$, $\delta_0>0$ and $C>0$ such that the following holds.\\
Let $\eps\in(0,\eps_0]$ and $\delta\in(0,\delta_0]$, and let $u:B_1\to\R$ be a continuous function satisfying \eqref{e:viscosity-solution-section-regularity} with $Q_1$ and $Q_2$ satisfying \eqref{e:bounds-on-Q-section-regularity}. Suppose that $u$ satisfies also the flatness condition \eqref{e:definition-eps-flat-section-regularity} with $\gamma$ as in \eqref{e:condition-gamma-section-regularity} and $\eps>0$. Then:
\begin{itemize}
    \item[\rm(i)] if $\vert 1 - \gamma \vert \le C \varepsilon$, then $\overline{\{u>0\}} \cap \overline{B_{1/2}} \cap \{ x_N < 0 \} = \emptyset$, \\

    \item[\rm(ii)] if $\vert \sqrt{2} - \gamma \vert \le C \varepsilon$, then $\overline{B_{1/2}} \cap \{ x_N \ge 0 \} \subseteq \overline{\{u>0\}}$, \\

    \item[\rm(iii)] in all other cases $\overline{\{u>0\}} \cap \overline{B_{1/2}} = \{ x_N \ge 0 \} \cap \overline{B_{1/2}}$. \\
\end{itemize} 
\end{lemma}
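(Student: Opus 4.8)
The strategy is to run the sliding-barrier argument from the proof of Lemma~\ref{lemma:solIsFlatAtContact}, but now on both sides of the interface $\{x_N=0\}$, keeping track of the two distinct free-boundary slopes that govern $u$: the slope $\sqrt{Q_1}\in[\sqrt{1-\delta},1]$ on $B_1^+$ and the slope $\sqrt{Q_2}\in[\sqrt2,\sqrt{2+\delta}]$ on $B_1^-$ (these are the slopes appearing in \eqref{e:viscosity-solution-section-regularity}; by \eqref{e:bounds-on-Q-section-regularity} they are separated by the fixed gap $\sqrt2-1>0$, independently of $\delta$). The proof reduces to two claims, each proved by a moving-barrier argument exactly in the spirit of Lemma~\ref{lemma:solIsFlatAtContact}:
\begin{itemize}
\item[(A)] if $\gamma\le\sqrt2-C\varepsilon$, then $\{u>0\}\cap B_{1/2}\subseteq\{x_N>0\}$ (and $u=0$ on $\overline{B_{1/2}^+}\cap\{x_N=0\}$);
\item[(B)] if $\gamma\ge1+C\varepsilon$, then $u>0$ in $B_{1/2}^+$.
\end{itemize}
Granting (A) and (B) and choosing the constant $C$ in the statement larger than the (universal) constants produced by (A) and (B), the trichotomy follows: in case (i) ($|1-\gamma|\le C\varepsilon$) one has $\gamma\le 1+C\varepsilon<\sqrt2-C\varepsilon$, so (A) gives conclusion (i); in case (ii) ($|\sqrt2-\gamma|\le C\varepsilon$) one has $\gamma\ge\sqrt2-C\varepsilon>1+C\varepsilon$, so (B) gives conclusion (ii); in the remaining case both $\gamma>1+C\varepsilon$ and $\gamma<\sqrt2-C\varepsilon$ hold, so (A) and (B) together force $\{u>0\}\cap B_{1/2}=\{x_N>0\}\cap B_{1/2}$, i.e.\ conclusion (iii). (The sub-regimes $\gamma<\sqrt{Q_1}-C\varepsilon$ and $\gamma>\sqrt{Q_2}+C\varepsilon$, formally also covered by ``all other cases'', are vacuous: a strict comparison sub/supersolution of slope just past $\gamma$ but still on the wrong side of $\sqrt{Q_1}$, resp.\ $\sqrt{Q_2}$, would touch $u$ at a point of $\partial\{u>0\}$ and violate the corresponding free-boundary (in)equality in \eqref{e:viscosity-solution-section-regularity}, exactly as in the ``$1-\Gamma<-C\varepsilon$'' alternative of Lemma~\ref{lemma:solIsFlatAtContact}, so no such $u$ exists and there is nothing to prove.)

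For (A) one reproduces the construction of Lemma~\ref{lemma:barriers} and Lemma~\ref{lemma:solIsFlatAtContact}: a one-parameter family of strict comparison supersolutions $w_t$, $t\in[0,1]$, obtained by solving $Lw_t=-\delta<0$ on a domain $\mathcal A_t=\Psi_t(B_1\cap\{x_N\ge-\varepsilon\})$ with boundary datum an $\varepsilon$-perturbation of $\gamma(x_N+\varepsilon)^+$, so that the uniform $C^{1,\alpha}$ estimates give $|D_aw_t|\le\gamma+c\varepsilon<\sqrt2\le\sqrt{Q_2}$ near $\partial\{w_t>0\}$ uniformly in $t$, $w_0\ge u$ on $\overline{\mathcal A_0}\cap\overline{\{u>0\}}$, and $w_t=0$ exactly on the bottom face $\Psi_t(\{x_N=-\varepsilon\}\cap B_1)$, which slides monotonically from $\{x_N=-\varepsilon\}$ up to $\{x_N=0\}$ over $B_{1/2}$ as $t$ goes from $0$ to $1$. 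Letting $t'$ be the largest $t$ with $w_{t}\ge u$ on $\overline{\mathcal A_{t}}\cap\overline{\{u>0\}}$, the viscosity-subsolution property of $u$ rules out a contact at a point of $\partial\{u>0\}\cap\{x_N<0\}$ (there $|D_au|=\sqrt{Q_2}>|D_aw_{t'}|$), the strong maximum principle rules out a contact inside $\{u>0\}$, contact on $\partial B_1$ is excluded by the choice of boundary datum together with the flatness, and a contact on $\{x_N=0\}$ is impossible because $w_{t'}$ vanishes exactly on the (moving) bottom face; hence $t'=1$, and since $w_1=0$ below $\{x_N=0\}$ over $B_{1/2}$ while $w_1\ge u\ge 0$, we get $u\equiv0$ there, which is (A).

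Claim (B) is proved by the mirror construction with a family of strict comparison subsolutions $v_t$ of slope $\gamma-c\varepsilon$, so that $|D_av_t|\ge\gamma-c\varepsilon>1\ge\sqrt{Q_1}$ near $\partial\{v_t>0\}$ whenever that free boundary lies in $\{x_N>0\}$ (here one uses $\gamma\ge1+C\varepsilon$), and whose positivity set $\{v_t>0\}$ grows, while staying below $u$, so as to sweep out $\overline{B_{1/2}^+}$ as the free boundary of $v_t$ is slid downward from $\{x_N=\varepsilon\}$ towards $\{x_N=0\}$. A contact at a point of $\partial\{u>0\}\cap\{x_N>0\}$ is forbidden by the viscosity-supersolution property of $u$ ($|D_au|=\sqrt{Q_1}<|D_av_t|$ there), a contact inside $\{u>0\}$ is forbidden by the maximum principle, contact on $\partial B_1$ is excluded by construction, and any point of $\{x_N=0\}$ at which the sliding is obstructed can only be reached once the free boundary of $v_t$ has itself descended to $\{x_N=0\}$ — i.e.\ only at $t=1$; in all cases $v_1\le u$ with $\{v_1>0\}\supseteq\{x_N>0\}\cap B_{1/2}$, which gives $u>0$ in $B_{1/2}^+$. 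Here the fact that $\gamma$ is bounded away from $\sqrt{Q_1}$ \emph{from above} plays the same role as the condition $\sigma\ge-\varepsilon/2$ in Lemma~\ref{lemma:ImprovFlatnFixScaleOneSide}: it is what allows the subsolution to fill the whole upper half-ball before being obstructed.

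The main obstacle is the same as in Lemma~\ref{lemma:solIsFlatAtContact}: one must keep the entire family of barriers strict comparison sub/supersolutions with slopes in the prescribed open intervals \emph{uniformly in $t\in[0,1]$}, which forces the uniform $C^{1,\alpha}$ bounds for the auxiliary elliptic problems and the smallness of $\delta$ and of $\|a^{ij}-\delta^{ij}\|_{C^{0,\alpha}(B_1)}$ (the latter available by rescaling, since $a^{ij}(0)=\delta^{ij}$ and the statement is local); and one must control the barriers precisely on the hyperplane $\{x_N=0\}$, where $u$ satisfies only the one-sided conditions $|D_au|\le\sqrt{Q_i}$ — for (A) by making $w_t$ vanish exactly on the moving bottom face (so that sliding it produces the inequality $u\le w_t$ all the way down to $\{x_N<0\}$), and for (B) by arranging that the subsolution can be obstructed on $\{x_N=0\}$ only once it already covers $\overline{B_{1/2}^+}$ — since this is precisely the place where a naive barrier argument breaks down. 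A secondary bookkeeping point is to check that the constant $C$ can be chosen to dominate simultaneously the constants from (A) and (B) and to confirm the degenerate-slope sub-cases above are vacuous.
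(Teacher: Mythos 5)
Your proof follows the paper's strategy: the two implications $\gamma \le \sqrt 2 - C\varepsilon \Rightarrow$ (i) and $\gamma \ge 1 + C\varepsilon \Rightarrow$ (ii), obtained by sliding one-parameter families of strict comparison super- and subsolutions across the interface $\{x_N=0\}$ (the paper simply invokes Lemma~\ref{lemma:barriers} together with the sliding argument already spelled out in Lemma~\ref{lemma:solIsFlatAtContact}, while you reconstruct both), with (iii) deduced from the conjunction of the two. Your parenthetical observation that the remaining slivers $\gamma<\sqrt{Q_1}-C\varepsilon$ and $\gamma>\sqrt{Q_2}+C\varepsilon$ of ``all other cases'' are vacuous -- by the analogue of the ``$1-\Gamma<-C\varepsilon$'' alternative of Lemma~\ref{lemma:solIsFlatAtContact} -- is a correct point that the paper's proof leaves implicit and is in fact needed for (iii) to read exactly as stated.
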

\begin{proof}
We fix $C$ as in Lemma \ref{lemma:barriers} and we choose $\eps_0$ and $\delta_0$ small enough. Suppose that 
\begin{equation}\label{e:gamma-bound-in-lemma-regularity-1}
\gamma\le \sqrt 2-C\eps.
\end{equation}
Then, in $B_1$ the function $u(x)$ is bounded from above by $(\sqrt 2-C\eps)(x_N+\eps)_+$ while at the same time, in $B_1^-$ it satisfies (in viscosity sense)  
\begin{equation*}
\begin{cases}
  \textrm{div}(A(x)\nabla u)=0\quad\text{in}\quad B_1^-\cap\{u>0\}\\
  \ \ \qquad|D_au|\ge \sqrt 2\quad\text{on}\quad B_1^-\cap\partial\{u>0\}.
\end{cases}
\end{equation*}
Thus, by Lemma \ref{lemma:barriers}, we obtain (i). \medskip

Suppose next that 
\begin{equation}\label{e:gamma-bound-in-lemma-regularity-2}
\gamma\ge 1+C\eps.
\end{equation}
Then, in $B_1$ the function $u(x)$ is bounded from below by $(1+C\eps)(x_N-\eps)_+$ while at the same time, in $B_1^+$ it satisfies (in viscosity sense)  
\begin{equation*}
\begin{cases}
  \textrm{div}(A(x)\nabla u)=0\quad\text{in}\quad B_1^+\cap\{u>0\}\\
  \ \ \qquad|D_au|\le 1\quad\text{on}\quad B_1^+\cap\partial\{u>0\}.
\end{cases}
\end{equation*}
Thus, using again Lemma \ref{lemma:barriers}, we get (ii). \medskip

Finally, we notice that if we have both \eqref{e:gamma-bound-in-lemma-regularity-1} and \eqref{e:gamma-bound-in-lemma-regularity-2}, then (iii) holds.
\end{proof}

\begin{proof}[\bf Proof of Theorem \ref{t:main-main}]
By the above Lemma \ref{lemma:ApplQDiscSolFlatAtContact}, we have that a solution $u$ to the problem from Theorem \ref{t:main-main} is a minimizer either to $J_+$ with the constraint $\{u>0\}\subset\{x_N>0\}$ or to the functional $J_-$ with the constraint $\{u>0\}\supset\{x_N>0\}$. In the first case, the conclusion follows from \cite{ChangLaraSavin:BoundaryRegularityOnePhase}, while in the second case we apply Theorem \ref{thm:NonSharpRegularity}. 
\end{proof}

\section{Regularity in the case of discontinuous weight. Proof of Theorem \ref{t:main-main-2}}\label{section:main-main-2}
As we already mentioned in the introduction, any variational solution $u$ to \eqref{eqn:VariationalSOlution} is locally Lipschitz continuous and non-degenerate. Up to a change of coordinates we can suppose that $u$ is a variational solutions to
\begin{equation}\label{eqn:PbExample2Viscous}
\begin{cases}
\textrm{div}(A(x)\nabla u) = 0 & \text{in } \{ u>0 \} \cap B_1 , \\
u = g & \text{on } \partial B_1 , \\
\vert D_a u \vert^2 = Q_1 & \text{on } \partial\{ u>0 \} \cap B_1^+ , \\
\vert D_a u \vert^2 = Q_2 & \text{on } \partial\{ u>0 \} \cap B_1^- , \\
Q_1 \le \vert D_a u \vert^2 \le Q_2 & \text{on } \partial\{ u>0 \} \cap \{ x_N = 0 \} \cap B_1 ,
\end{cases}
\end{equation}
with $Q_1$ and $Q_2$ satisfying \eqref{e:bounds-on-Q-section-regularity}. We proceed in several steps.

\noindent{\bf Step 1. Blow-up limits.} For any point $x_0\in\partial\{u>0\}\cap B_1\cap\{x_N=0\}$ we consider the rescalings 
$$u_{r,x_0}(x):=\frac1ru(x_0+rx).$$
Since $u$ is Lipschitz and non-degenerate, we know that every sequence $r_n\to0$ admits a subsequence (still denoted by $r_n$) converging to a non-trivial function
$$u_\infty:\R^N\to\R$$
uniformly in every ball $B_R\subset\R^N$. As usual, we say that $u_\infty$ is a blow-up limit of $u$ at $x_0$. As in Lemma \ref{lemma:BlowUpConverges}, we have that $u_{x_0,r_n}$ converges to $u_\infty$ strongly in $H^1$ and that $\partial\{u_{r_n,x_0}>0\}$ converges to $\partial\{u_{\infty}>0\}$ locally Hausdorff. This entails that any blow-up limit $u_\infty$ is a minimizer of the functional 
$$J_0(u,B_R)=\int_{B_R}\Big(A(x_0)\nabla u\cdot\nabla u+
\big(Q_1(x_0)\ind{\{x_N>0\}}+Q_2(x_0)\ind{\{x_N<0\}}
\big)\ind{\{u>0\}}\Big)\,dx\,,$$
in every ball $B_R$. Moreover, any blow-up limit is $1$-homogeneous. Indeed, up to a change of coordinates, we may assume that $A(x_0)=Id$ and, proceeding as in \cite{Weiss99:PartialRegularityFreeBd}, we have that the Weiss' boundary adjusted energy 
$$W(u,B_r):=\frac1{r^d}J_0(u,B_r)-\frac1{r^{d+1}}\int_{\partial B_r}u^2\,,$$
is (almost-)monotone (see for instance \cite{SpolaorTreyVelichkov2020:RegularityMultiphaseShapeOptimiz}), which gives the homogeneity of $u_\infty$ by a standard argument (see \cite{Weiss99:PartialRegularityFreeBd}). \medskip

\noindent{\bf Step 2. Regular and singular points.} We say that a point $x_0\in\partial\{u>0\}\cap B_1$ is \emph{regular} ($x_0\in\textrm{Reg}(u)$) if there is a blow-up limit $u_\infty$ at $x_0$ of the form 
$$u_\infty(x)=\Gamma(x.\nu)_+$$
for some constant $\Gamma>0$ and some unit vector $\nu\in \R^N$. Otherwise, we will say that $x_0$ is \emph{singular} ($x_0\in\textrm{Sing}(u)$). We next show that when $x_0\in \textrm{Reg}(u)\cap\{x_N=0\}$, then $u_\infty$ can only be of a specific form. Precisely, we have the following:
\begin{lemma}\label{l:regular-blow-ups}
Let $h$ be a blow-up of $u$ at a point $x_0\in\partial\{u>0\}\cap\{x_N=0\}$ such that 
the set $\{h>0\}$ is a half-space. Then, $h$ is of the form $h(x)=\Gamma(x.\nu)_+$, where
$$\frac{Q_1(x_0)}{e_N\cdot A(x_0)e_N}\le \Gamma^2\le \frac{Q_2(x_0)}{e_N\cdot A(x_0)e_N}\qquad\text{and}\qquad \nu=e_N.$$
\end{lemma}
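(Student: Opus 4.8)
The plan is to use that, by Step~1 above, the blow-up $h$ is a nontrivial, nonnegative, $1$-homogeneous local minimizer of the frozen functional
\[
J_0(v,B_R)=\int_{B_R}\Big(A(x_0)\nabla v\cdot\nabla v+\big(Q_1(x_0)\ind{\{x_N>0\}}+Q_2(x_0)\ind{\{x_N<0\}}\big)\ind{\{v>0\}}\Big)\,dx
\]
in every ball, and in particular that $h$ is $L_0$-harmonic in $\{h>0\}$, where $L_0=\mathrm{div}(A(x_0)\nabla\,\cdot\,)$. I will first identify $h$ explicitly, then pin down its direction and slope using domain variations.

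First I would determine the form of $h$. Writing $\{h>0\}=\{x\cdot\nu>0\}$ for a unit vector $\nu$, the function $h$ is $L_0$-harmonic there, vanishes on the boundary hyperplane, is nonnegative and $1$-homogeneous. The linear change of variables $y=A(x_0)^{-1/2}x$ turns $L_0$ into the Laplacian and $\{x\cdot\nu>0\}$ into $\{y\cdot A(x_0)^{1/2}\nu>0\}$; the transformed function is harmonic there, vanishes on the boundary hyperplane, is nonnegative and $1$-homogeneous, so by Schwarz reflection it extends to a $1$-homogeneous harmonic function on $\R^N$, hence to a linear one (a $1$-homogeneous harmonic function is linear), which, vanishing on a hyperplane through the origin, must be proportional to the functional defining that hyperplane. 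Undoing the change of variables gives $h(x)=\Gamma\,(x\cdot\nu)_+$ for some $\Gamma>0$ (positivity from the nondegeneracy of $h$).

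Next I would show $\nu=\pm e_N$. At any free boundary point $p$ of $h$ with $p_N\neq0$, in a small ball where $Q_0$ is the constant $Q_i(x_0)$ ($i=1$ if $p_N>0$, $i=2$ if $p_N<0$), the set $\partial\{h>0\}$ is a flat piece of hyperplane and $h$ is there a minimizer of a constant-coefficient one-phase functional; hence the classical free boundary condition $\vert D_{a_0}h\vert^2=Q_i(x_0)$, i.e.\ $\Gamma^2\,\nu\cdot A(x_0)\nu=Q_i(x_0)$, holds (equivalently: a linear function whose flat free boundary violates this relation is immediately touched by a strict comparison sub/supersolution, contradicting that $h$ is a viscosity solution near $p$). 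If $\nu$ were not parallel to $e_N$, the hyperplane $\{x\cdot\nu=0\}$ would contain such points $p$ both in $\{x_N>0\}$ and in $\{x_N<0\}$, forcing $Q_1(x_0)=Q_2(x_0)$ and contradicting $Q_1\le1<2\le Q_2$. Hence $\nu=\pm e_N$, so $\partial\{h>0\}=\{x_N=0\}$ and $\nu\cdot A(x_0)\nu=e_N\cdot A(x_0)e_N$.

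Finally I would rule out $\nu=-e_N$ and get the bound on $\Gamma$ by domain variations across the jump interface. For $\eta\in C^\infty_c(B_1)$ with $\eta\ge0$ and $\int_{\{x_N=0\}}\eta\,d\mathcal H^{N-1}>0$, set $\Phi_s=\mathrm{id}+s\,\eta e_N$ and $h_s:=h\circ\Phi_s^{-1}$, an admissible competitor, so $J_0(h_s,B_1)\ge J_0(h,B_1)$ for all small $s$. Since $\nabla h=\pm\Gamma e_N$ is constant on $\{h>0\}$, the inner-variation formula gives the smooth (two-sided) identity $\frac{d}{ds}\big|_{s=0}\int_{B_1}A(x_0)\nabla h_s\cdot\nabla h_s\,dx=\pm\,\Gamma^2\,(e_N\cdot A(x_0)e_N)\int_{\{x_N=0\}}\eta\,d\mathcal H^{N-1}$, the sign being $+$ when $\nu=e_N$. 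The measure term, however, is \emph{not} differentiable at $s=0$: the region swept by $\Phi_s$ is a thin slab adjacent to $\{x_N=0\}$ which lies in $\{x_N>0\}$ when $s\eta>0$ and in $\{x_N<0\}$ when $s\eta<0$, so the relevant weight is $Q_1(x_0)$ on one side and $Q_2(x_0)$ on the other; computing the two one-sided derivatives and imposing $J_0(h_s,B_1)\ge J_0(h,B_1)$ separately for $s>0$ and $s<0$ yields, when $\nu=e_N$, the pair $\Gamma^2(e_N\cdot A(x_0)e_N)\ge Q_1(x_0)$ and $\Gamma^2(e_N\cdot A(x_0)e_N)\le Q_2(x_0)$ — which is exactly the claim — and, when $\nu=-e_N$, the reversed pair $Q_2(x_0)\le\Gamma^2(e_N\cdot A(x_0)e_N)\le Q_1(x_0)$, which is impossible since $Q_1\le1<2\le Q_2$. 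Thus $\nu=e_N$, completing the proof. The delicate point, and the only place where the discontinuity of the weight is genuinely used, is precisely this last step: the non-smoothness of $s\mapsto J_0(h_s,B_1)$ at the jump interface, and the asymmetry between its two one-sided derivatives, is what simultaneously forces $\nu=e_N$ and produces the inequality $\frac{Q_1(x_0)}{e_N\cdot A(x_0)e_N}\le\Gamma^2\le\frac{Q_2(x_0)}{e_N\cdot A(x_0)e_N}$.
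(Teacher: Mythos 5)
Your proof is correct and uses the same core tools as the paper — the free boundary condition away from the jump interface, and domain (inner) variations across it — but routes them somewhat more cleanly. The paper first uses the free boundary condition to reduce to the two cases $\{h>0\}=\{x_N>0\}$ or $\{h>0\}=\{x_N<0\}$; in the first case it gets linearity of $h$ from global Lipschitz continuity plus Liouville's theorem and the slope constraint from the first variation, while in the second case it identifies $h=\sqrt{Q_2(x_0)}\,x_N^-$ by invoking minimality for the constant-weight Alt--Caffarelli functional together with unique continuation, and then rules this out by a first-variation computation. You instead extract the linear form $h=\Gamma(x\cdot\nu)_+$ up front from $1$-homogeneity plus odd reflection (making the Lipschitz/Liouville step unnecessary), pin down $\nu=\pm e_N$ from the classical free-boundary condition off the interface, and then read off both the elimination of $\nu=-e_N$ and the two-sided bound on $\Gamma^2$ from the two one-sided derivatives of $s\mapsto J_0(h_s,B_1)$ at $s=0$, the asymmetry $Q_1(x_0)<Q_2(x_0)$ making the resulting pair of inequalities contradictory in one orientation and exactly the claim in the other. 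Your version buys a uniform treatment of the two orientations of $\nu$ and avoids the unique-continuation detour; the paper's version does not need $1$-homogeneity of the blow-up, only the global Lipschitz bound. Both arguments are correct, and the one-sided variation bookkeeping you carry out explicitly is indeed the cleanest way to see where the two-sided bound on $\Gamma^2$ comes from.
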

\begin{proof}
Without loss of generality, we can assume that $x_0=0$, $A(x_0)=Id$, $Q_1(x_0)=1$ and $Q_2(x_0)=2$.
Since $h$ minimizes the functional $J_0$, it is a positive harmonic function in a half-space $T:=\{x\in\R^N\ :\ x\cdot\nu>0\}$ and 
\[
\vert \nabla h \vert^2 = 1 \text{ on } \partial T \cap \{ x_N > 0 \} \qquad \text{and} \qquad \vert \nabla h \vert^2 = 2 \text{ on } \partial T \cap \{ x_N < 0 \},
\]
so that the only admissible cases are
\[
T = \{ x_N < 0 \} \qquad \text{or} \qquad T = \{ x_N > 0 \}.
\]
Let us analyze the two cases separately.
\begin{itemize}
    \item[i)] Suppose that $T = \{ x_N < 0 \}$. If so, the function $h$ is also a minimizer of the Alt-Caffarelli's one-phase functional over $\R^N$, with multiplier $\Lambda = 2$. This implies (e.g. \cite[Lemma 6.11]{Velichkov:RegularityOnePhaseFreeBd}) that $\vert \nabla h \vert^2 = 2$ over $\{ x_N = 0 \}$. Hence, by unique continuation, $h(x) = \sqrt{2} x_N^-$. However, this is a contradiction. 
    Indeed, let us consider a family of diffeomorphisms of $B_1$ onto itself
    \[
    \Psi_t(x) \coloneqq x + t \varphi e_N, \text{ with } \varphi \in C^{\infty}_c(B_1),
    \]
    and define $h_t \coloneqq h \circ \Psi^{-1}_t$. Let us compute the first variation for $t = 0$ of the functional $J_0$ 
    \begin{align*}
    \begin{split}
    \frac{d}{dt} \int_{B_1} \vert \nabla h_t \vert^2 = -2 & \int_{B_1} \partial_N h \nabla \varphi \cdot \nabla h + \int_{B_1} \vert \nabla h \vert^2 \partial_N \varphi = 2 \int_{\{ x_N = 0 \}} \varphi, \\
    & \frac{d}{dt} \int_{\{ h_t > 0 \}} Q^2 = - \int_{\{ x_N = 0 \}} \varphi.
    \end{split}
    \end{align*}
    Hence, 
    \[
    \frac{d}{dt} \left[ \int_{B_1} \vert \nabla h_t \vert^2 + \int_{\{ h_t > 0 \}} Q^2 \right] = \int_{\{ x_N = 0 \}} \varphi, 
    \]
    which can be chosen strictly negative deforming $\{ h > 0\}$ in the direction of the smallest value for $Q(x)$, namely choosing $\varphi(x) \ge 0$ but not identically zero on $\{ x_N = 0 \}$.

    \item[ii)] Now suppose that $T = \{ x_N > 0 \}$. By using the first variation as above, we have that $1 \le \vert \nabla h \vert^2 \le 2$ on $\{ x_N = 0 \}$. Moreover, since $h$ is globally Lipschitz, by the Liouville's theorem, we have that $h$ is linear in $T$.\qedhere
\end{itemize}
\end{proof}

\noindent{\bf Step 3. Conclusion of the proof of Theorem \ref{t:main-main-2}.} 
The above Lemma \ref{l:regular-blow-ups} combined with Lemma \ref{lemma:ApplQDiscSolFlatAtContact} give that near regular points on the jump set $\partial E$ the solution $u$ to the variational problem \eqref{e:minimality-J-with-jump}, either the free boundary locally coincides with the discontinuity set, or it reduces to a problem with internal/external inclusion constraint. Thus, the $C^{1,\alpha}$ regularity of $\textrm{Reg}(u)$ follows from Theorem \ref{t:main-main}. The estimate on the dimension of the singular set follows by the Federer's dimension reduction principle (see for instance \cite{Weiss99:PartialRegularityFreeBd,Velichkov:RegularityOnePhaseFreeBd}). Thus, we are only left to show that $N^{\ast\ast}>2$ (that is, $\textrm{Sing}(u)=\emptyset$ in dimension two). This follows from the fact that any blow-up $h$ is a one-homogeneous non-negative function, which is harmonic in its positivity set $\{h>0\}$. In dimension two, this implies that $\{h>0\}$ is a half-space and that $h$ is a half-space solution (see Lemma \ref{l:regular-blow-ups}).

\section{A shape optimization problem with internal constraint}\label{section:applications}


For any $\Omega \subset \R^N$ quasi open set with finite measure, let $\lambda_1 \left( \Omega \right)$ denote the principal eigenvalue of the laplace operator in $\Omega$ with homogeneous dirichlet boundary conditions, namely
\[
\lambda_1 \left( \Omega \right) \coloneqq \inf_{u \in H^1_0(\Omega), u \neq 0} \frac{ \int_{\R^N} \vert \nabla u \vert^2 } { \int_{\R^N} u^2},
\]
which is actually a positive minimum. Let $D \subset \R^N$ be a bounded open set of class $C^{1, \alpha}$ for some $\alpha >0$, and consider the shape optimization problem
\begin{equation}\label{eqn:PbSpaheOptimizExtInclusion}
\lambda_a \coloneqq \inf\left\{ \lambda_1 \left( \Omega \right) : D \subseteq \Omega \subset \R^N, \Omega \text{ quasi open with } \vert \Omega \vert \le a \right\}, 
\end{equation}
where $a > 0$ is a fixed constant and represents a volume contraint. The existence of a minimizer $\Omega'$ for problem \eqref{eqn:PbSpaheOptimizExtInclusion} is proved in \cite{BucurButtazzoVelichkov:ShapeOptimizInternalContraint} via a $\gamma$-convergence and concentration-compactness argument. In particular, $\Omega'$ coincides quasi everywhere with the positivity set of (the precise representative of) the principal eigenfunction $u_{\Omega'}$ associated to $\lambda_1 \left( \Omega' \right)$.

Our aim is to study the regularity of $\partial \Omega'$ up to $\partial D$. As a consequence of Theorem \ref{thm:NonSharpRegularity}, we will show that the following result holds.
\begin{corollary}\label{crl:ShapeOptimizInclusionC1aRegularity}
    There exists a bounded open set $\Omega^{\ast}$ coinciding q.e. with $\Omega'$ such that, for any $\beta \in [0, \min\{\alpha,1/2\})$ the set $\Omega^{\ast}$ is of class $C^{1, \beta}$.
\end{corollary}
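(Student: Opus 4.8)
The plan is to treat $\partial\Omega'$ separately near $\partial D$ and away from it: away from $\partial D$ the classical one-phase regularity theory applies, while near $\partial D$ we recognize the eigenfunction as a solution of a free boundary problem of the form \eqref{eqn:MainPb} and invoke Theorem \ref{thm:NonSharpRegularity}. Write $u:=u_{\Omega'}$ for the principal eigenfunction, taken in its precise nonnegative quasi-continuous representative and normalized by $\int_{\R^N}u^2=1$, and set $\Omega^\ast:=\{u>0\}$. Since $-\Delta u=\lambda_1(\Omega')u\ge0$ in $\Omega^\ast$, the function $u$ is subharmonic, hence locally Lipschitz and nondegenerate (as in \cite{BucurButtazzoVelichkov:ShapeOptimizInternalContraint}); consequently $\Omega^\ast$ is open, bounded, and coincides q.e. with $\Omega'$. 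As $D$ is open with $D\subseteq\Omega'$, we get $D\subseteq\Omega^\ast$, so $\partial\Omega^\ast\cap\overline D\subseteq\partial D$; it therefore suffices to produce a local $C^{1,\beta}$ chart for $\partial\Omega^\ast$ around each of its points, the only nonclassical case being the points of $\partial\Omega^\ast\cap\partial D$.

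\emph{Penalization and reduction to \eqref{eqn:MainPb}.} By the standard penalization for shape functionals with a volume constraint (see \cite{BucurButtazzoVelichkov:ShapeOptimizInternalContraint}), there is $\Lambda>0$ such that $\Omega'$ also minimizes $\Omega\mapsto\lambda_1(\Omega)+\Lambda|\Omega|$ among quasi-open sets containing $D$; since $\int|\nabla v|^2/\int v^2\ge\lambda_1(\{v>0\})$ for every competitor, $u$ minimizes
\[
\mathcal J(v):=\frac{\int_{\R^N}|\nabla v|^2}{\int_{\R^N}v^2}+\Lambda\,\bigl|\{v>0\}\bigr|
\]
among all $0\le v\in H^1(\R^N)$ with $D\subseteq\{v>0\}$ q.e. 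Fix $x_0\in\partial\Omega^\ast\cap\partial D$. Since $\partial D$ is $C^{1,\alpha}$, a $C^{1,\alpha}$ diffeomorphism straightens $\partial D$ near $x_0$, sending $x_0$ to $0$ and $D$ to $\{x_N>0\}$; in the new coordinates $u$ minimizes a functional of the same type, with the Dirichlet energy replaced by $\int a^{ij}\partial_i v\,\partial_j v$ for a symmetric, uniformly elliptic matrix $A=(a^{ij})$ with $C^{0,\alpha}$ entries, and with volume term $\int_{\{v>0\}}Q^2$, where $Q^2=\Lambda J$ and $J>0$ is the $C^{0,\alpha}$ Jacobian of the change of variables; moreover the inclusion constraint becomes precisely $B_1^+\subseteq\{v>0\}$. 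Finally, writing $\int v^2=\int u^2+2\int u(v-u)+\int(v-u)^2$ turns the Rayleigh quotient into the fixed source $f:=-\lambda_1(\Omega')\,J\,u$, which lies in $L^\infty$ and is $\le0$ everywhere, up to a nonnegative quadratic remainder that only helps minimality; hence, modulo the elementary comparison of $\mathcal J$ with this Lagrangian, $u$ is a variational solution of \eqref{eqn:MainPb} in the sense of \eqref{eqn:VariationalSOlution} in a neighbourhood of the origin, with the above $A$, $Q$, $f$ — equivalently, a flat Lipschitz viscosity solution, as in the remark following Theorem \ref{thm:NonSharpRegularity}, using that the blow-ups of $u$ at $x_0$ are half-plane solutions by Lemma \ref{lemma:linearBehavior} and Remark \ref{rmk:BoundBelowfNotPositive}.

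\emph{Conclusion.} Theorem \ref{thm:NonSharpRegularity} then gives, for every $\beta\in[0,\min\{\alpha,1/2\})$, that $\partial\Omega^\ast$ is a $C^{1,\beta}$ manifold near $x_0$; transporting back by the $C^{1,\alpha}$ diffeomorphism preserves this since $\beta\le\alpha$. At a point $x_0\in\partial\Omega^\ast\setminus\overline D$ the inclusion constraint is inactive, so in a small ball $u$ is a local minimizer of the one-phase Alt--Caffarelli functional with bounded right-hand side $-\lambda_1(\Omega')u$, and the classical theory (\cite{AltCaffarelli:OnePhaseFreeBd}, \cite{Velichkov:RegularityOnePhaseFreeBd}) produces a $C^{1,\beta}$ chart there. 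Covering the compact set $\partial\Omega^\ast$ by finitely many such charts yields the global $C^{1,\beta}$ regularity of $\Omega^\ast$, which is the assertion.

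\emph{Main obstacle.} The crux is the reduction in the second paragraph: one must run the volume penalization to produce $\Lambda$, freeze the Rayleigh-quotient nonlinearity into the fixed, nonpositive source $f=-\lambda_1(\Omega')u$ while controlling the (harmless, volume-order) error, and verify that under the flattening of $\partial D$ the internal constraint $D\subseteq\Omega$ becomes exactly the competitor constraint $B_1^+\subseteq\{v>0\}$ of \eqref{eqn:VariationalSOlution}, together with the sign condition $f\le0$ on $B_1^+$ exploited in Theorem \ref{thm:NonSharpRegularity}. Granting this, the statement follows directly from the theory developed in Sections \ref{section:preliminaries}--\ref{section:Regularity}.
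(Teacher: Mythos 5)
Your overall strategy matches the paper's: after flattening $\partial D$, recognize $u_{\Omega'}$ as a solution of the constrained one-phase problem \eqref{eqn:MainPb} and invoke Theorem~\ref{thm:NonSharpRegularity}, while away from $\partial D$ the unconstrained Alt--Caffarelli theory applies. The reduction of the inclusion constraint $D\subseteq\Omega$ to the competitor constraint $B_1^+\subseteq\{v>0\}$ of \eqref{eqn:VariationalSOlution} is correct, as is the observation that the source $f=-\lambda_1(\Omega')Ju$ is bounded and nonpositive. The algebraic trick of absorbing the quadratic remainder $\lambda_1\int(v-u)^2\ge0$ is also fine \emph{once} one already has Lagrangian minimality (i.e.\ $J_\Lambda(u)\le J_\Lambda(v)$).

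However, there are two genuine gaps. First, you assert that ``there is $\Lambda>0$ such that $\Omega'$ also minimizes $\Omega\mapsto\lambda_1(\Omega)+\Lambda|\Omega|$'' as if it were standard; this global Lagrange-multiplier statement is not justified, and it is precisely the obstruction that the paper works around. The paper instead follows Brian\c{c}on--Lamboley: near each free boundary point one has a two-sided penalization with two multipliers $\mu^-\le\mu^+$, a one-sided inequality for each sign of the volume perturbation, and only the identity $\lim_{R\to0}\mu^\pm=\Lambda$ in the blow-up limit. This local, two-sided statement is what actually yields the Lipschitz estimate and feeds into the viscosity formulation with a single boundary gradient datum $Q\equiv\Lambda$; your single-multiplier shortcut needs this argument behind it. Second, your justification of the Lipschitz and nondegeneracy properties is wrong as stated: $-\Delta u=\lambda_1 u\ge0$ in $\{u>0\}$ says $u$ is \emph{superharmonic} there, not subharmonic, and in any case the relevant estimate requires subharmonicity (or a bounded-Laplacian estimate) of $u$ across the free boundary, which comes from the quasi-minimality established via the $J_{\mu^\pm}$ penalization, not from the interior PDE. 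Once those two steps are replaced by the Brian\c{c}on--Lamboley argument (as the paper does), the rest of your proof --- straightening $\partial D$, identifying the coefficients $A$, $Q$, $f$, and applying Theorem~\ref{thm:NonSharpRegularity} plus the classical interior theory --- is correct and coincides with the paper's.
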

Openness and boundedness are given by \cite[Remark 5.3, Proposition 5.12]{BucurButtazzoVelichkov:ShapeOptimizInternalContraint} respectively, while the regularity is a consequence of the identification of \eqref{eqn:PbSpaheOptimizExtInclusion} with a Bernoulli-type free boundary problem. For this, we follow \cite{BrianconLamboley:RegularityShapeFirstEigenLaplacian}.

Consider the functional
\[
J_{\mu}(u) \coloneqq \int_{\R^N} \vert \nabla u \vert^2 - \lambda_a \int_{\R^N} u^2 + \mu \vert \{ u > 0 \} \vert, \qquad u \in H^1(B_R), \mu \in \R^+.
\]
Then, proceeding similarly as in the proof of \cite[Theorem 1.5]{BrianconLamboley:RegularityShapeFirstEigenLaplacian} (only minor modifications are in order to take into account the inclusion constraint, there exist $R>0, h_0>0$ depending on $u_{\Omega^{\ast}}$, $a$ and $x_0 \in \partial \Omega^+\left( u_{\Omega^{\ast}} \right)$ such that, if $B_R$ is centered at $x_0$ and $\Omega^+\left( u_{\Omega^{\ast}} \right) \cap B_R$ does not coincide with $D \cap B_R$ , the optimal eigenfunction $u_{\Omega^{\ast}}$ satisfies
\begin{itemize}
    \item[i)] $J_{\mu^-}(u) \le J_{\mu^-}(v)$ for all $0 \le v \in H_1(\R^N)$ such that $u - v \in H^1_0(B_R)$, $D \subset \Omega^+(v)$ and $\vert \Omega^+(v) \vert \in [a-h_0, a], $\\
    \item[ii)] $J_{\mu^+}(u) \le J_{\mu^+}(v)$ for all $0 \le v \in H_1(\R^N)$ such that $u - v \in H^1_0(B_R)$, $D \subset \Omega^+(v)$ and $\vert \Omega^+(v) \vert \in [a, a+h_0]$,\\
    \item[iii)] $0 <\mu^- \le \mu^+ < +\infty$ and $\lim_{R \to 0} \mu^{\pm} = \Lambda$.
\end{itemize}
Exactly as in \cite[Section 3]{BrianconLamboley:RegularityShapeFirstEigenLaplacian}, this implies the Lipschitz regularity of $u_{\Omega^{\ast}}$ in $B_R$. The non degeneracy of $u_{\Omega^{\ast}}$ in $B_R \setminus D$ can be deduced exactly as in \cite[Lemma 3.1]{BrianconLamboley:RegularityShapeFirstEigenLaplacian}, and it can be extended to the whole $B_R$ similarly as done in the proof of Lemma \ref{lemma:LocalNonDegenerate}. 

We are only left to prove the regularity. This, however, follows at once from Theorem \ref{thm:NonSharpRegularity} after noticing that, by the above properties and locally (after a  flattening of the obstacle $D$), $u_{\Omega^{\ast}}$ is a viscosity solution of problem \ref{eqn:MainPb} with $Q(x) \equiv \Lambda$.

\subsection*{\bf Acknowledgments}
L.F. and B.V. are supported by the European Research Council (ERC), EU Horizon 2020 programme, through the project ERC VAREG - \it Variational approach to the regularity of the free boundaries \rm (No. 853404). L.F. is also a member of INDAM-GNAMPA.


\bibliographystyle{plain}
\bibliography{FreeBoundary_bib.bib}


\appendix

\medskip
\small
\begin{flushright}
\noindent 
\verb"lorenzo.ferreri@sns.it"\\
Classe di Scienze, Scuola Normale Superiore\\ 
Piazza dei Cavalieri 7, 56126 Pisa (Italy)
\end{flushright}

\begin{flushright}
\noindent 
\verb"bozhidar.velichkov@unipi.it"\\
Dipartimento di Matematica, Università di Pisa\\ 
Largo Bruno Pontecorvo 5, 56127 Pisa (Italy)
\end{flushright}

\end{document}